\documentclass{article}

\usepackage[english]{babel}
\usepackage[utf8]{inputenc}

\usepackage[letterpaper,top=2cm,bottom=2cm,left=3cm,right=3cm,marginparwidth=1.75cm]{geometry}

\usepackage{stmaryrd}
\usepackage{mathtools}
\usepackage{chngcntr}
\usepackage{indentfirst}
\usepackage{amsmath}
\usepackage{graphicx}
\usepackage{amssymb}
\usepackage{amsthm}
\usepackage[colorlinks=true, allcolors=blue]{hyperref}
\usepackage{mathrsfs}
\usepackage{tikz-cd}
\usetikzlibrary{arrows.meta}
\usepackage{tikz}
\usetikzlibrary{arrows.meta,patterns}
\usepackage{stmaryrd}

\usepackage[doi = false, isbn = false, url = false, style = alphabetic, maxnames = 100]{biblatex}
    \DeclareFieldFormat{postnote}{#1}
    \AtBeginDocument{%
       \def\MR#1{}
    }
\usepackage{cleveref}
\newtheorem{thm}{Theorem}[subsection]
\newtheorem{lemma}[thm]{Lemma}

\newtheorem{rmk}[thm]{Remark}
\newtheorem{prop}[thm]{Proposition}
\newtheorem{defn}[thm]{Definition}
\newtheorem{conj}[thm]{Conjecture}
\newtheorem{eg}[thm]{Example}
\newtheorem{fact}{Fact}

\crefname{lemma}{lemma}{lemmas}
\Crefname{lemma}{Lemma}{Lemmas}
\crefname{cor}{corollary}{corollaries}
\Crefname{cor}{Corollary}{Corollaries}
\crefname{rmk}{remark}{remarks}
\Crefname{rmk}{Remark}{Remarks}
\crefname{prop}{proposition}{propositions}
\Crefname{prop}{Proposition}{Propositions}
\crefname{defn}{definition}{definitions}
\Crefname{defn}{Definition}{Definitions}
\crefname{conj}{conjecture}{conjectures}
\Crefname{conj}{Conjecture}{Conjectures}
\crefname{eg}{example}{examples}
\Crefname{eg}{Example}{Examples}

\title{Bump--Friedberg type periods beyond the cuspidal spectrum}
\author{Shenghao Li}
\numberwithin{equation}{subsubsection}
\renewcommand{\theequation}{%
  \ifnum\value{subsection}>0 %
    \ifnum\value{subsubsection}>0 %
      \thesubsubsection.\arabic{equation}% Has subsubsection
    \else
      \thesubsection.\arabic{equation}% Has subsection only
    \fi
  \else
    \thesection.\arabic{equation}% Section only
  \fi
}
\numberwithin{thm}{subsubsection}
\renewcommand{\thethm}{%
  \ifnum\value{subsection}>0 %
    \ifnum\value{subsubsection}>0 %
      \thesubsubsection.\arabic{thm}% Has subsubsection
    \else
      \thesubsection.\arabic{thm}% Has subsection only
    \fi
  \else
    \thesection.\arabic{thm}% Section only
  \fi
  }

\newtheorem{claim}{Claim}

\newtheoremstyle{normalexample}
  {3pt} % Space above
  {3pt} % Space below  
  {\normalfont} % Body font (normal!)
  {} % Indent amount
  {\bfseries} % Header font
  {.} % Punctuation after header
  {.5em} % Space after header
  {} % Theorem head spec

\theoremstyle{normalexample}

\begin{document}
\maketitle

\begin{abstract}
    In this article, we study several Bump--Friedberg type periods beyond the cuspidal spectrum. We first consider the twisted Bump--Friedberg period on $\textnormal{GL}_{2n}$, as well as a variant on $\textnormal{GL}_1\times \textnormal{GL}_{2n}$. Under suitable regularity conditions on the cuspidal datum, these periods extend continuously to automorphic functions of uniform moderate growth. Such extensions are characterized by entire Whittaker-type zeta integrals. We then introduce a Bump--Friedberg type period on $\textnormal{GL}_{2n+1}$, integrating over the subgroup $\textnormal{SL}_{n+1}\times \textnormal{GL}_n$. For certain Eisenstein series, we evaluate this period as a finite sum of products of special values of $L$-functions and normalized local zeta integrals. Assuming the expected global Langlands correspondence, the sum is indexed by the fixed points of the extended $L$-parameter on the conjectural dual variety, and the resulting $L$-factors agree with the tangent space prediction of the global numerical conjecture of Ben-Zvi-Sakellaridis-Venkatesh.
\end{abstract}

\tableofcontents

\section{Introduction}

\subsection{Relative Langlands duality and the global numerical conjecture}
A fundamental problem in the theory of automorphic forms is to understand the relation between automorphic periods and special values of $L$-functions. Many classical integral representations can be viewed as examples of such a relation: a period on the automorphic side produces an $L$-function attached to the corresponding Langlands parameter.

The relative Langlands duality proposed by Ben-Zvi, Sakellaridis and Venkatesh in \cite{ben2024relative} is intended to seek relations between automorphic periods and $L$-functions. In their framework, suitable Hamiltonian varieties for a reductive group $G$ over a global function field are expected to admit dual Hamiltonian varieties for the Langlands dual group $\check{G}$. On the automorphic side, the original variety gives rise to a period integral, whereas on the spectral side, the dual variety provides the corresponding $L$-functions. Later in \cite{mao2026relative}, Mao, Wan and Zhang formulated the associated period conjecture over general global fields, including number fields, for hyperspherical Hamiltonian varieties.

A feature of the BZSV numerical conjecture that is especially relevant to the present article is the appearance of fixed points of a Langlands parameter on the dual variety. Following \cite{mao2026relative}, we recall its expected form in the polarized case:

\begin{conj}[\textbf{The global numerical conjecture}]
    Let $M=T^*X$ and $\check{M}=T^*\check{X}$ be a pair of dual polarized hyperspherical Hamiltonian varieties, where $X$ is a $G$-spherical variety and $\check{X}$ is the corresponding $\check{G}$-spherical variety. Let $\pi$ be a tempered automorphic representation of $G(\mathbb{A})$ with $L$-parameter $\phi$. Suppose that the fixed point set
    $$\check{X}^{\phi}=\{x_1,\dots,x_r\}$$
    is finite. Then for a suitably normalized spherical vector $f\in\pi$, one has
    $$\mathcal{P}_X(f)\sim \sum_{i=1}^rL(0,(T_{x_i}\check{X})^{\talloblong}).$$
    Here $\sim$ suppresses the global constants and local factors at ramified places occurring in a precise period formula.
\end{conj}

The exact BZSV global numerical conjecture is formulated under the assumption that $G$ is over a global function field and all data are everywhere unramified. For number fields, the right-hand side should also include ramified and archimedean local factors.

In the global numerical conjecture, the spectral side need not be a single $L$-value. When the hyperspherical Hamiltonian varieties are not strongly tempered, the corresponding action of $\phi$ on $\check{X}$ might have multiple fixed points. In this case, the conjecture predicts a finite sum, with each summand coming from a fixed point. This phenomenon was recently established by Lu and Xi for certain periods of Eisenstein series in \cite{lu2025periodsdetectingeisensteinseries}, whose dual variety is of the form $T^*(\check{G}/\check{L})$, where $\check{G}$ is a general linear group and $\check{L}$ is a Levi subgroup. 

The goal of the present article is to study an analogous phenomenon for Bump--Friedberg type periods.

\subsection{Bump--Friedberg type periods beyond the cuspidal spectrum}

Throughout the article, we fix a number field $F$. Let $\mathbb{A}:=\mathbb{A}_F$ denote the ring of ad\'eles of $F$.  For any integer $n\geq 1$, denote $\textnormal{GL}_n$ by $G_n$ and $\textnormal{SL}_n$ by $S_n$. Let $N_{n}$ denote the unipotent radical of the standard Borel subgroup of $G_n$. We also fix a non-trivial additive character $\psi:F\backslash\mathbb{A}\rightarrow \mathbb{C}^{\times}$. We use the same symbol to denote the character on $N_n(\mathbb{A})$ defined by
$$\psi(u)=\psi(u_{1,2}+u_{2,3}+\cdots+u_{n-1,n}).$$

The classical Bump--Friedberg integral and its twisted version are two-variable Rankin–Selberg integrals for general linear groups. Their unramified computations produce a product of a standard $L$-function and an exterior-square L-function. One can check \cite{BF90} and \cite{leslie2025unitary} for more details. 

The original theory is only valid for cuspidal automorphic forms for two reasons: the first is that the original period is only defined for cuspidal forms. The second is that the cuspidality provides the vanishing of proper constant terms, which guarantees that the extra terms that show up in the unfolding step will vanish. Neither property holds for general automorphic forms: their proper constant terms need not vanish, and functions of uniform moderate growth do not satisfy the decay estimates required to define the original period directly.

To extend the classical Bump--Friedberg period to general automorphic forms, we need a few more definitions (for more details, see \Cref{langlands}). Let $\mathcal{S}([G])$ denote the space of automorphic Schwartz functions on $[G]$ and let $\mathcal{T}([G])$ denote the space of smooth functions of uniform moderate growth on $[G]$. Both are endowed with the natural structure of topological vector spaces. Let $\mathfrak{X}(G)$ denote the set of cuspidal data of $G$. We have the following Langlands spectral decomposition:
$$L^2([G])=\widehat{\bigoplus}_{\chi\in \mathfrak{X}(G)}L^2_{\chi}([G]).$$
For a cuspidal datum $\chi\in \mathfrak{X}(G)$, define $\mathcal{S}_{\chi}([G]):=\mathcal{S}([G])\cap L^2_{\chi}([G])$, and let $\mathcal{T}_{\chi}([G])$ denote the orthogonal complement of $\widehat{\bigoplus}_{\chi'\neq \chi}\mathcal{S}_{\chi'}([G])$ in $\mathcal{T}([G])$. Endowed with the subspace topology inherited from $\mathcal{T}([G])$, the space $\mathcal{S}_{\chi}([G])$ is dense in $\mathcal{T}_{\chi}([G])$.

Our approach is to begin with the period on $\mathcal{S}([G])$, where the integral is absolutely convergent. We then restrict to $\mathcal{S}_{\chi}([G])$, where $\chi\in\mathfrak{X}(G)$ satisfies certain regularity conditions. These conditions force the extra terms appearing in the unfolding step to vanish. The period can consequently be unfolded into a Whittaker-type zeta integral. After analytic continuation, this zeta integral defines a continuous extension of the original period to automorphic forms of uniform moderate growth.

To be more precise, we use the first case we study (see \Cref{twisted BF}) as an example to illustrate our approach. Let $G=G_{2n}$, $H=G_n\times G_n$ and $\iota:H\rightarrow G$ be the interlacing embedding (see \Cref{notation section 3} for a precise definition). Let $\eta$ be a Hecke character. For any $f\in\mathcal{T}([G])$, we define the corresponding Whittaker function
$$W_f(g)=\int_{[N_{2n}]}f(ug)\psi^{-1}(u)\mathrm{d}u.$$
For $f\in \mathcal{S}([G])$, $\Phi\in \mathcal{S}(\mathbb{A}^n)$ and $s_1,s_2\in \mathbb{C}$, consider the period
$$\mathcal{P}_{\eta}(f,\Phi,s_1,s_2)=\int_{[G_n]\times [G_n]}f(\iota(x,y))\eta^{-1}(\textnormal{det}(x))\Theta(y,\Phi)|x|^{s_1}|y|^{s_2}\mathrm{d}x\mathrm{d}y$$
where
$$\Theta(g,\Phi)=\sum_{v\in F^n}\Phi(vg)|g|^{\frac{1}{2}},g\in [G_n]$$
is the normalized $\Theta$-series associated with $\Phi$. Under an explicit $(\eta,\Delta^*)$-regularity condition (see \Cref{regularity}) on the cuspidal datum, we prove that the restriction of the functional
$$f\mapsto \mathcal{P}_{\eta}(f,\Phi,0,0)$$
to $\mathcal{S}_{\chi}([G])$ extends continuously to $\mathcal{T}_{\chi}([G])$. The extension is characterized by the Bump--Friedberg integral $Z^{\textnormal{BF}}_{\eta}(f,\Phi,s_1,s_2)$, which is defined by the entire continuation of the integral
$$\int_{N_n(\mathbb{A})\backslash G_n(\mathbb{A})}\int_{N_n(\mathbb{A})\backslash G_n(\mathbb{A})}W_f(\iota(x,y))\eta^{-1}(\textnormal{det}(x))\Phi(e_ny)|x|^{s_1}|y|^{s_2+\frac{1}{2}}\mathrm{d}x\mathrm{d}y.$$
See \Cref{zeta integrals} for more details.

There are two more cases studied in this article. For the second case (see \Cref{augmented}), we set $G=G_1\times G_{2n}$, $H=G_n\times G_n$ and we embed $H$ into $G$ by $\nu(x,y)=(\textnormal{det}(x)^{-1},\iota(x,y))$. For the third case (see \Cref{new}), we set $G=G_{2n+1}$ and $H=S_{n+1}\times G_n$. We embed $H$ into $G$ by $\iota'$, which is a slight modification of the interlacing embedding (see \Cref{notation 5} for a precise definition). The second and third cases are treated by the same general strategy: we first define the periods on automorphic Schwartz spaces, impose suitable regularity conditions to obtain the required vanishing in the unfolding, and then use the analytically continued zeta integrals to extend the periods to automorphic functions of uniform moderate growth. The specific analytic reductions are different and will be explained in \Cref{1.4}.

\subsection{Main result}\label{1.3}

Our main result is for the latter case, where $G=G_{2n+1}$, $H=S_{n+1}\times G_n$ and we embed $H$ into $G$ by $\iota'$. For this case, the corresponding $G$-spherical variety can be written as $X=G\times ^H \textnormal{std}_{G_n}$. We define a character $\psi'$ on $N_{2n+1}(\mathbb{A})$ as follows:
$$\psi'(u)=\psi(u_{2,3}+\cdots+u_{2n,2n+1}).$$
Note that this is a degenerate character. For any $f\in \mathcal{T}([G])$, we define the corresponding degenerate Whittaker function
$$V_f(g)=\int_{[N_{2n+1}]}f(ug)\psi'^{-1}(u)\mathrm{d}u$$
for any $g\in G(\mathbb{A})$. For any $f\in \mathcal{S}([G])$ and $\Phi\in \mathcal{S}(\mathbb{A}^n)$, the period of $f$ is defined by 
$$\mathcal{P}(f,\Phi)=\int_{[S_{n+1}]\times [G_n]}f(\iota'(x,y))\Theta(y,\Phi)\mathrm{d}x\mathrm{d}y.$$
Following the approach described in the preceding subsection, for a $\Delta_1^*$-regular cuspidal datum $\chi\in \mathfrak{X}(G)$ (see \Cref{delta1}), we can prove that the functional 
$$f\mapsto \mathcal{P}(f,\Phi)$$
restricted on $\mathcal{S}_{\chi}([G])$ extends to a continuous functional $\mathcal{P}^*(\cdot,\Phi)$ on $\mathcal{T}_{\chi}([G])$. The extension is characterized by the value taken at the point $(0,0)$ of the zeta integral $Z(f,\Phi,\lambda,s_2)$ which is defined to be the entire continuation of the integral
$$\int_{N_{n+1}^1(\mathbb{A})\backslash S_{n+1}(\mathbb{A})}\int_{N_n(\mathbb{A})\backslash G_n(\mathbb{A})}V_f(\iota'(x,y))\Phi(e_ny)e^{\langle \lambda,H_{P_{(1,n)}^1}(x)\rangle}|y|^{s_2+\frac{1}{2}}\mathrm{d}x\mathrm{d}y$$
for any $f\in \mathcal{T}_{\chi}([G])$, where $N_{n+1}^1$ denotes the unipotent radical of the standard Borel subgroup of $S_{n+1}$, $P_{(1,n)}^1$ denotes the standard parabolic subgroup of $S_{n+1}$ with respect to the partition $(1,n)$ and $\lambda\in \mathfrak{a}_{P_{(1,n)}^1,\mathbb{C}}^*$. In other words, we have 
$$\mathcal{P}^*(f,\Phi)=Z(f,\Phi,0,0)$$
for any $f\in \mathcal{T}_{\chi}([G])$.

Now let $\pi$ be a unitary automorphic cuspidal representation of a standard Levi subgroup $M=\prod_{i=1}^lG_{n_i}$ of $G$. Let $P$ denote the standard parabolic subgroup of $G$ containing $M$. Further assume that the cuspidal datum $\chi$ of $(M,\pi)$ is $\Delta_1^*$-regular. Define $J_1(\pi):=\{1\leq i\leq l\;|\;n_i=1\}$ and $\Pi_i=\boxplus_{j\neq i}\pi_j$. For $i\in J_1(\pi)$, write $\pi_i=\eta_i$ for a unitary Hecke character $\eta_i$. Take a section $\varphi\in \textnormal{Ind}_{P(\mathbb{A})}^{G(\mathbb{A})}\pi$ and let $f=E(\varphi)$ be the corresponding Eisenstein series.
Thus, $E(\varphi)\in \mathcal{T}_{\chi}([G])$. Now we can state the main theorem of this article:
\begin{thm}[\Cref{main}, \textnormal{rough form}]
    Fix $\Phi\in \mathcal{S}(\mathbb{A}^n)$. Let $S$ be a sufficiently large finite set of places of $F$ including all archimedean places. After certain normalization, we have
    \begin{equation}\label{mainthm}
    \mathcal{P}^*(E(\varphi),\Phi)=(\Delta^{S,*}_{G_n\times G_n})^{-1}L(1,\pi,\hat{\mathfrak{n}}_P^-)^{-1}\sum_{i\in J_1(\pi)}\mathcal{C}_{i,S}(\varphi_S,\Phi_S)L^S(1,\eta_i^{-1}\otimes \Pi_i)L^S(1,\eta_i\otimes \Pi_i^{\vee})L^S(\frac{1}{2},\Pi_i,\eta_i^{-1}\otimes \wedge^2)
    \end{equation}
    where $\mathcal{C}_{i,S}(\varphi_S,\Phi_S)$ is a normalized local zeta integral at the places in $S$.
\end{thm}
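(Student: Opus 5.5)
We compute $\mathcal{P}^*(E(\varphi),\Phi)=Z(E(\varphi),\Phi,0,0)$ by evaluating the zeta integral in a right half-plane of $(\lambda,s_2)$ and then continuing analytically to $(0,0)$. We first compute the degenerate Whittaker function $V_{E(\varphi)}$. Since $\psi'$ is trivial on the $(1,2)$ root coordinate, we decompose $[N_{2n+1}]$ through the mirabolic-type parabolic $P_{(1,2n)}$. Unfolding $E(\varphi)$ along the Bruhat decomposition $P\backslash G/P_{(1,2n)}$ and using that $\pi$ is cuspidal, only the cells contribute where $\varphi$ is integrated against a generic character on the $G_{2n}$-block. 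The first row then has to be carried by a block of size one, that is, by some $\pi_i$ with $i\in J_1(\pi)$. The remaining blocks form the Eisenstein series on $G_{2n}$ induced from $\Pi_i$.

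This should give an expression
\[
V_{E(\varphi)}(g)=\sum_{i\in J_1(\pi)} \int_{\cdots} W_{M_i}(\varphi)(w_i u g)\,\mathrm{d}u ,
\]
where $W_{M_i}$ is a Jacquet-type integral: the Whittaker function of $\Pi_i$ tensored with $\eta_i$ on the isolated $G_1$ factor. The cells with $n_i\ge 2$ in the first position produce integrals of cusp forms over unipotent radicals and vanish. The $\Delta_1^*$-regularity of $\chi$ is used, exactly as in the extension result stated before \eqref{mainthm}, to kill the extra terms and to guarantee that the constant-term-type contributions to $Z(f,\Phi,\lambda,s_2)$ vanish, so that the sum over $J_1(\pi)$ is the whole answer.

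For each $i$, we insert the $i$-th term into the zeta integral. The integral over $N^1_{n+1}\backslash S_{n+1}$ with the weight $e^{\langle\lambda,H_{P^1_{(1,n)}}\rangle}$ is written via the Iwasawa decomposition along $P^1_{(1,n)}$. The determinant-one condition, combined with the $G_1$-coordinate where $\eta_i$ lives, turns that direction into an integral against $\eta_i$. The $\iota'$-embedding then restricts to an interlaced $G_n\times G_n$ inside $G_{2n}$. The resulting integral is a twisted Bump--Friedberg integral $Z^{\mathrm{BF}}_{\eta_i^{-1}}$ for the Whittaker function of $\mathrm{Ind}\,\Pi_i$, together with an extra integral over the unipotent directions of $P$ coming from $w_i$ and $u$. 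The extra integral produces local Rankin--Selberg type factors between $\eta_i$ and $\Pi_i$.

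Everything is now Eulerian. At unramified places $v\notin S$ we use the Shintani/Casselman--Shalika formula together with the known unramified computation of the twisted Bump--Friedberg integral (\cite{BF90}, \cite{leslie2025unitary}). This gives
\[
L_v(\tfrac12,\Pi_i,\eta_i^{-1}\otimes\wedge^2)\,L_v(1,\eta_i^{-1}\otimes\Pi_i)\,L_v(1,\eta_i\otimes\Pi_i^\vee),
\]
divided by the normalizing factor $\Delta^{*}_{G_n\times G_n,v}$ and by the $L(1,\pi,\hat{\mathfrak n}_P^-)$ factor that normalizes the Whittaker functional of the induced representation. At places in $S$ we define $\mathcal{C}_{i,S}$ as the ratio of the local integrals to these local $L$-factors. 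The identity holds in the region of absolute convergence, and both sides are meromorphic in $(\lambda,s_2)$. We conclude by continuing to $(0,0)$, using that $Z$ is entire and that the $L$-values on the right are finite by unitarity and regularity.

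The main obstacle will be the unfolding of $V_{E(\varphi)}$ and the identification of the $\lambda$-integral over $S_{n+1}$. Because $\psi'$ is degenerate, the Bruhat-cell bookkeeping is delicate. We also need to check that the non-$J_1$ cells vanish and that the determinant-one constraint produces exactly the twist by $\eta_i^{-1}$ and the two Rankin--Selberg factors, rather than a residual Eisenstein contribution. To control this, we would first treat the case $l=2$ with $M=G_1\times G_{2n}$, and then induct.
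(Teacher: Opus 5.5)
Most of your outline matches the paper's proof. Because $\psi'$ is trivial on the whole first row $N_Q$, $Q=P_{(1,2n)}$, one has $V_f=W_{f_Q}$. The Iwasawa decomposition along $P^1_{(1,n)}$ gives $Z(f,\Phi,\lambda,s_2)=\int_{K^1}Z^a((R(k)f)_Q,\Phi,s_\lambda+n+1,s_2)\,\mathrm{d}k$. The Langlands constant term formula plus cuspidality leaves exactly the terms $E^Q(M(w_i)R(k)\varphi)$, $i\in J_1(\pi)$, and each becomes a twisted Bump--Friedberg integral at $(s_1,s_2)=(\tfrac12,-\tfrac12)$. In the paper's convention the twist is $\eta_i$, not $\eta_i^{-1}$, since $\eta_i(\det(x)^{-1})=\eta_i^{-1}(\det x)$.

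Two smaller points. The restriction to $J_1(\pi)$ uses only cuspidality. $\Delta_1^*$-regularity enters only through the already established identity $\mathcal{P}^*=Z(\cdot,\Phi,0,0)$ and the entire continuation of each $Z^a$-term (each $\eta_i\boxtimes\chi^i$ is $\Delta_a^*$-regular). So there are no further constant-term contributions to kill, and with the constant term formula no induction on $l$ is needed.

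The genuine gap is the source of $L(1,\eta_i\otimes\Pi_i^{\vee})$ and of $L(1,\pi,\hat{\mathfrak{n}}^-_P)^{-1}$. Your ``extra integral over the unipotent directions coming from $w_i$'' is the intertwining integral $M(w_i)$. Since $E(\varphi)$ is taken at the unitary Eisenstein parameter, this integral does not converge there, and continuing in the zeta variables $(\lambda,s_2)$ does not help.

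Even formally, its unramified value is a Gindikin--Karpelevich ratio involving only the blocks $j<i$ that $w_i$ moves past: up to contragredient conventions, $\prod_{j<i}L_v(0,\pi_j\times\eta_i^{-1})/L_v(1,\pi_j\times\eta_i^{-1})$. Combined with $W_{\Pi_i,v}(1)=L_v(1,\pi^i,\hat{\mathfrak{n}}^-_{P^i})^{-1}$, this differs from your claimed local factor $L_v(1,\eta_i\otimes\Pi_i^{\vee})L_v(1,\pi,\hat{\mathfrak{n}}^-_P)^{-1}$ by $\prod_{j<i}L_v(0,\pi_j\times\eta_i^{-1})/L_v(1,\eta_i\times\pi_j^{\vee})$. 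The product of the $L_v(0,\cdot)$ over $v\notin S$ diverges, so your identity is not Eulerian place by place.

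What is missing is the global normalization the paper uses: $M(w_i)=n(w_i)N(w_i)$. Here $n(w_i)=L(1,w_i\pi,\hat{\mathfrak{n}}^-_{Q_{w_i}})/L(1,\pi,\hat{\mathfrak{n}}^-_P)$ is a ratio of \emph{complete} $L$-functions, which is where the functional equation enters, and $N(w_i)$ is factorizable and spherical-preserving. This is combined with $W^S_{E(\varphi^N_{i,k})}(1)=L^S(1,\pi^i,\hat{\mathfrak{n}}^-_{P^i})^{-1}$. Then:
\begin{itemize}
\item $L^S(1,w_i\pi,\hat{\mathfrak{n}}^-_{Q_{w_i}})/L^S(1,\pi^i,\hat{\mathfrak{n}}^-_{P^i})=L^S(1,\eta_i\otimes\Pi_i^{\vee})$, covering all $j\neq i$;
\item the complete factor $L(1,\pi,\hat{\mathfrak{n}}^-_P)^{-1}$ stays global, which is why it is not partial in the statement;
\item $L_S(1,w_i\pi,\hat{\mathfrak{n}}^-_{Q_{w_i}})$ is absorbed into $\mathcal{C}_{i,S}$.
\end{itemize}
Only the Bump--Friedberg part is computed locally, via the meromorphic identity for $Z^{\textnormal{BF}}_{\eta_i}$ evaluated at $(\tfrac12,-\tfrac12)$.
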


\begin{rmk}
    We make the following remarks:
    \begin{itemize}
        \item If there exists $1\leq i<j\leq l$ such that $\pi_i\simeq \pi_j$, then $L(s,\pi,\hat{\mathfrak{n}}_P^-)$ will have poles at $s=1$. In this case, the right-hand side of the equation will be understood as $0$. Meanwhile, $E(\varphi)=0$ under this assumption. So we still have $\mathcal{P}^*(E(\varphi),\Phi)=0$.
        \item The appearance of the global $L$-factor is due to different normalizations. Define
        $$W_{\varphi}^M(g)=\int_{[N_{2n+1}\cap M]}\varphi(ug)\psi^{-1}(u)\mathrm{d}u.$$
        For $S$ sufficiently large, we can write $W_{\varphi}^M=W_{\varphi,S}^MW_{\varphi}^{M,S}$ and $W_{E(\varphi)}=W_{E(\varphi),S}W_{E(\varphi)}^S$.

        In this article, we normalize such that $W_{\varphi}^{M,S}(1)=1$. Under this normalization, by \cite[\S 4]{shahidi2011certain}, $W_{E(\varphi)}^S(1)=L^S(1,\pi,\hat{\mathfrak{n}}_P^-)^{-1}$. Therefore, if we choose the normalization such that $W_{E(\varphi)}^S(1)=1$, then the global factor cancels. Moreover, if we further assume the data are everywhere unramified (in other words, $S=\varnothing$), then we can rewrite \Cref{mainthm} as
        $$\mathcal{P}^*(E(\varphi),\Phi)=(\Delta^*_{G_n\times G_n})^{-1}\sum_{i\in J_1(\pi)}L(1,\eta_i^{-1}\otimes \Pi_i)L(1,\eta_i\otimes \Pi_i^{\vee})L(\frac{1}{2},\Pi_i,\eta_i^{-1}\otimes \wedge^2),$$
        which is compatible with the fixed-point formula predicted in \cite{ben2024relative} for the candidate dual variety considered below.
    \end{itemize}
\end{rmk}

Motivated by \cite[Table 14, Line 19]{tang2026anomaly} and the central modifications discussed in Remark 5.1 in loc. cit., we consider the following candidate for the dual variety of $X$:
$$\check{X}=G_{2n+1}\times ^{G_1\times G_{2n}}(\textnormal{std}_{G_1}^{\vee}\boxtimes \wedge^2).$$
Assuming the hypothetical global Langlands correspondence, we show in \Cref{5.5} that the fixed points of the $L$-parameter on $\check{X}$ are naturally indexed by $J_1(\pi)$, and the tangent-space contribution at the fixed point indexed by $i$ is exactly
$$L(1,\eta_i^{-1}\otimes \Pi_i)L(1,\eta_i\otimes \Pi_i^{\vee})L(\frac{1}{2},\Pi_i,\eta_i^{-1}\otimes \wedge^2).$$
Thus, both the indexing set and the tangent space representations are compatible with the fixed-point prediction of the global numerical conjecture.

\subsection{Ideas of the proofs and organization of the article}\label{1.4}

We briefly describe the ideas of the proofs. For the twisted Bump--Friedberg period, set $G=G_{2n}$ and $H=G_n\times G_n$. We first establish the absolute convergence of the associated Whittaker-type zeta integral $Z^{\textnormal{BF}}_{\eta}(f,\Phi,s_1,s_2)$ in a suitable region in $\mathbb{C}^2$ for each $f\in\mathcal{T}([G])$. The unfolding is carried out through successive Fourier expansions. At each step, the nontrivial orbit produces the next zeta integral, while the trivial orbit gives a smaller automorphic period. The regularity conditions are chosen precisely so that all such smaller periods vanish. The Poisson summation then yields a functional equation, from which we obtain the entire continuation of the zeta integral $Z_{\eta}^{\textnormal{BF}}(f,\Phi,s_1,s_2)$ and the continuous extension of the period by setting $\mathcal{P}^*_{\eta}(f,\Phi)=Z_{\eta}^{\textnormal{BF}}(f,\Phi,0,0)$. 

The augmented period of \Cref{augmented} is reduced to the twisted Bump--Friedberg period by Fourier inversion in the $G_1$-variable. Set $G=G_1\times G_{2n}$ and $H=G_n\times G_n$. We first prove the absolute convergence of the following integral
$$Z^a(f,\Phi,s_1,s_2)=\int_{N_n(\mathbb{A})\backslash G_n(\mathbb{A})}\int_{N_n(\mathbb{A})\backslash G_n(\mathbb{A})}W_f(\nu(x,y))\Phi(e_ny)|x|^{s_1}|y|^{s_2+\frac{1}{2}}\mathrm{d}x\mathrm{d}y$$
in a suitable region for each $f\in\mathcal{T}([G])$. Using the same unfolding and functional equation techniques, we can obtain the entire continuation of $Z^a(f,\Phi,s_1,s_2)$ and the continuous extension of the corresponding periods.

Although this model is formally close to the twisted period, it is an essential intermediate step in the treatment of the new period on $G_{2n+1}$. To be more precise, let $G=G_{2n+1}$, $H=S_{n+1}\times G_n$ and $Q=P_{(1,2n)}$ denote the standard parabolic subgroup of $G$ with Levi component $G_1\times G_{2n}$. After applying the Iwasawa decomposition $S_{n+1}(\mathbb{A})=P_{(1,n)}^1(\mathbb{A})K^1$ to the $S_{n+1}$-variable, we get
$$Z(f,\Phi,\lambda,s_2)=\int_{K^1}Z^a((R(k)f)_Q,\Phi,s_{\lambda}+n+1,s_2)\mathrm{d}k.$$
Thus, its analytic continuation is reduced to the case of the augmented Bump--Friedberg period in \Cref{augmented}. 

Finally, for $f=E(\varphi)$ chosen in \Cref{1.3}, the Langlands constant term formula decomposes $(R(k)E(\varphi))_Q$ into terms indexed by Weyl group elements. Cuspidality implies that a term can be nonzero only when the corresponding Weyl group element moves a $G_1$-block to the upper left corner. The surviving terms are therefore indexed by the set $J_1(\pi)$. For each $i\in J_1(\pi)$, the corresponding augmented Bump--Friedberg period is reduced to a twisted Bump--Friedberg integral on $G_{2n}$, evaluated at $(s_1,s_2)=(1/2,-1/2)$. Its Euler decomposition gives
$$L^S(1,\eta_i^{-1}\otimes \Pi_i)L^S(\frac{1}{2},\Pi_i,\eta_i^{-1}\otimes \wedge^2)$$
while the normalization of the intertwining operator contributes
$$L^S(1,\eta_i\otimes \Pi_i^{\vee}).$$
Combining these factors gives the formula of \Cref{main}.

The article is arranged as follows: \Cref{section2} recalls the notation and analytic preliminaries. \Cref{twisted BF} establishes the unfolding, functional equation, analytic continuation, and Euler decomposition of the twisted Bump--Friedberg period. In \Cref{augmented}, we deal with the augmented Bump--Friedberg period on $G_1\times G_{2n}$. In \Cref{new}, this period appears after taking the constant term along the parabolic subgroup $Q$. \Cref{new} studies the new $S_{n+1}\times G_n$-period, evaluates it on $\Delta_1^*$-regular Eisenstein series, and compares the resulting formula with the fixed-point prediction for the candidate dual variety.

\; \\
\noindent \textit{Acknowledgments:} The author thanks Weixiao Lu for introducing this problem and many helpful discussions.

\section{Notation and preliminaries}\label{section2}
\subsection{General notation}
Throughout the article, unless otherwise specified, we fix a number field $F$. Let $\mathbb{A}:=\mathbb{A}_F$. We also fix a nontrivial additive character $\psi$ on $F\backslash \mathbb{A}$. For a place $v$ of $F$, we write $F_v$ for the completion of $F$ with respect to $v$. Let $|\cdot|_v$ denote the normalized absolute value on $F_v$. For any $a\in \mathbb{A}$, we use $|a|=\prod_v|a_v|_v$ to denote its ad\'elic norm. Let $S$ be a finite set of places. We write $F_S:=\prod_{v\in S}F_v$ and $F^S=\prod'_{v\notin S}F_v$.

For a positive integer $n$, we write $G_n:=\textnormal{GL}_n$ and $S_n:=\textnormal{SL}_n$. For any $s\in \mathbb{C}$ and $g\in G_n(\mathbb{A})$, we write $|g|^s$ for $|\textnormal{det}(g)|^s$. Let $E_{ij}$ denote the matrix which is equal to $1$ on the $(i,j)$-th position and $0$ on all other places. For a ring $R$, let $R^n$ denote the row vector of size $n$ with entries in $R$. Denote the row vector $(0,\dots,0,1)$ of $R^n$ by $e_n$. For any $c\in \mathbb{R}$, let $\mathcal{H}_{>c}$ denote the half-plane $\{s\in \mathbb{C}\;|\; \textnormal{Re}(s)>c\}$.

Let $f$ and $g$ be two positive functions on a set $X$. We write 
$$f\ll g$$
if there exists a constant $C>0$ such that $f(x)\leq Cg(x)$ for every $x\in X$. We write $f\approx g$ if $f\ll g$ and $g\ll f$. We also call such two functions equivalent functions. Further assume that $f(x),g(x)>1$ for every $x\in X$. We write $f\sim g$ if there exists a real number $r_0\geq1$ such that
$$f^{\frac{1}{r_0}}\ll g\ll f^{r_0}.$$

\subsection{Preliminaries}

Throughout this subsection, we adopt the notation and conventions in \cite[\S 2]{lu2025periodsdetectingeisensteinseries} and \cite[\S 2]{beuzart2022global}. We recall only the material needed in the sequel and refer to these works for further details.

\subsubsection{Algebraic groups over a global field}

Let $G$ be a connected linear algebraic group over a global field $F$. Let $X^*(G)$ be the group of characters of $G$ defined over $F$. Let $\mathfrak{a}_G^*=X^*(G)\otimes_{\mathbb{Z}}\mathbb{R}$ and $\mathfrak{a}_G=\textnormal{Hom}_{\mathbb{Z}}(X^*(G),\mathbb{R})$. We have a canonical pairing
$$\langle\cdot,\cdot\rangle:\mathfrak{a}_G^*\times \mathfrak{a}_G\rightarrow\mathbb{R}.$$
We also have a canonical homomorphism
$$H_G:G(\mathbb{A})\rightarrow \mathfrak{a}_G$$
defined by $\langle\eta,H_G(g)\rangle=\log|\eta(g)|$ for any $g\in G(\mathbb{A})$. Denote the kernel of $H_G$ by $G(\mathbb{A})^1$. We define $[G]:=G(F)\backslash G(\mathbb{A})$ and $[G]^1:=G(F)\backslash G(\mathbb{A})^1$.

From now on we assume that $G$ is reductive, and we keep this assumption throughout the whole section. Fix the Tamagawa measure $\mathrm{d}g$ on $G(\mathbb{A})$. We denote by $A_G^{\infty}$ the neutral component of real points of the maximal split central torus of $\textnormal{Res}_{F/\mathbb{Q}}G$.

Let $P=MN$ be a semi-standard parabolic subgroup of $G$. Set $[G]_P=M(F)N(\mathbb{A})\backslash G(\mathbb{A})$ and $A_P^{\infty}:=A_{M}^{\infty}$. Define $\mathfrak{a}_P^*:=X^*(P)\otimes_{\mathbb{Z}}\mathbb{R}$ and $\mathfrak{a}_P:=\textnormal{Hom}(X^*(P),\mathbb{R})$. Denote their complexification by $\mathfrak{a}_{P,\mathbb{C}}^*$ and $\mathfrak{a}_{P,\mathbb{C}}$. We extend the homomorphism $H_P:P(\mathbb{A})\rightarrow \mathfrak{a}_P$ into the Harish-Chandra map
$$H_P:G(\mathbb{A})\rightarrow\mathfrak{a}_P$$
in such a way that for every $g\in G(\mathbb{A})$ we have $H_P(g)=H_P(p)$ where $p\in P(\mathbb{A})$ is given by the Iwasawa decomposition $g\in pK$.

Fix an embedding $\iota:G\rightarrow G_N$ for some integer $N>0$. Using $\iota$, we define a height on $G(\mathbb{A})$ by
$$\|g\|=\prod_{v}\max_{1\leq i,j\leq N}(|\iota(g)_{i,j}|_v,|\iota(g^{-1})_{i,j}|_v).$$
Note that for another choice of embedding $\iota'$ yielding a height $\|\cdot \|'$, we have $\|\cdot\|\sim \|\cdot\|'$. By the definition of the height, we can see that $\|g\|\geq 1$, $\|g\|=\|g^{-1}\|$ and $\|g\|\|h\|\geq \|gh\|$ for any $g,h\in G(\mathbb{A})$. Let $P=MN\subset G$ be a standard parabolic subgroup. We set
$$\|x\|_P=\inf_{\gamma\in M(F)N(\mathbb{A})}\|\gamma x\|.$$
Therefore, $\|g\|_G=\inf_{\gamma\in G(F)}\|\gamma g\|_{G(\mathbb{A})}$ for any $g\in G(\mathbb{A})$.

Let $n$ be a positive integer. For $x\in\mathbb{A}^n$, we define
$$\|x\|_{\mathbb{A}^n}=\prod_v\max\{|x_{1,v}|_v,\dots,|x_{n,v}|_v,1\}.$$
Note that another choice of basis would yield equivalent functions.

We have the following lemma of absolute convergence:
\begin{lemma}[{\cite[Corollary 2.2.5]{lu2025periodsdetectingeisensteinseries}}]\label[lemma]{c>1}
    For any $c>1$, there exists $N_0$ such that for any $N\geq N_0$, the integral
    $$\int_{\mathbb{A}^{\times}}\|x\|_{\mathbb{A}}^{-N}|x|^s\mathrm{d}x$$
    is absolutely convergent for $1<\textnormal{Re}(s)<c$.
\end{lemma}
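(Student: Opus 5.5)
The integral factors over places, since both $\|x\|_{\mathbb{A}}$ and $|x|^s$ are products of local factors, so the plan is to reduce to local estimates. Writing $x=(x_v)_v\in\mathbb{A}^{\times}$ and $\mathrm{d}x=\prod_v\mathrm{d}x_v$ (up to the usual convergence factors at finitely many places, which do not affect convergence), we have
$$\int_{\mathbb{A}^{\times}}\|x\|_{\mathbb{A}}^{-N}|x|^{\sigma}\mathrm{d}x=\prod_v\int_{F_v^{\times}}\max\{|x_v|_v,1\}^{-N}|x_v|_v^{\sigma}\mathrm{d}x_v,\qquad \sigma=\textnormal{Re}(s).$$
Absolute convergence then amounts to two things. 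Each local integral must be finite, and the infinite product of local integrals must converge.

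For the local integrals, I would split $F_v^{\times}$ into $|x_v|_v\leq 1$ and $|x_v|_v>1$. On $|x_v|_v\leq 1$ the integrand is $|x_v|_v^{\sigma}$, which is integrable against the multiplicative Haar measure as soon as $\sigma>0$. On $|x_v|_v>1$ the integrand is $|x_v|_v^{\sigma-N}$, which is integrable once $N>\sigma$. So I take $N_0>c$, which makes every local integral finite uniformly for $1<\sigma<c$. At archimedean places this is an explicit beta-type integral in $t=|x_v|_v$ against $\mathrm{d}t/t$.

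For the product over non-archimedean places, the computation is explicit for almost all $v$, where $\mathrm{vol}(\mathcal{O}_v^{\times})=1$:
$$\sum_{k\geq 0}q_v^{-k\sigma}+\sum_{k\geq1}q_v^{-k(N-\sigma)}=(1-q_v^{-\sigma})^{-1}+\frac{q_v^{-(N-\sigma)}}{1-q_v^{-(N-\sigma)}}.$$
The first term is the local factor of $\zeta_F(\sigma)$, and the product of these converges for $\sigma>1$. This is the only real constraint, and it is why the lower bound $1<\textnormal{Re}(s)$ is needed. The second term is $1+O(q_v^{-(N-\sigma)})$, and since $N-\sigma>N_0-c$, taking $N_0\geq c+2$ makes $\prod_v(1+O(q_v^{-2}))$ converge. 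Combining the two, the full product is bounded by a constant times $\zeta_F(\sigma)$ times a convergent product, so it is finite for every $\sigma\in(1,c)$ and every $N\geq N_0$.

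The main point to get right is the bookkeeping of the Tamagawa measure on $\mathbb{A}^{\times}$, which carries convergence factors such as $\mathrm{Res}\,\zeta_F$ and $(1-q_v^{-1})^{-1}$. These change the integral only by a positive constant and so do not affect convergence. Apart from that, the lemma reduces to the convergence of the Dedekind zeta function for $\textnormal{Re}(s)>1$.
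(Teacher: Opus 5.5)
Your proof is correct. The paper gives no argument of its own for this lemma; it imports the statement from Lu--Xi's Corollary 2.2.5. So there is no in-paper route to compare against, and your Euler-product computation works as a self-contained verification of the cited fact.

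The essential steps are all in place:
\begin{enumerate}
\item The integrand is a nonnegative product of local factors that equal $1$ on $\mathcal{O}_v^{\times}$ for almost all $v$. Monotone convergence along the exhaustion $\mathbb{A}^{\times}=\bigcup_S\bigl(\prod_{v\in S}F_v^{\times}\times\prod_{v\notin S}\mathcal{O}_v^{\times}\bigr)$ therefore identifies the integral with the product of the local integrals.
\item The inequality $(1-q_v^{-\sigma})^{-1}+\epsilon_v\le(1-q_v^{-\sigma})^{-1}(1+\epsilon_v)$, with $\epsilon_v=O(q_v^{-(N-\sigma)})$, reduces everything to the convergence of $\zeta_F(\sigma)$ and of $\sum_v q_v^{-(N-\sigma)}$.
\end{enumerate}

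Two small refinements:
\begin{enumerate}
\item The Tamagawa normalization is disposed of most cleanly by noting that any two Haar measures on $\mathbb{A}^{\times}$ are proportional. Your phrase ``convergence factors at finitely many places'' is slightly off: the factors $(1-q_v^{-1})^{-1}$ are needed at all finite places to make the product measure exist. The conclusion is unaffected.
\item $N-\sigma>1$ already suffices, so $N_0=c+1$ works.
\end{enumerate}

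Compared with simply citing, your computation buys explicitness. It shows convergence on the full range $1<\textnormal{Re}(s)<N-1$ for each fixed $N$, and that the lower bound $\textnormal{Re}(s)>1$ is sharp. It also gives bounds that are uniform for $\textnormal{Re}(s)$ in compact subsets of $(1,c)$. That local uniformity is what is needed when estimates of this kind are used to deduce holomorphy of the zeta integrals. The citation, by contrast, keeps the paper short and places the estimate inside Lu--Xi's general framework of height estimates.
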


\subsubsection{Spaces of functions}

Let $P=MN$ be a standard parabolic subgroup of $G$. Let $\mathcal{S}([G]_P)$ be the space of Schwartz functions on $[G]_P$. For an integer $N>0$, let $L_{-N}^2([G]_P)^{\infty}$ denote the space of smooth functions $f$ on $[G]_P$ such that 
$$\int_{[G]_P}|f(g)|^2\|g\|^{-N}\mathrm{d}g<\infty$$
and let $\mathcal{T}_N([G]_P)$ consist of smooth functions $f$ on $[G]_P$ such that
$$\|f\|_{X,-N}:=\sup_{x\in [G]_P}|R(X)f(x)|\|x\|_P^{-N}<\infty$$
for any $X\in \mathcal{U}(\mathfrak{g}_{\infty})$. Let
$$\mathcal{T}([G]_P)=\bigcup_{N>0}\mathcal{T}_{N}([G]_P)=\bigcup_{N>0}L_{-N}^2([G]_P)^{\infty}$$
denote the space of functions of uniform moderate growth on $[G]_P$. Note that all the spaces of functions mentioned above are LF spaces. By \cite[\S 2.5.7]{beuzart2022global}, $\mathcal{S}([G]_P)$ is dense in $L_{-N}^2([G]_P)^{\infty}$ for any $N>0$. Let $Q$ be another standard parabolic subgroup of $G$ containing $P$. We have the following constant term map
$$\mathcal{T}([G]_Q)\ni f\mapsto f_P:=\left( g\mapsto\int_{[N_P]}f(ng)\mathrm{d}n\right)\in \mathcal{T}([G]_P)$$
and the pseudo-Eisenstein map
$$\mathcal{S}([G]_P)\ni \varphi\mapsto E_P^Q(\varphi,\cdot):=\left( g\mapsto\sum_{\gamma\in P(F)\backslash Q(F)}\varphi(\gamma g)\right)\in \mathcal{S}([G]_Q).$$

For a function $\Phi\in \mathcal{S}(\mathbb{A}^n)$, we associate the following $\Theta$-series:
$$\Theta(g,\Phi)=\sum_{v\in F^n}\Phi(vg)|g|^{\frac{1}{2}},g\in [G_n].$$
The convergence and growth of the $\Theta$-series are justified by \cite[Lemma 2.3.3]{lu2025periodsdetectingeisensteinseries}. By the Poisson summation formula, $\Theta$-series satisfies
\begin{equation}\label{theta series}
    \Theta(g,\Phi)=\Theta({^tg^{-1},\hat{\Phi}}).
\end{equation}

Next, we introduce some estimates on Fourier coefficients. Let $P=MN$ be a standard parabolic subgroup of $G$. Recall that we fix a nontrivial additive character $\psi$ on $F\backslash\mathbb{A}$. Let $V_P$ be the vector space of additive algebraic characters $N\rightarrow \mathbb{G}_a$. Let $l\in V_P(F)$ and set $\psi_l:=\psi\circ l_{\mathbb{A}}:[N]\rightarrow \mathbb{C}^{\times}$ where $l_{\mathbb{A}}$ denotes the homomorphism between ad\'elic points $N(\mathbb{A})\rightarrow \mathbb{A}$. For any $f\in \mathcal{T}([G])$, we set
$$f_{N,\psi_l}(g)=\int_{[N]}f(ug)\psi_l(u)^{-1}\mathrm{d}u$$
for any $g\in G(\mathbb{A})$. Denote the action of $M$ on $V_P$ induced by the adjoint action of $M$ on $N$ by $\textnormal{Ad}^*$. We have the following lemma:
\begin{lemma}[{\cite[Lemma 2.3.5]{lu2025periodsdetectingeisensteinseries}}]\label[lemma]{important lemma}
    Let $G(\mathbb{A})=P(\mathbb{A})K$ be the Iwasawa decomposition with respect to $P$. We have the following statement:
    \begin{itemize}
        \item[(1)] There exists $c>0$ such that for every $N_1,N_2\geq 0$, 
        $$f\mapsto \sup_{m\in M(\mathbb{A})}\sup_{k\in K}\|\textnormal{Ad}^*(m^{-1})l\|_{V_P(\mathbb{A})}^{N_1}\|m\|_M^{N_2}\delta_P(m)^{cN_2}|f_{N,\psi_l}(mk)|$$
        is a continuous semi-norm on $\mathcal{S}([G])$.
        \item[(2)] Let $N>0$. Then, for every $N_1\geq 0$, 
        $$f\mapsto \sup_{m\in M(\mathbb{A})}\sup_{k\in K}\|\textnormal{Ad}^*(m^{-1})l\|_{V_P(\mathbb{A})}^{N_1}\|m\|_M^{-N}|f_{N,\psi_l}(mk)|$$
        is a continuous semi-norm on $\mathcal{T}_N([G])$. 
    \end{itemize}
\end{lemma}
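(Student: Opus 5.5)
The plan is to reduce both parts of \Cref{important lemma} to the case $l$ a fixed $F$-rational character and then exploit the invariance of $f_{N,\psi_l}$ under the rational points of $M$. First, for $\gamma\in M(F)$ the change of variables $u\mapsto \gamma u\gamma^{-1}$ in $[N]$ gives
$$f_{N,\psi_l}(\gamma g)=f_{N,\psi_{\textnormal{Ad}^*(\gamma^{-1})l}}(g),$$
so it suffices to control $f_{N,\psi_l}(mk)$ with $m$ ranging over a Siegel-type domain of $M(\mathbb{A})$, keeping track of how $\textnormal{Ad}^*(m^{-1})l$ moves. The main tool for the weight $\|\textnormal{Ad}^*(m^{-1})l\|^{N_1}$ will be integration by parts in the unipotent variable. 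Write $N=N_0\supset N_1\supset\cdots$ for a filtration by normal subgroups with abelian quotients on which $l$ factors through $N/[N,N]$ (additive characters kill $[N,N]$). Then $f_{N,\psi_l}(mk)$ is a Fourier coefficient of the function $u\mapsto f_{[N,N]}(umk)$ on the compact torus $[N/[N,N]]$, and for $X$ in the Lie algebra of $N(F_\infty)$ we have
$$R(\textnormal{Ad}(k^{-1}m^{-1})X)f_{N,\psi_l}(mk)=d\psi_l(X)\,f_{N,\psi_l}(mk).$$
Since $d\psi_l(X)$ is, up to the fixed archimedean character, $l(X)$, and $\textnormal{Ad}(m^{-1})$ on $\mathfrak n$ is dual to $\textnormal{Ad}^*(m^{-1})$ on $V_P$, repeated application with a basis of $\mathfrak n$ yields the archimedean part of the polynomial factor $\|\textnormal{Ad}^*(m^{-1})l\|_\infty^{N_1}$, at the cost of applying $\mathcal{U}(\mathfrak g_\infty)$-elements whose coefficients are polynomial in $m_\infty$. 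At the finite places, I would use smoothness of $f$: $f$ is right invariant under an open compact $K_f'$, hence $f_{N,\psi_l}(mk)$ vanishes unless $\psi_l$ is trivial on $mk K_f'k^{-1}m^{-1}\cap N(\mathbb{A}_f)$. This bounds the finite part of $\textnormal{Ad}^*(m^{-1})l$ uniformly, so the finite factor of the norm is harmless.

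Next, the remaining weight in $m$. For (2), for $f\in\mathcal T_N([G])$ we have the trivial bound $|R(X)f(umk)|\ll\|umk\|_G^{N}$, and averaging over a compact fundamental domain for $[N]$ gives $|f_{N,\psi_l}(mk)|\ll \|m\|_M^{N}$ up to the polynomial factors coming from the coefficients in $m_\infty$ above. Those factors are absorbed by taking the norm with $\mathcal{U}(\mathfrak g_\infty)$ elements of fixed degree: I would check that $\textnormal{Ad}(m^{-1})X$, after being multiplied by the dual vector, is precisely the quantity being weighted. That is, I would rescale $X$ so that $d\psi_l(\textnormal{Ad}(m^{-1})X)$ has size $\|\textnormal{Ad}^*(m^{-1})l\|$ while $X$ stays bounded. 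This is the step I expect to be the main obstacle: one must choose $X$ depending on $m$, normalized so that the derivative lives in a bounded subset of $\mathfrak g_\infty$. Continuity of the seminorm follows because each step bounds it by finitely many $\|f\|_{X,-N}$.

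For (1), Schwartz functions decay faster than any power of $\|g\|_G$ after any derivative. The extra factor $\|m\|_M^{N_2}\delta_P(m)^{cN_2}$ is handled by the comparison $\|m\|_G\gg \|m\|_M^{a}\delta_P(m)^{b}$ on Siegel domains. This comparison holds because when $m$ leaves the Siegel set of $G$ in the positive-chamber direction, the height is controlled by $\delta_P$. Choosing $c$ from this comparison, and using Schwartz decay in place of moderate growth in the argument above, gives (1).
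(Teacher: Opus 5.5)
The paper gives no proof of this lemma; it is quoted from Lu--Xi. Your outline is the standard argument for such estimates: integration by parts along $\mathfrak{n}_\infty$ and smoothness at the finite places handle the weight $\|\textnormal{Ad}^*(m^{-1})l\|^{N_1}$, and pointwise bounds on $f$ handle the weight in $m$.

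\textbf{The step you flag as the main obstacle is not one.} For $Y$ in a fixed real basis of $\mathfrak{n}_\infty$, differentiate by $\textnormal{Ad}(k^{-1})Y$ rather than by $\textnormal{Ad}(k^{-1}m^{-1})X$. Since $mk\exp(t\,\textnormal{Ad}(k^{-1})Y)=\exp(t\,\textnormal{Ad}(m)Y)\,mk$, you get
\[
\bigl(R(\textnormal{Ad}(k^{-1})Y)f\bigr)_{N,\psi_l}(mk)=d\psi_l(\textnormal{Ad}(m)Y)\,f_{N,\psi_l}(mk),
\]
and $d\psi_l(\textnormal{Ad}(m)Y)=d\psi_\infty\bigl(d(\textnormal{Ad}^*(m^{-1})l)_\infty(Y)\bigr)$. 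As $Y$ runs over the basis, these numbers control $\|\textnormal{Ad}^*(m^{-1})l\|_\infty$. Moreover, $\textnormal{Ad}(k^{-1})Y$ stays in a compact subset of $\mathfrak{g}_\infty$ as $k$ runs over $K$. So iterating bounds $\|\textnormal{Ad}^*(m^{-1})l\|_\infty^{N_1}|f_{N,\psi_l}(mk)|$ by finitely many $|(R(X)f)_{N,\psi_l}(mk)|$, with $X$ from a fixed basis and no $m$-dependent coefficients. This matters. Your fallback of absorbing coefficients polynomial in $m_\infty$ cannot work in (2), where $N$ is fixed but $N_1$ is arbitrary.

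\textbf{The genuine gap is in (1).} You bound $|f_{N,\psi_l}(mk)|$ by $\int_{[N]}|f(umk)|\,\mathrm{d}u$. So you need a lower bound of the form $\|m\|_M\,\delta_P(m)^{c}\ll\|umk\|_G^{C}$, uniform in $u\in N(\mathbb{A})$ and $k\in K$. Since $\|umk\|_P\sim\|m\|_M$ and $\delta_P(umk)=\delta_P(m)$, this amounts to a global comparison $\|g\|_P\,\delta_P(g)^{c}\ll\|g\|_G^{C}$ on all of $G(\mathbb{A})$.

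The points $umk$ are in general not in a Siegel domain of $G$, and the regime that matters is when $m$ is not $G$-dominant, not the positive chamber. For example, take $\mathrm{PGL}_2$ and $m=\textnormal{diag}(t,1)$ with $t\to 0$. The horocycle $[N]m$ meets a fixed compact subset of $[G]$, so the decay of $f$ gives nothing, while $\|m\|_M$ grows like $t^{-1}$. Only the factor $\delta_P(m)^{c}=t^{c}$ can compensate.

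Your justification (``when $m$ leaves the Siegel set of $G$ in the positive-chamber direction, the height is controlled by $\delta_P$'') covers only the easy case $\delta_P(m)\gg 1$. The inequality is true as a reduction-theory fact. For $\textnormal{GL}_n$ one can see it by comparing the successive minima of the lattice $F^n g$ with those of the sublattice and quotient lattice attached to $P$, whose covolumes are powers of $\delta_P(g)$. But it must be proved or cited, since it is the actual content of (1) and the source of the constant $c$.
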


\subsubsection{Automorphic Forms and Eisenstein series}

Let $P=M_PN_P$ be a standard parabolic subgroup of $G$. An automorphic form on $[G]_P$ is, by definition, a $Z(\mathfrak{g}_{\infty})$-finite function in $\mathcal{T}([G]_P)$. We denote by $\mathcal{A}_P(G)$ the set of automorphic forms on $[G]_P$. Let $\mathcal{A}_{P,\textnormal{cusp}}(G)$ denote the subspace of $\mathcal{A}_P(G)$ consisting of cuspidal automorphic forms. When $P=G$, we will omit it from the subscript.

A cuspidal automorphic representation (or simply cuspidal representation) of $G$ is defined to be a topologically irreducible subrepresentation $\pi\subset \mathcal{A}_{\textnormal{cusp}}(G)$ of $G(\mathbb{A})$. Note that a cuspidal representation $\pi$ is unitary if and only if its central character is unitary.

Now let $\pi$ be a cuspidal representation of $M_P(\mathbb{A})$. Let $\mathcal{A}_{\pi,\textnormal{cusp}}(M_P)$ denote the sum of all cuspidal representations of $M_P(\mathbb{A})$ that are isomorphic to $\pi$. Let $\Pi=\textnormal{Ind}_{P(\mathbb{A})}^{G(\mathbb{A})}\pi$ be the normalized smooth induction of $\pi$ that we identify with the space of forms $\varphi\in \mathcal{A}_P(G)$ such that
$$m\in [M_P]\mapsto \delta_P^{-1/2}(m)\varphi(mg)$$
belongs to $\pi$ for any $g\in G(\mathbb{A})$.

For $\varphi\in \Pi$ and $\lambda\in \mathfrak{a}_{P,\mathbb{C}}^*$, we define the Eisenstein series as
$$E(g,\varphi,\lambda)=\sum_{\gamma\in P(F)\backslash G(F)}\exp(\langle\lambda,H_P(\gamma g)\rangle)\varphi(\gamma g).$$
The above expression is absolutely convergent when $\textnormal{Re}(\lambda)$ lies in some cones and has meromorphic continuation by \cite{langlands2006functional}, \cite{bernstein2024meromorphic} and \cite{lapid2008remark}.

\subsubsection{Cuspidal data and Langlands decomposition}\label{langlands}

Let $\underline{\mathfrak{X}}(G)$ denote the set of pairs $(M_P,\pi)$, where $M_P$ is a standard Levi component of a standard parabolic subgroup $P\subset G$ and $\pi$ is a cuspidal representation of $M_P(\mathbb{A})$ with central character trivial on $A_P^{\infty}$. Two elements $(M_P,\pi)$ and $(M_Q,\pi')$ are called equivalent if there exists $g\in G(F)$ such that $gM_Pg^{-1}=M_Q$ and $g\pi g^{-1}=\pi'$. Let $\mathfrak{X}(G)$ denote the equivalence class of $\underline{\mathfrak{X}}(G)$, and an element $\chi\in \mathfrak{X}(G)$ will be called a cuspidal datum. Sometimes we write $\chi=(M,\pi)$, which means that we pick a representative element $(M,\pi)$ in the equivalence class. Then for $\chi=(M,\pi)$, we denote by $\chi^{\vee}$ the cuspidal datum associated to $(M,\pi^{\vee})$, where $\pi^{\vee}$ stands for the complex conjugate of $\pi$.

For a standard parabolic subgroup $P\subset G$, there exists a natural map $\underline{\mathfrak{X}}(M_P)\rightarrow \underline{\mathfrak{X}}(G)$. It induces a map $\mathfrak{X}(M_P)\rightarrow \mathfrak{X}(G)$ with finite fiber. For each element $\chi\in \mathfrak{X}(G)$, denote its inverse image in $\mathfrak{X}(M_P)$ by $\chi^{M_P}$.

For any $\chi\in\mathfrak{X}(G)$, and $P=M_PN_P$ a standard parabolic subgroup of $G$, we write $\mathfrak{O}_{\chi}^P\subset \mathcal{S}([G]_P)$ the set of pseudo-Eisenstein series with respect to $\chi$ (see \cite[\S 2.9]{beuzart2022global} for details). Let $L_{\chi}^2([G]_P)$ denote the closure of $\mathfrak{O}_{\chi}^P$ in $L^2([G]_P)$, then we have the Langlands decomposition:
\begin{equation}
    L^2([G]_P)=\widehat{\bigoplus}_{\chi\in\mathfrak{X}(G)}L^2_{\chi}([G]_P),
\end{equation}
where the decomposition is orthogonal. For a subset $\mathfrak{X}\subset \mathfrak{X}(G)$, we write $L^2_{\mathfrak{X}}([G]_P):=\hat{\oplus}_{\chi\in \mathfrak{X}}L^2_{\chi}([G]_P)$. Then we define
\begin{equation}
    \mathcal{S}_{\mathfrak{X}}([G]_P):=L^2_{\mathfrak{X}}([G]_P)\cap \mathcal{S}([G]_P).
\end{equation}
Note that $ \mathcal{S}_{\mathfrak{X}}([G]_P)$ is a closed subspace of $\mathcal{S}([G]_P)$, since it is the orthogonal complement of $\cup_{\chi\notin \mathfrak{X}}\mathfrak{O}_{\chi}^P$ in $\mathcal{S}([G]_P)$.

We define $\mathcal{T}_{\mathfrak{X}}([G]_P)$ (resp. $L^2_{-N,\mathfrak{X}}([G]_P)^{\infty}$) be the orthogonal complement of $\mathcal{S}_{\mathfrak{X}^c}([G]_P)$ in $\mathcal{T}([G]_P)$ (resp. $L^2_{-N}([G]_P)^{\infty}$), where $\mathfrak{X}^c$ denotes the complement of $\mathfrak{X}$ in $\mathfrak{X}(G)$.

\section{Twisted Bump--Friedberg period}\label{twisted BF}

In this section, we mainly discuss the twisted Bump--Friedberg period of a general automorphic form, not necessarily cuspidal.

\subsection{Statements of the main results}
\subsubsection{Notations}\label{notation section 3}
For any positive integer $n$, let $G=G_{2n}$ and $H=G_n\times G_n$. We define an embedding $\iota:H\rightarrow G$ as follows: put the first $G_n$ into all positions with odd index, and put the second $G_n$ into all positions with even index. The following is an example:

\begin{eg}
    When $n=3$, the embedding of $H$ into $G$ is as follows:
    $$\begin{pmatrix}
        * &  & * &  & *&  \\
         & \circ & & \circ & & \circ \\
         * &  & * &  & *& \\
         & \circ & & \circ & & \circ \\
         * &  & * &  & *& \\
         & \circ & & \circ & & \circ \\
    \end{pmatrix}.$$
\end{eg}

Similarly, for $G=G_{2n+1}$ and $H=G_{n+1}\times G_n$, we can also define an embedding from $H$ to $G$, which is also called by $\iota$, by putting the first $G_{n+1}$ factor into all positions with odd index, and putting the second $G_n$ factor into all positions with even index. The following is an example:

\begin{eg}
    When $n=2$, the embedding of $H$ into $G$ is as follows:
    $$\begin{pmatrix}
        * &  & * &  & *\\
         & \circ & & \circ &\\
         * &  & * &  & *\\
         & \circ & & \circ &\\
         * &  & * &  & *\\
    \end{pmatrix}.$$
\end{eg}
We denote both embedding by the same symbol $\iota$. For a matrix with even size, we mean the first embedding; and for a matrix with odd size, we mean the second embedding.

Throughout this section, we fix a positive integer $n$, and let $G=G_{2n}$, $H=G_n\times G_n$. We embed $H$ into $G$ by $\iota$. Let $\mathcal{P}_r$ denote the standard mirabolic subgroup of $G_r$, and if $r\leq n$, we embed $\mathcal{P}_r$ into $G_n$ by putting it into the upper left corner.

We also introduce the following definitions:
\begin{itemize}
    \item For $0\leq r\leq n$, let $P_{r,n}$ denote the standard parabolic subgroup of $G_n$ with Levi component $G_r\times (G_1)^{n-r}$ and let $N_{r,n}$ denote the unipotent radical of $P_{r,n}$. Let $N_n=N_{0,n}$ be the upper triangular maximal unipotent subgroup of $G_n$.
    \item We also write $\psi$ for the character on $N_n(\mathbb{A})$ defined by $\psi(u)=\psi(u_{1,2}+\dots +u_{n-1,n})$. The character induced by $\psi$ on the subgroup $N_{r,n}$ will also be denoted by $\psi$. 
\end{itemize}

We also fix a character $\eta$ on $[G_1]$. We have the following fact:
\begin{fact}\label{fact 1}
    There exists a real number a such that $|\eta^{-1}(g)|=|g|^a$ for all $g\in [G_1]$
\end{fact}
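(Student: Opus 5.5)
The plan is to reduce the statement to the structure of Hecke characters of the idele class group. Here $\eta$ is a character on $[G_1]=F^\times\backslash\mathbb{A}^\times$, which we take to be continuous and quasi-characters in the usual sense. The claim is that $|\eta^{-1}(g)|=|g|^a$ for some $a\in\mathbb{R}$. Since $|\eta^{-1}(g)|=|\eta(g)|^{-1}$, it suffices to show $|\eta(g)|=|g|^{b}$ for some real $b$, and then set $a=-b$.

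\textbf{Step 1: restricted to the norm-one ideles.} Consider the continuous homomorphism $g\mapsto|\eta(g)|$ from $[G_1]$ to $\mathbb{R}_{>0}$. We use the standard decomposition
$$[G_1]=[G_1]^1\times A_{G_1}^{\infty},$$
where $[G_1]^1=F^\times\backslash\mathbb{A}^1$ is compact and $A_{G_1}^\infty\simeq\mathbb{R}_{>0}$ is embedded, for instance, diagonally at the archimedean places. The image of the compact group $[G_1]^1$ under $|\eta|$ is a compact subgroup of $\mathbb{R}_{>0}$. The only such subgroup is $\{1\}$, so $|\eta|$ is trivial on $[G_1]^1$.

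\textbf{Step 2: restricted to $A_{G_1}^\infty$.} The map $|\cdot|:A_{G_1}^\infty\to\mathbb{R}_{>0}$ is an isomorphism of Lie groups. On it, $|\eta|$ is a continuous homomorphism $\mathbb{R}_{>0}\to\mathbb{R}_{>0}$. Every such homomorphism has the form $t\mapsto t^{b}$ for a unique $b\in\mathbb{R}$: passing to logarithms, it becomes a continuous additive endomorphism of $\mathbb{R}$, and these are linear. Hence $|\eta(t)|=|t|^b$ for $t\in A_{G_1}^\infty$.

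\textbf{Step 3: conclusion.} Write an arbitrary $g=g_1t$ with $g_1\in[G_1]^1$ and $t\in A_{G_1}^\infty$. Then $|g|=|t|$, and by Steps 1 and 2, $|\eta(g)|=|\eta(t)|=|t|^b=|g|^b$. Taking $a=-b$ gives the statement.

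There is no real obstacle here. The only points that need attention are the compactness of $F^\times\backslash\mathbb{A}^1$, which is the classical finiteness theorem for the idele class group and is assumed, and the continuity of $\eta$, which is implicit in calling it a character on $[G_1]$.
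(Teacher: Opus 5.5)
Your proof is correct. The paper states this as a Fact without proof, and your argument is the standard justification it implicitly relies on: split $[G_1]=[G_1]^1\times A_{G_1}^\infty$ using the product formula and the compactness of $F^\times\backslash\mathbb{A}^1$, then classify continuous homomorphisms $\mathbb{R}_{>0}\to\mathbb{R}_{>0}$.
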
.

\subsubsection{The period}\label{twisted BF period}

We define a bilinear functional $\mathcal{S}([G])\times\mathcal{S}(\mathbb{A}^n)\times \mathbb{C}^2 \mapsto \mathbb{C}$ by
$$\mathcal{P}_{\eta}(f,\Phi,s_1,s_2)=\int_{[G_n]\times [G_n]}f(\iota(x,y))\eta^{-1}(\textnormal{det}(x))\Theta(y,\Phi)|x|^{s_1}|y|^{s_2}\mathrm{d}x\mathrm{d}y.$$

By \cite[Lemma 2.3.3]{lu2025periodsdetectingeisensteinseries} and \cite[Proposition A.1.1]{Beuzart-Plessis_2021}, together with \Cref{fact 1}, the integral defining $\mathcal{P}_{\eta}$ is absolutely convergent and defines a continuous bilinear functional on $\mathcal{S}([G])\times\mathcal{S}(\mathbb{A}^n)$ for any $s_1,s_2\in \mathbb{C}$. Moreover, for any $f\in \mathcal{S}([G])$ and $\Phi\in \mathcal{S}(\mathbb{A}^n)$, $\mathcal{P}_{\eta}(f,\Phi,s_1,s_2)$ defines an entire function on $\mathbb{C}^2$ which is bounded on vertical strips.

\subsubsection{Zeta integrals}\label{zeta integrals}

For $f\in \mathcal{T}([G])$ and $0\leq r\leq 2n$, we define the corresponding Fourier coefficient
$$f_{N_{r,2n,\psi}}(g)=\int_{[N_{r,2n}]}f(ug)\psi^{-1}(u)\mathrm{d}u.$$
We associate its Whittaker function 
$$W_f(g)=f_{N_{2n},\psi}(g)=\int_{[N_{2n}]}f(ug)\psi^{-1}(u)\mathrm{d}u.$$

Let $f\in \mathcal{T}([G])$, $\Phi\in \mathcal{S}(\mathbb{A}^n)$ and $s_1,s_2\in \mathbb{C}$. Define the following integral:
$$Z^{\textnormal{BF}}_{\eta}(f,\Phi,s_1,s_2)=\int_{N_n(\mathbb{A})\backslash G_n(\mathbb{A})}\int_{N_n(\mathbb{A})\backslash G_n(\mathbb{A})}W_f(\iota(x,y))\eta^{-1}(\textnormal{det}(x))\Phi(e_ny)|x|^{s_1}|y|^{s_2+\frac{1}{2}}\mathrm{d}x\mathrm{d}y.$$
The following lemma will provide the absolute convergence of the above expression of integral for $\textnormal{Re}(s_1)$ and $\textnormal{Re}(s_2)$ large enough.

\begin{lemma}\label[lemma]{absolute convergence 1}
    For any $N\in \mathbb{N}$, there exists $c_N>0$ such that
    \begin{itemize}
        \item[(1)] For any $f\in \mathcal{T}_N([G])$ and $\Phi\in \mathcal{S}(\mathbb{A}^n)$, the expression defining $Z^{\textnormal{BF}}_{\eta}(f,\Phi,s_1,s_2)$ converges absolutely for $s_1,s_2\in \mathcal{H}_{>c_N}$, and defines a holomorphic function on $\mathcal{H}_{>c_N}\times \mathcal{H}_{>c_N}$.
        \item[(2)] The functional $f\mapsto Z^{\textnormal{BF}}_{\eta}(f,\Phi,s_1,s_2)$ is continuous on $\mathcal{T}_N([G])$ for any $s_1,s_2\in \mathcal{H}_{>c_N}$ and $\Phi\in \mathcal{S}(\mathbb{A}^n)$.
    \end{itemize}
\end{lemma}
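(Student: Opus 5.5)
Since $W_f$, $\eta^{-1}\circ\det$ and $\Phi(e_n\cdot)|\cdot|^{s_2+1/2}$ are left $N_n(\mathbb{A})\times N_n(\mathbb{A})$-equivariant in a compatible way, I reduce to the diagonal torus via the Iwasawa decomposition $G_n(\mathbb{A})=N_n(\mathbb{A})T_n(\mathbb{A})K_n$ in both variables. Write $x=t_1k_1$, $y=t_2k_2$ with $t_1=\textnormal{diag}(a_1,\dots,a_n)$ and $t_2=\textnormal{diag}(b_1,\dots,b_n)$. The integral is then bounded by
$$\int_{K_n\times K_n}\int_{T_n(\mathbb{A})^2}|W_f(\iota(t_1,t_2)\iota(k_1,k_2))|\,|\Phi(b_nk_2)|\,|t_1|^{\textnormal{Re}(s_1)+a}|t_2|^{\textnormal{Re}(s_2)+\frac12}\delta_{B_n}^{-1}(t_1)\delta_{B_n}^{-1}(t_2)\,\mathrm{d}t\,\mathrm{d}k,$$
using \Cref{fact 1} to absorb $|\eta^{-1}|$ into a real power of $|t_1|$. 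Here $\iota(t_1,t_2)$ is the diagonal element $\textnormal{diag}(a_1,b_1,a_2,b_2,\dots,a_n,b_n)$ of $G_{2n}$, and $\iota(K_n\times K_n)\subset K_{2n}$.

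The main input is \Cref{important lemma}(2), applied with $P$ the Borel of $G_{2n}$ and $l$ the generic character defining $\psi$. It gives, for $f\in\mathcal{T}_N([G])$ and any $N_1$, a bound
$$|W_f(tk)|\ll \prod_{i=1}^{2n-1}\|t_i/t_{i+1}\|_{\mathbb{A}}^{-N_1}\,\|t\|_{T}^{N},$$
uniformly in $k\in K_{2n}$, with the implied constant a continuous seminorm of $f$. For the diagonal element above, the simple roots are $a_i/b_i$ and $b_i/a_{i+1}$, and they separate all variables except the determinant direction. I also need $\|t\|_{T_{2n}}\ll\prod_i\|t_i/t_{i+1}\|^{C}\cdot\|b_n\|_{\mathbb{A}}^{C}\|b_n^{-1}\|^{C}$ for some $C$ depending on $N$. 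This holds because the last coordinate together with the simple root coordinates generates all coordinates, and we have $\|uv\|\le\|u\|\|v\|$. The Schwartz function $\Phi$ gives rapid decay in $\|b_n\|_{\mathbb{A}}$.

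Next I change variables on $T_n(\mathbb{A})^2\cong(\mathbb{A}^\times)^{2n}$ to the $2n-1$ simple root coordinates $u_j$ together with $b_n$. Under this change, $|t_1|^{\textnormal{Re}(s_1)+a}|t_2|^{\textnormal{Re}(s_2)+1/2}\delta^{-1}\delta^{-1}$ becomes a product $\prod_j|u_j|^{\alpha_j}\cdot|b_n|^{\beta}$. Each $\alpha_j$ and $\beta$ is a positive combination of $\textnormal{Re}(s_1)$ and $\textnormal{Re}(s_2)$ plus a constant, and the positivity holds because every $a_i$ and $b_i$ equals $b_n$ times a product of the $u_j$ with nonnegative exponents. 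The integral therefore factors into one-dimensional integrals of the form $\int_{\mathbb{A}^\times}\|u\|^{-N_1+CN}|u|^{\alpha}\,\mathrm{d}u$, together with $\int\|b_n\|^{-M+CN}\|b_n^{-1}\|^{CN}|b_n|^{\beta}\,\mathrm{d}b_n$. For the first kind, \Cref{c>1} gives convergence once $\alpha>1$ and $N_1$ is chosen large. For the second kind, the small-$|b_n|$ region needs $\beta$ large relative to $CN$, and the large-$|b_n|$ region is handled by Schwartz decay. This determines $c_N$. The one real obstacle is making the growth exponent $\|t\|^N$ controllable near $|b_n|\to0$: one must check that $\|b_n^{-1}\|^{CN}$ is beaten by $|b_n|^{\beta}$. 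This is true because $\|b\|_{\mathbb{A}}\|b^{-1}\|_{\mathbb{A}}\ll$ a power of $\max(|b|,|b|^{-1})$ up to factors handled by \Cref{c>1} style integrals, so taking $\textnormal{Re}(s_2)$, and hence $\beta$, large suffices.

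Holomorphy on $\mathcal{H}_{>c_N}^2$ follows because the bound is locally uniform in $(s_1,s_2)$, so Morera's theorem and dominated convergence apply. Continuity in $f$ holds because the final bound is a constant times the seminorm from \Cref{important lemma}(2) on $\mathcal{T}_N([G])$.
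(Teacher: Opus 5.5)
Your argument is essentially the paper's: Iwasawa decomposition in both $G_n$-variables, \Cref{important lemma}(2) for the Borel subgroup of $G_{2n}$, Schwartz decay of $\Phi$ in $b_n$, and one-variable integrals controlled by \Cref{c>1} after taking $\textnormal{Re}(s_1),\textnormal{Re}(s_2)$ large. The difference is only in the bookkeeping. The paper stays in the coordinates $a_i,b_i$. It uses $\|xy\|_{\mathbb{A}}\le\|x\|_{\mathbb{A}}\|y\|_{\mathbb{A}}$ to convert the decay in $a_i/b_i$, $b_i/a_{i+1}$ and $b_n$ into $\prod_i\|a_i\|_{\mathbb{A}}^{-N_2}\|b_i\|_{\mathbb{A}}^{-N_2}$, so the exponents $\textnormal{Re}(s_1)+a+2i-n-1$ and $\textnormal{Re}(s_2)-\frac12+2i-n$ are visibly large. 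You instead move the characters and the growth factor to simple-root coordinates. Your observation $t_k=b_n\prod_{j\ge k}u_j$ correctly gives positivity of the new exponents; indeed $\alpha_j$ has coefficient $\lceil j/2\rceil\ge1$ on $\textnormal{Re}(s_1)$.

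There is, however, a misstep in your treatment of the $u_j$-integrals.

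\begin{itemize}
\item[(1)] The growth factor $\|t\|_T^N$ is two-sided: it is the height on $[T]$, comparable to a power of $\prod_k\max(|t_k|,|t_k|^{-1})$. So your bound $\|t\|_T\ll\prod_j\|u_j\|^C\cdots$ holds only with two-sided heights $\|u_j\|_{G_1}$ on the right. For a counterexample with one-sided norms, take $n=1$ and $t=\textnormal{diag}(u,1)$ with $u_\infty\to0$ and $u_v$ a unit elsewhere. Then $\|u\|_{\mathbb{A}}=1$ but $\|t\|_T\to\infty$.
\item[(2)] Hence the $u_j$-integrals are $\int_{\mathbb{A}^\times}\|u\|_{\mathbb{A}}^{-N_1}\|u\|_{G_1}^{CN}|u|^{\alpha_j}\,\mathrm{d}u$, not $\int\|u\|^{-N_1+CN}|u|^{\alpha_j}\,\mathrm{d}u$. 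As $|u|\to0$, the factor $\|u\|_{\mathbb{A}}^{-N_1}$ gives no decay, while $\|u\|_{G_1}^{CN}$ grows like $|u|^{-MCN}$, where $M$ is such that $\|x\|_{G_1}\ll\max(|x|^M,|x|^{-M})$.
\item[(3)] So $\alpha_j>1$ is not sufficient; you need $\alpha_j>1+MCN$, the same kind of condition you impose on $\beta$. The obstacle at $0$ is therefore not confined to $b_n$; it occurs for every simple-root coordinate.
\end{itemize}

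The repair is immediate from your positivity remark: choose $c_N$ so that all $\alpha_j$ and $\beta$ exceed $1+MCN$.

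Separately, your inequality $\|b\|_{\mathbb{A}}\|b^{-1}\|_{\mathbb{A}}\ll\max(|b|,|b|^{-1})^{M}$ is false pointwise on $\mathbb{A}^\times$. Over $\mathbb{Q}$, take $b_\infty=b_p=p^k$ and $b_v=1$ otherwise: the left side is $p^{2k}$ and the right side is $1$. What the paper uses instead is that the height $\|\cdot\|_{G_1}$ in \Cref{important lemma} is the height on $[G_1]$, and for that height $\|x\|_{G_1}\ll\max(|x|^M,|x|^{-M})$ does hold.
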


The proof of the above lemma will be given in \Cref{proof of absolute convergence}.

\subsubsection{$(\eta,\Delta^*)$-regular cuspidal datum}\label{regularity}

Let $\chi\in \mathfrak{X}(G)$ be a cuspidal datum of $G$. Write $\chi=(M,\pi)$, where $M=\prod_{i=1}^lG_{n_i}$ and $\pi=\pi_1\boxtimes \pi_2\boxtimes \dots \boxtimes \pi_l$, with each $\pi_i$ a cuspidal representation of $G_{n_i}$. 

\begin{defn}\label[defn]{definition}
We call $\chi$ $(\eta,\Delta^*)$-regular, if it satisfies the following conditions:
\begin{itemize}
    \item[(1)] For any $i\neq j$ and $s\in \mathbb{C}$, $\pi_i\otimes \eta^{-1}|\cdot|^{s}$ is not isomorphic to $\pi_j^{\vee}$;
    \item[(2)] If $n_i$ is even, the $L$-function $L(x,\pi_i,\wedge^2\otimes \eta^{-1}|\cdot|^s)$ is holomorphic at $x=1$ for any $s\in \mathbb{C}$.
    \item[(3)] For any $1\leq i\leq l$ such that $n_i=1$, $\eta|_{[G_1]^1}\neq \pi_i|_{[G_1]^1}$.
\end{itemize}
\end{defn}

Let $\mathfrak{X}_{(\eta,\Delta^*)}(G)\subset \mathfrak{X}(G)$ denote the set of $(\eta,\Delta^*)$-regular cuspidal datum. We write $\mathcal{S}_{(\eta,\Delta^*)}([G])$ (resp. $\mathcal{T}_{(\eta,\Delta^*)}([G])$) for $\mathcal{S}_{\mathfrak{X}_{(\eta,\Delta^*)}(G)}([G])$ (resp. $\mathcal{T}_{\mathfrak{X}_{(\eta,\Delta^*)}(G)}([G])$).

\begin{rmk}\label[rmk]{Remark 1}
    The notion of $(\eta,\Delta^*)$-regularity is not restricted to cuspidal data. More generally, let $M$ be a standard Levi subgroup of $G$ and $\pi$ a cuspidal representation on $M$. We may define $(\eta,\Delta^*)$-regularity for $\pi$ by the same conditions, without requiring its central character to be trivial on $A_M^{\infty}$.

    However, the notion is invariant under twists in the $A_M^{\infty}$-direction. More precisely, suppose that $\pi$ and $\pi'$ are two cuspidal representations of $M(\mathbb{A})$ such that 
    $$\pi'=\pi\otimes e^{\langle\lambda,H_M(\cdot)\rangle}$$
    for some $\lambda\in i\mathfrak{a}_{M}^*$, then $\pi$ is $(\eta,\Delta^*)$-regular if and only if $\pi'$ is $(\eta,\Delta^*)$-regular.

    Moreover, for any two Hecke characters $\eta,\eta_0$ such that  $\eta=\eta^0|\cdot|^{u_\eta}$ for some $u_{\eta}\in \mathbb{C}$, $(\eta,\Delta^*)$-regularity is equivalent to $(\eta_0,\Delta^*)$-regularity.
\end{rmk}

\subsubsection{Main results}

We state the main result of this section:

\begin{thm}\label[thm]{main result}
    Let $\chi$ be a $(\eta,\Delta^*)$-regular cuspidal datum. Fix $\Phi\in \mathcal{S}(\mathbb{A}^n)$. We have the following statements:
    \begin{itemize}
        \item[(1)] The continuous functional $\mathcal{P}_{\eta}(\cdot,\Phi,0,0)$ on $\mathcal{S}_{\chi}([G])$ extends by continuity to a continuous functional $\mathcal{P}^*_{\eta}(\cdot,\Phi)$ on $\mathcal{T}_{\chi}([G])$.
        \item[(2)] For every $f\in \mathcal{T}_{\chi}([G])$, the function $(s_1,s_2)\mapsto Z^{\textnormal{BF}}_{\eta}(f,\Phi,s_1,s_2)$, a priori defined on the region $\mathcal{H}_{>c_N}\times \mathcal{H}_{>c_N}$, extends to an entire function on $\mathbb{C}^2$. Moreover, we have
        $$\mathcal{P}^*_{\eta}(f,\Phi)=Z^{\textnormal{BF}}_{\eta}(f,\Phi,0,0).$$
    \end{itemize}
\end{thm}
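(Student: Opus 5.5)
The argument follows the Lu--Xi template. We first show that on $\mathcal{S}_{\chi}([G])$ the period unfolds to the zeta integral, $\mathcal{P}_{\eta}(f,\Phi,s_1,s_2)=Z^{\textnormal{BF}}_{\eta}(f,\Phi,s_1,s_2)$, for $\textnormal{Re}(s_1),\textnormal{Re}(s_2)$ large. We then obtain a functional equation and entire continuation of $Z^{\textnormal{BF}}_{\eta}$ on all of $\mathcal{T}_{\chi}([G])$. Finally, density of $\mathcal{S}_{\chi}([G])$ in $\mathcal{T}_{\chi}([G])$, together with continuity, gives the extension $\mathcal{P}^*_{\eta}(f,\Phi)=Z^{\textnormal{BF}}_{\eta}(f,\Phi,0,0)$.

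\textbf{Step 1: unfolding.} For $f\in\mathcal{S}_{\chi}([G])$ we unfold $\Theta(y,\Phi)$. The term $v=0$ contributes $\Phi(0)$ times the smaller period
\[
\int f(\iota(x,y))\eta^{-1}(\det x)|x|^{s_1}|y|^{s_2+1/2}.
\]
The nonzero terms give an integral over $\mathcal{P}_n(F)\backslash G_n(\mathbb{A})$ in $y$. Next we run successive Fourier expansions, as in Jacquet--Shalika and Bump--Friedberg, along the unipotent groups $N_{r,2n}$ intersected with the relevant mirabolics, alternating between the $x$- and $y$-variables. At each step the nontrivial $\mathcal{P}_r(F)$-orbit of characters produces the next partial Whittaker coefficient $f_{N_{r,2n},\psi}$. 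The trivial orbit produces a period of the constant term $f_{P}$ along a standard parabolic $P$ with Levi $G_a\times G_b$ (or a further degenerate piece), integrated against a Bump--Friedberg-type or Rankin--Selberg-type period of smaller size.

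These degenerate terms must vanish for $f\in\mathcal{S}_{\chi}([G])$. I would show that each one is a continuous functional that factors through $\mathcal{S}_{\chi^M}$ of the Levi and is $(M(F)\times\text{twisted})$-invariant in a way that forces the cuspidal support to satisfy one of three relations:
\begin{itemize}
\item[(a)] $\pi_i\otimes\eta^{-1}|\cdot|^s\simeq\pi_j^\vee$, arising from $G_a\times G_a$ Rankin--Selberg-type pieces;
\item[(b)] a pole of the twisted exterior square $L$-function, arising from Bump--Friedberg pieces on an even block;
\item[(c)] $\eta=\pi_i$ on $[G_1]^1$, arising from the $G_1$ pieces.
\end{itemize}
Definition \ref{definition} excludes exactly these, so all degenerate terms vanish. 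Concretely, the vanishing is proved by Mellin transform in the central variable combined with the Langlands decomposition of $L^2$ restricted to the Levi.

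The end result is $Z^{\textnormal{BF}}_{\eta}$ in the region where Lemma \ref{absolute convergence 1} applies. The interchange of summation and integration in this step is justified by Lemma \ref{important lemma}(1).

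\textbf{Step 2: functional equation and continuation.} Split the $y$-integral (or the central $|\det|$-variable) into pieces with $|y|\ge1$ and $|y|\le1$. On the $|y|\le 1$ piece, apply the Poisson summation formula \eqref{theta series} to the theta series, running the unfolding backwards at the level of the period. This expresses $Z^{\textnormal{BF}}_{\eta}(f,\Phi,s_1,s_2)$ as
\[
\bigl(\text{integral over }|y|\ge1\text{ for }(f,\Phi,s)\bigr)+\bigl(\text{integral over }|y|\ge1\text{ for }(\tilde f,\hat\Phi,\tilde s)\bigr),
\]
where $\tilde f(g)=f({}^tg^{-1})$ twisted by a Weyl element, and $\tilde s$ is the dual parameter. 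The boundary terms coming from $\Phi(0)$ and $\hat\Phi(0)$ are again degenerate periods, which vanish by regularity. Each truncated integral converges for all $(s_1,s_2)$ by Lemma \ref{important lemma}(2). The truncation in the $x$-variable is handled similarly with the $\eta$ twist, using Fact \ref{fact 1} and Lemma \ref{c>1}. This yields an entire continuation of $Z^{\textnormal{BF}}_{\eta}(f,\Phi,\cdot,\cdot)$ for $f\in\mathcal{T}_{\chi}([G])$, and the resulting functional is continuous on each $\mathcal{T}_N$.

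\textbf{Step 3: extension.} Since $f\mapsto Z^{\textnormal{BF}}_{\eta}(f,\Phi,0,0)$ is continuous on $\mathcal{T}_{\chi}([G])$, and since it agrees with $\mathcal{P}_{\eta}(f,\Phi,0,0)$ on $\mathcal{S}_{\chi}([G])$ by analytic continuation in $(s_1,s_2)$ (both sides are entire there), density of $\mathcal{S}_{\chi}([G])$ in $\mathcal{T}_{\chi}([G])$ gives both (1) and (2).

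The main obstacle is Step 1, specifically proving that every degenerate constant-term period vanishes on $\mathcal{S}_{\chi}$ from the regularity conditions alone. This vanishing is also what makes the continuation work, since it kills the would-be pole terms.
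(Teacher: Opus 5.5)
Your Step 2 takes a different route from the paper, and as set up it fails. The paper proves $Z^{\textnormal{BF}}_{\eta}(f,\Phi,s_1,s_2)=Z^{\textnormal{BF}}_{\eta^{-1}}(\tilde f,\hat\Phi,-s_1,-s_2)$ on $\mathcal{S}_{\chi}([G])$ and then continues abstractly. It works first in $u=s_1+s_2$ at fixed $v=s_1-s_2$ on $L^2_{-N,\chi}([G])^{\infty}$ via \cite[Corollary A.0.11.2]{beuzart2022global}, and then jointly via \cite[Lemma A.0.10.1]{beuzart2022global} and Hartogs. Your Hecke-type cutoff in $|\det y|$ does not give pieces that converge for all $(s_1,s_2)$. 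Write the torus parts of $x,y$ as $\textnormal{diag}(a_1,\dots,a_n)$ and $\textnormal{diag}(b_1,\dots,b_n)$. Then \Cref{important lemma}(2) and the decay of $\Phi(b_ne_nk)$ confine the integrand, up to rapidly decaying factors, to $|a_1|\lesssim|b_1|\lesssim|a_2|\lesssim\cdots\lesssim|a_n|\lesssim|b_n|\lesssim 1$. On this region, $|\det y|\ge 1$ bounds every coordinate except the direction $a_1/b_1\to 0$, where $W_f$ does not decay in general (e.g.\ for Eisenstein series). So your $|\det y|\ge1$ piece needs $\textnormal{Re}(s_1)$ large, while its dual partner is evaluated at $-s_1$.

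The cutoff has to be on $|\det x|$, or on the central variable $|\det\iota(x,y)|$ from your parenthetical. In the coordinates $t=(a_1/b_1,b_1/a_2,\dots,a_n/b_n,b_n)$ these equal $\prod_j|t_j|^{\lceil j/2\rceil}$ and $\prod_j|t_j|^{j}$, with all exponents positive. Bounding the indicator by a large power of them and applying \Cref{c>1} in each $t_j$ then gives convergence for all $(s_1,s_2)$ and continuity on $\mathcal{T}_N([G])$.

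The splitting identity itself can only be proved on $\mathcal{S}_{\chi}([G])$, since truncated periods do not exist on $\mathcal{T}_{\chi}([G])$. There, the cut-off unfolding needs the degenerate terms (including your $\Phi(0)$, $\hat\Phi(0)$ terms) to vanish against $\mathbf{1}_{|\det x|\ge 1}$. This follows from their vanishing for all $s_1$ by injectivity of the Laplace transform. You then transfer to $\mathcal{T}_{\chi}([G])$ by density in $L^2_{-N,\chi}([G])^{\infty}$ inside the region of absolute convergence. Corrected this way, your route is a legitimate and more explicit alternative to the paper's argument.

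Your Step 1 names the right conditions but not the mechanism, and your list of obstructions is incomplete. A Mellin transform in the central variable plus the Langlands decomposition does not by itself kill $H$-periods. The paper instead reduces by density to pseudo-Eisenstein series and decomposes over $P(F)\backslash G(F)/H(F)$ using Matringe's parametrization \cite{Matringe+2015+119+170}. Orbits with nontrivial $N'_{s_w}$ are discarded by cuspidality. What remains is three kinds of integrals:
\begin{itemize}
\item Rankin--Selberg type integrals, killed by \Cref{definition}(1);
\item $G_1$-integrals, killed by (3);
\item linear periods of a single cusp form over $G_{r^+_{i,i}}\times G_{r^-_{i,i}}$.
\end{itemize}
For balanced linear periods, condition (2) enters only through \cite[Proposition 3.1]{xue2025twisted}. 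Unbalanced ones ($r^+_{i,i}\neq r^-_{i,i}$, $r_i\ge 2$) are absent from your (a)--(c) and are excluded by no regularity condition. They vanish automatically by \cite[Lemma 7.1]{matringe2025intertwining}. They cannot be avoided in the odd-size steps $G_{r+1}\times G_r\subset G_{2r+1}$ (\Cref{lemma 2}, needed for $Z'_{r,\eta}=Z_{r,\eta}$), where $\sum_k r^+_{k,k}=\sum_k r^-_{k,k}+1$.

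Finally, the constant-term pieces $F_{r,f}$, $F'_{r,f}$ involve $f_P$, which is not Schwartz on $[G]_P$. They reduce to these lemmas on $G_{2r}$ and $G_{2r-1}$ only after inserting cutoffs $\kappa_j\circ H_P$ and checking that $(\eta,\Delta^*)$-regularity passes to the Levi factors, as in \Cref{F_r=0}.
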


\subsection{Absolute convergence of Zeta integrals}\label{proof of absolute convergence}

We will introduce two more Zeta integrals. For $f\in \mathcal{S}([G])$, $\Phi\in\mathcal{S}(\mathbb{A}^n)$ and $1\leq r\leq n$, we put
$$Z_{r,\eta}(f,\Phi,s_1,s_2)=\int_{\mathcal{P}_r(F)N_{r,n}(\mathbb{A})\backslash G_n(\mathbb{A})}\int_{G_{r-1}(F)N_{r-1,n}(\mathbb{A})\backslash G_n(\mathbb{A})}f_{N_{2r-1,2n,\psi}}(\iota(x,y))\eta^{-1}(\textnormal{det}(x))\Phi(e_ny)|x|^{s_1}|y|^{s_2+\frac{1}{2}}\mathrm{d}x\mathrm{d}y.$$
and
$$Z_{r,\eta}'(f,\Phi,s_1,s_2)=\int_{G_r(F)N_{r,n}(\mathbb{A})\backslash G_n(\mathbb{A})}\int_{\mathcal{P}_{r}(F)N_{r,n}(\mathbb{A})\backslash G_n(\mathbb{A})}f_{N_{2r,2n,\psi}}(\iota(x,y))\eta^{-1}(\textnormal{det}(x))\Phi(e_ny)|x|^{s_1}|y|^{s_2+\frac{1}{2}}\mathrm{d}x\mathrm{d}y.$$
Note that $Z_{1,\eta}(f,\Phi,s_1,s_2)=Z^{\textnormal{BF}}_{\eta}(f,\Phi,s_1,s_2)$. We have the following lemma:
\begin{lemma}\label[lemma]{absolute convergence 3}
    For any $f\in \mathcal{S}([G])$, $\Phi\in\mathcal{S}(\mathbb{A}^n)$, $s_1,s_2\in \mathbb{C}$ and $1\leq r\leq n$, the expressions defining $Z_{r,\eta}(f,\Phi,s_1,s_2)$ and $Z_{r,\eta}'(f,\Phi,s_1,s_2)$ are absolutely convergent.
\end{lemma}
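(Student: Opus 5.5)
The plan is to reduce everything to the Schwartz-function estimate of \Cref{important lemma}(1), applied to the parabolic $P$ of $G=G_{2n}$ whose unipotent radical is $N_{k,2n}$, with $k=2r-1$ or $2r$. This parabolic has Levi $M=G_k\times (G_1)^{2n-k}$, and $\psi$ restricted to $N_{k,2n}$ is $\psi_l$ for a generic $l\in V_P(F)$.

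\textbf{Step 1: parametrize the domain.} Use the Iwasawa decomposition $G_n(\mathbb{A})=P_{r,n}(\mathbb{A})K_n$, and similarly with $r-1$ for the $x$-variable. Write $x=u\,m_x k$ and $y=u'\,m_y k'$ with $m_x\in G_{r-1}(\mathbb{A})\times (\mathbb{A}^\times)^{n-r+1}$ and $m_y\in G_r(\mathbb{A})\times(\mathbb{A}^\times)^{n-r}$. The quotients in the definition are then
\[
G_{r-1}(F)\backslash G_{r-1}(\mathbb{A})\times (\mathbb{A}^\times)^{n-r+1}
\quad\text{and}\quad
\mathcal{P}_r(F)\backslash G_r(\mathbb{A})\times (\mathbb{A}^\times)^{n-r},
\]
up to modular characters coming from the Haar measures. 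The main check is that $\iota(m_x,m_y)$ lands in $M(\mathbb{A})$ for $k=2r-1$. Indeed $\iota$ interlaces, so $G_{r-1}\times G_r$ sits in the upper-left $(2r-1)$-block, and the remaining diagonal tori give the $G_1$ factors. The same holds for $Z'_{r,\eta}$ with $G_r\times G_r$ in the $2r$-block.

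\textbf{Step 2: bound the integrand.} By \Cref{important lemma}(1), for all $N_1,N_2$,
\[
|f_{N_{k,2n},\psi}(mk)|\ll \|\mathrm{Ad}^*(m^{-1})l\|^{-N_1}\,\|m\|_M^{-N_2}\,\delta_P(m)^{-cN_2}.
\]
For $m=\iota(m_x,m_y)$ the quantity $\|\mathrm{Ad}^*(m^{-1})l\|$ controls the simple-root ratios along the torus part: the ratios $t_{j}/t_{j+1}$ of consecutive interlaced entries, and the ratio between the last row of the $G_k$-block and the first torus entry. Taking $N_1$ large gives rapid decay in these ratios. What is left is the overall scaling, i.e.\ the center direction. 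The factor $\|m\|_M^{-N_2}$ gives decay in every direction of $[M]$, including the central one, since $\|\cdot\|_M$ measures the full adelic size and not only $[M]^1$.

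\textbf{Step 3: bound the remaining factors.} Use \Cref{fact 1} to replace $\eta^{-1}$ by $|x|^a$. Bound $|\Phi(e_n y)|\ll \|e_n y\|^{-N}$, or simply by a constant, since $\Phi$ is Schwartz. The characters $|x|^{s_1}$, $|y|^{s_2+1/2}$ and the modular factors are polynomial in $\|m\|_M$ and $\delta_P(m)$. Hence for any fixed $s_1,s_2$ they are absorbed by choosing $N_2$ large.

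\textbf{Step 4: convergence of the majorant.} The integral over the block part reduces to $\int_{\mathcal{P}_r(F)\backslash G_r(\mathbb{A})}\|g\|^{-N}\,\mathrm{d}g$ (and similarly over $G_{r-1}(F)\backslash G_{r-1}(\mathbb{A})$). Mirabolic quotients are not of finite volume, so this is not immediate.

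\textbf{Main obstacle.} The delicate point is exactly this non-finite-volume quotient $\mathcal{P}_r(F)\backslash G_r(\mathbb{A})$. I would handle it by unfolding it as
\[
\mathcal{P}_r(F)\backslash G_r(F)\times [G_r],
\]
with $\mathcal{P}_r(F)\backslash G_r(F)\cong F^r\setminus\{0\}$ via $\gamma\mapsto e_r\gamma$. The majorant then becomes a sum over $v\in F^r\setminus\{0\}$ of terms with decay in $\|vg\|$. I would bound this sum by the Theta-series majorant: by \cite[Lemma 2.3.3]{lu2025periodsdetectingeisensteinseries} it has moderate growth on $[G_r]$. That moderate growth is then absorbed by the rapid decay from $\|m\|_M^{-N_2}$ on $[G_r]$. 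This uses that \Cref{important lemma}(1) gives decay in $\|\cdot\|_M$, i.e.\ on $[M]$ rather than on $M(\mathbb{A})$, which is exactly what is needed here.

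With these bounds in place, the $(\mathbb{A}^\times)$-directions are handled by \Cref{c>1}-type integrals with arbitrarily strong decay, and the compact $K$-integrals are trivial. This gives absolute convergence for all $s_1,s_2\in\mathbb{C}$.
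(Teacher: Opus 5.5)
Your skeleton (Iwasawa decomposition, the bound of \Cref{important lemma}(1) for $P_{2r-1,2n}$ or $P_{2r,2n}$, the chain of $\mathrm{Ad}^*$-ratios, \Cref{c>1} for the torus variables, and a separate estimate on the mirabolic quotient) matches the paper's. However, Step 3 has a genuine gap. The claim that, for fixed $s_1,s_2$, all characters are ``absorbed by choosing $N_2$ large'' is false.

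The bound you quote is $\|\mathrm{Ad}^*(m^{-1})l\|^{-N_1}\bigl(\|m\|_M\,\delta_P(m)^{c}\bigr)^{-N_2}$, where $c>0$ is a constant you do not control. So raising $N_2$ also multiplies the majorant by the unbounded factor $\delta_P(m)^{-cN_2}$. For $m=\iota(D(m_1,a_{r+1},\dots,a_n),D(m_2,b_r,\dots,b_n))$ this factor equals $|m_1m_2|^{-cN_2(2n-2r+1)}\prod_{i=r+1}^n|a_i|^{-cN_2(2n+3-4i)}\prod_{i=r}^n|b_i|^{-cN_2(2n+1-4i)}$, and several of these slopes are negative.

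At the same time, the decay you have is one-sided, because $\|\cdot\|_{\mathbb{A}}$ only penalizes large values. The $\mathrm{Ad}^*$-chain anchored at $\Phi(b_ne_nk)$ therefore bounds $|a_i|$, $|b_i|$ and $\|e_rm_1\|$ from above only. (You must keep this decay of $\Phi$; bounding $\Phi$ by a constant loses the anchor.) The small directions are controlled purely by the exponents:
\begin{itemize}
\item \Cref{c>1} needs each $\mathbb{A}^\times$-exponent to exceed $1$.
\item The available estimate for $\int_{\mathcal{P}_r(F)\backslash G_r(\mathbb{A})}\|e_rm_1\|^{-N}\|m_1\|^{-N_2}|m_1|^{s}\,\mathrm{d}m_1$ and its $[G_{r-1}]$ analogue (Corollary 2.3.4 of Lu--Xi) requires $0<\mathrm{Re}(s)<C$ with $N_2>N_0(C)$.
\end{itemize}
Here $\mathrm{Re}(s)=-cN_2(2n-2r+1)+a+\mathrm{Re}(s_1)+r-n$, which decreases to $-\infty$ as $N_2\to\infty$ for fixed $s_1$. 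The same objection applies to your ``main obstacle'' paragraph. The theta majorant is absorbed by $\|m_1\|^{-N_2}$ only if the factor $|m_1|^{-cN_2(2n-2r+1)}$ does not undo that decay.

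The missing idea is to couple $N_2$ with $(s_1,s_2)$ and then remove the restriction by a twist. The paper first proves convergence on a restricted region:
\begin{itemize}
\item Fix $\mathrm{Re}(s_1)-\mathrm{Re}(s_2)$ and take $\mathrm{Re}(s_1)$ large.
\item Choose $N_2>N_0$ so that the $|m_1|$-exponent lies in $(C',2C')$, where $C'=|a|+\frac12+|\mathrm{Re}(s_1)-\mathrm{Re}(s_2)|$.
\item Then the $|m_2|$-exponent lies in $(0,3C')$, and every torus exponent is automatically $>1$.
\end{itemize}
To reach arbitrary $(s_1,s_2)$, it uses that $f'=f\,|\det|^{-b}$ again lies in $\mathcal{S}([G])$ and that $Z_{r,\eta}(f,\Phi,s_1,s_2)=Z_{r,\eta}(f',\Phi,s_1+b,s_2+b)$. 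A diagonal shift with $b\gg 0$ lands in the region. Without this coupling and twist, your argument does not reach arbitrary $s_1,s_2\in\mathbb{C}$.

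Your unfolding of the mirabolic quotient via the theta majorant is otherwise fine; the paper argues this way in the proof of \Cref{absolute convergence 6}. One minor slip: in $Z_{r,\eta}$, the variable $x$ (carrying $\eta^{-1}(\det x)|x|^{s_1}$) runs over $\mathcal{P}_r(F)N_{r,n}(\mathbb{A})\backslash G_n(\mathbb{A})$ and occupies the odd positions, so it supplies the $G_r$ of the $(2r-1)$-block. Your Step 1 swaps $x$ and $y$, and with that assignment $\iota(m_x,m_y)$ would not lie in $M_{P_{2r-1,2n}}$.
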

The proof of this lemma will be given below. 

\subsubsection{Proof of \Cref{absolute convergence 1}}\label{T_N proof}
\begin{proof}
    Let $B_H$ denote the standard Borel subgroup of $H$. By the Iwasawa decomposition $H(\mathbb{A})=B_H(\mathbb{A})K_H$ and \Cref{fact 1}, we need to find a real number $c_N>0$ such that the following integral
    \begin{align}\label{BF1}
        \int_{K_H}\int_{(\mathbb{A}^{\times})^{2n}} & |W_f(\iota(D(a_1,\dots,a_n),D(b_1,\dots,b_n))k)| |\Phi(b_ne_nk)| \delta_{B_H}(D(a_1,\dots,a_n),D(b_1,\dots,b_n))^{-1}\notag \\ & \prod_{i=1}^n|a_i|^{\textnormal{Re}(s_1)+a}\prod_{i=1}^n|b_i|^{\textnormal{Re}(s_2)+\frac{1}{2}} \prod_{i=1}^n\mathrm{d}a_i\mathrm{d}b_i\;\mathrm{d}k
    \end{align}
    is absolutely convergent when $s_1,s_2\in \mathcal{H}_{>c_N}$, where
    $$D(a_1,\dots,a_n)=\textnormal{diag}(a_1,\dots,a_n)$$
    and the modulus character is given by
    $$\delta_{B_H}(D(a_1,\dots,a_n),D(b_1,\dots,b_n))=\prod_{i=1}^n|a_ib_i|^{n+1-2i}.$$
    By \Cref{important lemma}(2), for every $N_1\geq 0$,
    $$|W_f(\iota(D(a_1,\dots,a_n),D(b_1,\dots,b_n))k)|\ll \prod_{i=1}^n\|\frac{a_i}{b_i}\|_{\mathbb{A}}^{-N_1}\prod_{i=1}^{n-1}\|\frac{b_i}{a_{i+1}}\|_{\mathbb{A}}^{-N_1}\prod_{i=1}^n\|a_i\|^N_{G_1}\|b_i\|^N_{G_1}$$
    for any $(k,a_1,\dots,a_n,b_1,\dots,b_n)\in K_H\times (\mathbb{A}^{\times})^{2n}$. Since $\Phi\in \mathcal{S}(\mathbb{A}^n)$, for all $N_1>0$, we have $|\Phi(b_ne_nk)|\ll\|b_n\|_{\mathbb{A}}^{-N_1}$ for any $(k,b_n)\in K_H\times \mathbb{A}^{\times}$. Therefore, for any $N_2>0$, there exists $N_1$ such that
    $$\prod_{i=1}^n\|\frac{a_i}{b_i}\|_{\mathbb{A}}^{-N_1}\prod_{i=1}^{n-1}\|\frac{b_i}{a_{i+1}}\|_{\mathbb{A}}^{-N_1}\|b_n\|_{\mathbb{A}}^{-N_1}\ll \prod_{i=1}^n\|a_i\|_{\mathbb{A}}^{-N_2}\prod_{i=1}^n\|b_i\|_{\mathbb{A}}^{-N_2}.$$
    Therefore, \Cref{BF1} is essentially bounded by
    \begin{equation}\label{BF2}
        \prod_{i=1}^n\int_{G_1}\|a_i\|_{\mathbb{A}^{\times}}^N|a_i|^{\textnormal{Re}(s_1)+a+2i-n-1}\|a_i\|_{\mathbb{A}}^{-N_2}\prod_{i=1}^n\int_{\mathbb{A}^{\times}}\|b_i\|_{G_1}^N|b_i|^{\textnormal{Re}(s_2)-\frac{1}{2}+2i-n}\|b_i\|_{\mathbb{A}}^{-N_2}.
    \end{equation}
    Note that there exists $M>0$ such that $\|x\|_{G_1}\ll \max(|x|^M,|x|^{-M})$. Together with \cite[Lemma 2.2.5]{lu2025periodsdetectingeisensteinseries}, there exists $c_N>0$ such that when $(s_1,s_2)\in \mathcal{H}_{>c_N}\times \mathcal{H}_{>c_N}$, \Cref{BF2} is absolutely convergent. Moreover, the above estimate is uniform when $(s_1,s_2)$ ranges over a compact subset of $\mathcal{H}_{>c_N}\times \mathcal{H}_{>c_N}$. Hence the integral converges locally uniformly and defines a holomorphic function of $(s_1,s_2)$.

    Finally, the continuity follows from the fact that the implied constant in the Whittaker estimate is controlled by a continuous seminorm on $\mathcal{T}_N([G])$, as follows from \Cref{important lemma}(2). The proof is complete.
\end{proof}

\subsubsection{Proof of \Cref{absolute convergence 3}}

\begin{proof}
 We fix the functions $f\in\mathcal{S}([G])$ and $\Phi\in\mathcal{S}(\mathbb{A}^n)$.  We first prove that $Z_{r,\eta}(f,\Phi,s_1,s_2)$ is absolutely convergent for any $1\leq r\leq n$ and $s_1,s_2\in \mathbb{C}$.

 By the Iwasawa decomposition $G_n(\mathbb{A})=P_{r,n}(\mathbb{A})K$ and $G_n(\mathbb{A})=P_{r-1,n}(\mathbb{A})K$, we need to prove the integral
 \begin{align}\label{BF3}
     \int_{K}\int_{\mathcal{P}_r(F)\backslash G_r(\mathbb{A})\times (\mathbb{A}^{\times})^{n-r}} \int_{K}\int_{[G_{r-1}]\times (\mathbb{A}^{\times})^{n-r+1}} & f_{N_{2r-1,2n,\psi}} (\iota(D(m_1,a_{r+1},\dots,a_n)k_1,D(m_2,b_r,\dots,b_n)k_2)) \notag \\
     & \delta_{P_{r,n}}(D(m_1,a_{r+1},\dots,a_n))^{-1}\delta_{P_{r-1,n}}(D(m_2,b_r,\dots,b_n))^{-1}|\Phi(b_ne_nk_2)|\notag \\
     & |m_1|^{\textnormal{Re}(s_1)+a}|m_2|^{\textnormal{Re}(s_2)+\frac{1}{2}}\prod_{i=r+1}^n|a_i|^{\textnormal{Re}(s_1)+a}\prod_{i=r}^n|b_i|^{\textnormal{Re}(s_2)+\frac{1}{2}} \notag \\
     &\mathrm{d} m_1 \mathrm{d}m_2\prod_{i=r+1}^n\mathrm{d}a_i\prod_{i=r}^n\mathrm{d}b_i\;\mathrm{d}k_1\mathrm{d}k_2
 \end{align}
 is absolutely convergent for any $(s_1,s_2)\in \mathbb{C}^2$, where the modulus character is given as follows:
 $$\delta_{P_{r,n}}(D(m_1,a_{r+1},\dots,a_n))=|m_1|^{n-r}\prod_{i=r+1}^n|a_i|^{n+1-2i};$$
 $$\delta_{P_{r-1,n}}(D(m_2,b_r,\dots,b_n))=|m_2|^{n-r+1}\prod_{i=r}^n|b_i|^{n+1-2i}.$$

 Let $m^1=D(m_1,a_{r+1},\dots,a_n)$ and $m^2=D(m_2,b_r,\dots,b_n)$. Denote the upper left $(2r-1)\times (2r-1)$-block of $\iota(m^1,m^2)$ by $m^0$. By \Cref{important lemma}(1), there exists $c>0$ such that for every $N_1,N_2\geq 0$ and $k_1,k_2\in K$, 
 $$|f_{N_{2r-1,2n,\psi}}(\iota(m^1k_1,m^2k_2))|\ll \|\textnormal{Ad}^*(\iota(m^1,m^2)^{-1})l\|_{\mathbb{A}^{2r-1}}^{-N_1}\|\iota(m^1,m^2)\|_{M_{P_{2r-1,2n}}}^{-N_2}\delta_{P_{2r-1,2n}}(\iota(m^1,m^2))^{-cN_2}$$
 where the modulus character is given by
 $$\delta_{P_{2r-1,2n}}(\iota(m^1,m^2))=|m_1|^{2n-2r+1}|m_2|^{2n-2r+1}\prod_{i=r+1}^n|a_i|^{2n+3-4i}\prod_{i=r}^n|b_i|^{2n+1-4i}$$
 and $l=(e_{2r-1},1,\dots,1)\in \mathbb{A}^{2n-1}$ (the first $1$ appears at the $2r-1$-th position). Here, the action on $l$ is given by
 $$\textnormal{Ad}^*(\iota(m^1,m^2)^{-1})l=(b_r^{-1}e_{2r-1}m^0, \frac{b_r}{a_{r+1}},\frac{a_{r+1}}{b_{r+1}},\dots,\frac{b_{n-1}}{a_n},\frac{a_n}{b_n}).$$
 Therefore, we have the following estimate:
 \begin{align*}
 \|\textnormal{Ad}^*(\iota(m^1,m^2)^{-1})l\|_{\mathbb{A}^{2r-1}} & \sim \|b_r^{-1}e_{2r-1}m^0\|_{\mathbb{A}^{2r-1}}\prod_{i=r}^{n-1}\|b_ia_{i+1}^{-1}\|_{\mathbb{A}}\prod_{i=r+1}^n\|a_ib_i^{-1}\|_{\mathbb{A}} \\ & = \|b_r^{-1}e_{r}m_1\|_{\mathbb{A}^{r}}\prod_{i=r}^{n-1}\|b_ia_{i+1}^{-1}\|_{\mathbb{A}}\prod_{i=r+1}^n\|a_ib_i^{-1}\|_{\mathbb{A}}.
 \end{align*}
 Since $\Phi\in\mathcal{S}(\mathbb{A}^n)$, for any $N_1> 0$ and $k_2\in K$, we have $|\Phi(b_ne_nk_2)|\ll\|b_n\|_{\mathbb{A}}^{-N_1}$. Therefore, for any $N_3>0$, there exists $N_1>0$ such that
 $$\|b_r^{-1}e_{r}m_1\|_{\mathbb{A}^{r}}^{-N_1}\prod_{i=r}^{n-1}\|b_ia_{i+1}^{-1}\|_{\mathbb{A}}^{-N_1}\prod_{i=r+1}^n\|a_ib_i^{-1}\|_{\mathbb{A}}^{-N_1} \|b_n\|_{\mathbb{A}}^{-N_1}\ll \|e_{r}m_1\|_{\mathbb{A}^{r}}^{-N_3}\prod_{i=r}^{n}\|b_i\|_{\mathbb{A}}^{-N_3}\prod_{i=r+1}^n\|a_i\|_{\mathbb{A}}^{-N_3}.$$
 Moreover, we have 
 $$\|\iota(m^1,m^2)\|_{M_{P_{2r-1,2n}}}\sim \|m_1\|_{G_r}\|m_2\|_{G_{r-1}}\prod_{i=r+1}^n\|a_i\|_{G_1}\prod_{i=r}^n\|b_i\|_{G_1}.$$
 Let $a_1=\textnormal{Re}(s_1)$ and $a_2=\textnormal{Re}(s_2)$. Then \Cref{BF3} is essentially bounded by the product of the following integrals:
 \begin{equation}\label{BF4}
     \int_{\mathcal{P}_r(F)\backslash G_r(\mathbb{A})}\|e_rm_1\|_{\mathbb{A}^{r}}^{-N_3}\|m_1\|_{G_r}^{-N_2}|m_1|^{-cN_2(2n-2r+1)+a+a_1+r-n}\;\mathrm{d}m_1;
 \end{equation}
 \begin{equation}\label{BF5}
     \prod_{i=r+1}^n\int_{\mathbb{A}^{\times}}\|a_i\|_{\mathbb{A}}^{-N_3}\|a_i\|_{G_1}^{-N_2}|a_i|^{-cN_2(2n+3-4i)+a_1+a+2i-n-1}\;\mathrm{d}a_i;
 \end{equation}
 \begin{equation}\label{BF6}
     \int_{[G_{r-1}]}\|m_2\|_{G_{r-1}}^{-N_2}|m_2|^{-cN_2(2n-2r+1)+a_2+\frac{1}{2}+r-n-1}\;\mathrm{d}m_2;
 \end{equation}
 \begin{equation}\label{BF7}
     \prod_{i=r}^n\int_{\mathbb{A}^{\times}}\|b_i\|_{\mathbb{A}}^{-N_3}\|b_i\|_{G_1}^{-N_2}|b_i|^{-cN_2(2n+1-4i)+a_2-\frac{1}{2}+2i-n}\;\mathrm{d}b_i
 \end{equation}
 where $N_2,N_3>0$ can be chosen arbitrarily. Note that 
 $$|-cN_2(2n-2r+1)+a+a_1+r-n-(-cN_2(2n-2r+1)+a_2+\frac{1}{2}+r-n-1)|\leq |a|+\frac{1}{2}+|a_1-a_2|.$$
 Let $C_{|a_1-a_2|}:=|a|+\frac{1}{2}+|a_1-a_2|$, where the subindex means that this number is only determined by $|a_1-a_2|$. Now fix $|a_1-a_2|$, and let $C=3C_{|a_1-a_2|}$. By \cite[Corollary 2.3.4]{lu2025periodsdetectingeisensteinseries}, there exists $N_0>0$ such that for any $N,N'>N_0$, the integrals
 $$\int_{\mathcal{P}_r(F)\backslash G_r(\mathbb{A})}\|e_rm_1\|_{\mathbb{A}^{r}}^{-N}\|m_1\|_{G_r}^{-N'}|m_1|^{\textnormal{Re}(s)}\;\mathrm{d}m_1$$
 and
 $$\int_{[G_{r-1}]}\|m_2\|_{G_{r-1}}^{-N'}|m_2|^{\textnormal{Re}(s)}\;\mathrm{d}m_2$$
 are both convergent for $0<\textnormal{Re}(s)<C$. When $a_1$ is large enough, we can always find $N'>N_0$ such that 
 $$C_{|a_1-a_2|}<-cN_2(2n-2r+1)+a+a_1+r-n<2C_{|a_1-a_2|}.$$
 Let $N_2=N'$. Note that under this assumption, we have 
 $$0<-cN_2(2n-2r+1)+a_2+\frac{1}{2}+r-n-1<3C_{|a_1-a_2|}=C.$$
 Therefore, for $N_3$ and $a_1$ large enough, $|a_1-a_2|$ fixed, we can always find a suitable $N_2$ such that \Cref{BF4} and \Cref{BF6} are convergent. We can see that for such choice of $N_2$, 
 $$-cN_2(2n+3-4i)+a_1+a+2i-n-1\geq-cN_2(2n-2r+1)+a+a_1+r-n+1>1$$
 for any $r+1\leq i\leq n$ and
 $$-cN_2(2n+1-4i)+a_2-\frac{1}{2}+2i-n\geq-cN_2(2n-2r+1)+a_2+\frac{1}{2}+r-n-1+1>1$$
 for any $r\leq i\leq n$. Therefore, by \Cref{c>1}, for $N_3$ large enough, \Cref{BF5} and \Cref{BF7} are also convergent.

 Now our conclusion is that for fixed $|\textnormal{Re}(s_1)-\textnormal{Re}(s_2)|$, and $\textnormal{Re}(s_1)$ large enough, $Z_{r,\eta}(f,\Phi,s_1,s_2)$ is absolutely convergent. Now for an arbitrary pair $(s_1,s_2)$, from the above conclusion, there exists $b>0$ such that $Z_{r,\eta}(f,\Phi,s_1+b,s_2+b)$ is absolutely convergent for any $f\in \mathcal{S}([G])$ and $\Phi\in\mathcal{S}(\mathbb{A}^n$). Define $f'\in \mathcal{S}([G])$ by
 $$f'(g)=f(g)|g|^{-b}.$$
 Then we can see that $Z_{r,\eta}(f,\Phi,s_1,s_2)=Z_{r,\eta}(f',\Phi,s_1+b,s_2+b)$, which is absolutely convergent. This finishes the proof of $Z_{r,\eta}(f,\Phi,s_1,s_2)$ for any $1\leq r\leq n$. 

 The absolute convergence of $Z_{r,\eta}'(f,\Phi,s_1,s_2)$ can be proved similarly for any $1\leq r\leq n$ and $s_1,s_2\in\mathbb{C}$. 
 The proof is complete.
\end{proof}

\subsection{Unfolding}\label{unfolding}

In this section, we will prove the unfolding equation, mainly the following proposition:
\begin{prop}\label[prop]{unfolding twisted BF}
    Let $\chi\in \mathfrak{X}_{\eta,\Delta^*}(G)$. Then for any $(f,\Phi)\in \mathcal{S}_{\chi}([G])\times \mathcal{S}(\mathbb{A}^n)$ and $s_1,s_2\in\mathbb{C}$, we have
    $$\mathcal{P}_{\eta}(f,\Phi,s_1,s_2)=Z_{n,\eta}'(f,\Phi,s_1,s_2)=Z_{n,\eta}(f,\Phi,s_1,s_2)=\cdots=Z_{1,\eta}'(f,\Phi,s_1,s_2)=Z_{1,\eta}(f,\Phi,s_1,s_2).$$
\end{prop}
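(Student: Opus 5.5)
We prove the chain of equalities by descending induction on $r$. Each equality comes from a Fourier expansion of the current Fourier coefficient of $f$ along one extra root group of $G=G_{2n}$. This is the Jacquet--Shalika type unfolding for the interlacing embedding, which alternates between odd and even rows. All integrals involved converge absolutely for $f\in\mathcal{S}([G])$ and all $(s_1,s_2)$ by \Cref{absolute convergence 3}. Hence every interchange of sum and integral is justified, and it suffices to identify the terms and show that the degenerate ones vanish.

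\textbf{Step 0: $\mathcal{P}_\eta=Z'_{n,\eta}$.} Write $\Theta(y,\Phi)=\Phi(0)|y|^{1/2}+\sum_{v\neq 0}\Phi(vy)|y|^{1/2}$ and use $F^n\smallsetminus\{0\}=e_nG_n(F)=\mathcal{P}_n(F)\backslash G_n(F)$ to unfold the $y$-integral to $\mathcal{P}_n(F)\backslash G_n(\mathbb{A})$. The constant term contributes
$$\Phi(0)\int_{[G_n]\times[G_n]} f(\iota(x,y))\eta^{-1}(\det x)|x|^{s_1}|y|^{s_2+1/2}\,\mathrm{d}x\,\mathrm{d}y.$$
This is a $(G_n\times G_n)$-period twisted by a character, i.e.\ a Friedberg--Jacquet/linear type period. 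We will show it vanishes on $\mathcal{S}_\chi([G])$ by condition (1) of \Cref{definition}, possibly together with condition (2). Since $N_{n,2n}=1$ (the Levi is all of $G_{2n}$), $f_{N_{2n,2n},\psi}=f$, and this gives $Z'_{n,\eta}$. Note that $Z'_{n,\eta}$ is then exactly the term with $\mathcal{P}_n(F)$ in $y$.

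\textbf{Step 1: $Z'_{r,\eta}=Z_{r,\eta}$.} Let $U$ be the abelian unipotent group of the $(2r-1)$-th column above the diagonal inside the $G_{2r}$-block, so that $N_{2r-1,2n}=U\cdot N_{2r,2n}$. Expand $f_{N_{2r,2n},\psi}$ along $[U]\simeq[\mathbb{A}^{2r-1}]$. The $\mathcal{P}_r(F)$-variable $y$, which lands in the even positions, together with the $G_r(F)$-variable $x$, acts on the characters of $[U]$. The nonzero characters form one orbit, with stabilizer $G_{r-1}(F)$ in the $y$-variable after transferring the mirabolic $\mathcal{P}_r$ to $x$. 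Collapsing the sum against the quotient $G_r(F)\backslash$ and $\mathcal{P}_r(F)\backslash$ yields $Z_{r,\eta}$ with its quotients $\mathcal{P}_r(F)N_{r,n}$ and $G_{r-1}(F)N_{r-1,n}$. The trivial character gives an integral of the constant term $f_{P_{2r-1,2n}\text{-type}}$ against a period of the Levi $G_{2r-1}\times\cdots$. This Levi period has the shape of a Bump--Friedberg period with a $\mathcal{P}$-type Eisenstein/$\Theta$ piece.

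\textbf{Step 2: $Z_{r,\eta}=Z'_{r-1,\eta}$.} This step is symmetric to Step 1, with the roles of $x$ and $y$ exchanged. Expand along the $(2r-2)$-th column. The orbit of nonzero characters under $\mathcal{P}_r(F)$ in $x$ reduces $x$ to $\mathcal{P}_{r-1}(F)$ and $y$ to $G_{r-1}(F)$, giving $Z'_{r-1,\eta}$. The trivial orbit again gives a period of a constant term.

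\textbf{Main obstacle: vanishing of the degenerate terms.} Each such term is an integral of $f_P$, for a maximal parabolic $P$ with Levi $G_k\times G_{2n-k}$, against a product of a smaller twisted Bump--Friedberg/linear period and a character on the other block. Writing $f\in\mathcal{S}_\chi([G])$ spectrally, $f_P$ lies in the span of functions with cuspidal support in $\chi^{M_P}$. The $G_1$-factors, and the characters $\eta^{-1}|\cdot|^{s}$ integrated over $[G_1]$ via the $|\det|$-torus, force a Hecke character to match $\eta$ on $[G_1]^1$, and condition (3) excludes this. The even blocks would need $\pi_i\simeq\pi_j^\vee\otimes\eta|\cdot|^{-s}$ or a pole of $L(1,\pi_i,\wedge^2\otimes\eta^{-1}|\cdot|^s)$, which conditions (1) and (2) exclude. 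I plan to make this rigorous as follows: integrate first over the central direction $A_P^\infty$, obtaining a Mellin transform in $\lambda$; then show that the inner period, viewed as a functional on $\mathcal{S}_{\chi^{M_P}}([M_P])$, is identically zero, by an induction on $n$ using the already-established cases of the proposition for smaller groups, i.e.\ the cuspidal nonvanishing criterion, which involves exactly these $L$-functions. This inductive bookkeeping, controlling exactly which degenerate periods arise and checking that the regularity conditions pass to $\chi^{M_P}$, is where I expect the real work to lie.
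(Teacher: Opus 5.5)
Your unfolding skeleton matches the paper's: split off $v=0$ in $\Theta$, then alternately Fourier-expand along one column of $G_{2n}$, with the nonzero characters forming a single orbit. There are two small slips. You index the columns by their length: the step $Z'_{r,\eta}\to Z_{r,\eta}$ uses the $2r$-th column. In $Z_{r,\eta}\to Z'_{r-1,\eta}$ you have $x$ and $y$ reversed. After integrating out the last column $U_r$ of $\mathcal{P}_r$ in $x$, it is $G_{r-1}(F)$ acting through $y$ that permutes the nonzero characters, with stabilizer $\mathcal{P}_{r-1}(F)$. So $y$ ends in $\mathcal{P}_{r-1}(F)$ and $x$ in $G_{r-1}(F)$, as in the definition of $Z'_{r-1,\eta}$.

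More seriously, you misidentify the degenerate terms: there is no $\Theta$ or mirabolic Eisenstein piece.
\begin{itemize}
\item In $Z'_{r,\eta}\to Z_{r,\eta}$, the trivial character gives the constant term $f_{P_{(2r-1,2n-2r+1)}}$, integrated on the $G_{2r-1}$-block over the plain $[G_r]\times[G_{r-1}]$ period (via $\iota$) against $\eta^{-1}(\det x)|x|^{s_1}|y|^{s_2}$.
\item In $Z_{r,\eta}\to Z'_{r-1,\eta}$, it gives $f_{P_{(2r-2,2n-2r+2)}}$ against the $[G_{r-1}]\times[G_{r-1}]$ period on the $G_{2r-2}$-block.
\item The $\Phi(0)$-term of Step 0 is the $[G_n]\times[G_n]$ period itself.
\end{itemize}
What must be shown is that these plain periods on $G_{2m}$ and $G_{2m+1}$ vanish on $\mathcal{S}_{\chi'}$ for every $(\eta,\Delta^*)$-regular $\chi'$; this is \Cref{lemma 1} and \Cref{lemma 2}. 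The passage to the non-Schwartz constant terms is then handled by truncation with $\kappa_j\circ H_P$ and dominated convergence (\Cref{F_r=0}). These vanishing statements are not cases of the proposition in lower rank. They are its input at every rank, already at Step 0, so your proposed induction on the proposition is circular.

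The cuspidal nonvanishing criterion does not rescue this either, since it only treats $\chi'=(G_{2m},\pi)$ with $\pi$ cuspidal. For a general $\chi'=(M,\pi)$, the missing idea is an orbit analysis. By density, reduce to pseudo-Eisenstein series $\sum_{\gamma\in P(F)\backslash G(F)}\phi(\gamma g)$. Unfold over $P(F)\backslash G(F)/H(F)$, using Matringe's description of the orbits by data $\{r_{i,j},(r^+_{k,k},r^-_{k,k})\}$ \cite[Proposition 3.2]{Matringe+2015+119+170}. Then kill each orbit:
\begin{itemize}
\item by cuspidality, when the stabilizer projects onto a nontrivial unipotent subgroup of $M$;
\item by condition (1), when two distinct blocks are paired;
\item by condition (2) together with \cite[Proposition 3.1]{xue2025twisted}, for a balanced split of a block;
\item by condition (3), for $G_1$-blocks;
\item for an unbalanced split $r^+_{i,i}\neq r^-_{i,i}$ of a block of size $\geq 2$, by the vanishing of $G_a\times G_b$-periods of cusp forms for $a\neq b$ \cite[Lemma 7.1]{matringe2025intertwining}.
\end{itemize}
None of the regularity conditions supplies this last input. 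Your heuristic omits it, as does your Step 0, which invokes only conditions (1) and (2). Yet it cannot be avoided. In the odd case, $\sum_k r^+_{k,k}=\sum_k r^-_{k,k}+1$ forces an unbalanced block, and an unpaired cuspidal block of odd size $\geq 3$ is always unbalanced.
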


The proof is divided into three parts:
\begin{itemize}
    \item[(1)] $\mathcal{P}_{\eta}(f,\Phi,s_1,s_2)=Z'_{n,\eta}(f,\Phi,s_1,s_2)$;
    \item[(2)] $Z_{r+1,\eta}(f,\Phi,s_1,s_2)=Z_{r,\eta}'(f,\Phi,s_1,s_2)$ for any $1\leq r\leq n-1$;
    \item[(3)] $Z_{r,\eta}'(f,\Phi,s_1,s_2)=Z_{r,\eta}(f,\Phi,s_1,s_2)$ for any $1\leq r\leq n$.
\end{itemize}

To start with, we need the following two lemmas:

\begin{lemma}\label[lemma]{lemma 1}
    Choose an arbitrary positive integer $r$. Let $G=G_{2r}$ and $H=G_r\times G_r$. We embed $H$ into $G$ by $\iota$. Let $\chi\in \mathfrak{X}_{(\eta,\Delta^*)}(G)$. Then for any $f\in \mathcal{S}_{\chi}([G])$, we have
    \begin{equation}
    \int_{[H]}f(\iota(x,y))\eta^{-1}(\textnormal{det}(x))|x|^{s_1}|y|^{s_2}\mathrm{d}x\mathrm{d}y=0
    \end{equation}
    for any $s_1,s_2\in \mathbb{C}$.
\end{lemma}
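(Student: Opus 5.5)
The plan is to reduce the vanishing to the standard description of the $H$-orbits on $P\backslash G$ together with known vanishing results for periods of cuspidal representations. The framework is that of \cite{lu2025periodsdetectingeisensteinseries}. Denote by $\ell(f)$ the left-hand side, for fixed $s_1,s_2$. It is a continuous functional on $\mathcal{S}([G])$ (absolute convergence as in \Cref{twisted BF period}). Moreover, it is $(H,\mu)$-equivariant for the character $\mu(x,y)=\eta(\det x)|x|^{-s_1}|y|^{-s_2}$.

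Since $\mathcal{S}_{\chi}([G])$ is the closure of the span of the pseudo-Eisenstein series in $\mathfrak{O}_{\chi}^P$, with $P=MN$ running over standard parabolics with $\chi=(M,\pi)$, it suffices to prove $\ell(E_P^G(\varphi))=0$ for $\varphi\in\mathcal{S}([G]_P)$ of cuspidal type $\pi$. I would write $\varphi$ by Fourier inversion along $A_M^{\infty}$ as an integral over $\lambda\in \textnormal{Re}\lambda_0+i\mathfrak{a}_M^*$ of Eisenstein series $E(\phi_\lambda,\lambda)$, with $\textnormal{Re}\lambda_0$ deep in the positive chamber. Then I would unfold $\ell(E_P^G(\varphi))$ directly over the double cosets $P(F)\backslash G(F)/\iota(H)(F)$. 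This double coset space is finite, and its elements are indexed by the usual involution data of the linear period (cf.\ Jacquet--Rallis / Friedberg--Jacquet, Offen's orbit analysis).

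For each orbit with representative $w$, the contribution is an integral over $(w^{-1}Pw\cap H)(F)\backslash H(\mathbb{A})$ of $\varphi(w\,\iota(h))\mu^{-1}(h)$. After Iwasawa decomposition and integration over the unipotent part, this inner integral becomes a period of the cuspidal form $\pi$ over the stabilizer $M\cap wHw^{-1}$. That stabilizer is a product of the following pieces:
\begin{itemize}
\item[(a)] diagonal copies $G_{n_i}\hookrightarrow G_{n_i}\times G_{n_j}$ pairing blocks $i\neq j$, giving the integral of $\pi_i\otimes \pi_j\otimes\eta^{-1}|\cdot|^{s}$;
\item[(b)] linear subgroups $G_{m}\times G_{m}\subset G_{2m}$ for a block with $n_i=2m$ even, giving the twisted Friedberg--Jacquet period of $\pi_i$;
\item[(c)] a single $G_1$ for a block with $n_i=1$, giving $\int_{[G_1]}\pi_i\,\eta^{-1}|\cdot|^s$;
\item[(d)] otherwise, a subgroup containing the unipotent radical of a proper parabolic of some $G_{n_i}$.
\end{itemize}
Terms of type (d) vanish by cuspidality. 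Terms of type (a) vanish unless $\pi_i\otimes\eta^{-1}|\cdot|^s\simeq\pi_j^{\vee}$, which is excluded by condition (1). Terms of type (c) vanish since the integral over $[G_1]^1$ is zero unless $\eta|_{[G_1]^1}=\pi_i|_{[G_1]^1}$, which is excluded by condition (3). For terms of type (b), Friedberg--Jacquet shows that a nonzero twisted linear period of a cuspidal $\pi_i$ on $G_{2m}$ forces $L(x,\pi_i,\wedge^2\otimes\eta^{-1}|\cdot|^s)$ to have a pole at $x=1$, which is excluded by condition (2). Every orbit contains at least one block, so each contribution vanishes identically in $\lambda$, and integrating over $\lambda$ gives $\ell(E_P^G\varphi)=0$.

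The main obstacle is justifying the orbit-by-orbit unfolding. The individual orbit integrals need not converge absolutely, because the stabilizers are not unimodular and have non-compact quotients. I would handle this first by working with $\textnormal{Re}\lambda_0$ in a sufficiently positive cone (or with a Schwartz pseudo-Eisenstein datum whose Mellin transform is supported there), so that each orbit integral converges absolutely by estimates of the type in \Cref{important lemma} and \Cref{c>1}. The equality is then a genuine rearrangement of an absolutely convergent sum. A secondary difficulty is correctly identifying the stabilizer types in (a)--(d), including the precise twist $|\cdot|^s$, where $s$ depends on $s_1,s_2,\lambda$. This is harmless because conditions (1) and (2) are stated for all $s\in\mathbb{C}$ (compare \Cref{Remark 1}).
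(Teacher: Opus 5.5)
Your strategy is the paper's: reduce to pseudo-Eisenstein series, unfold over $P(F)\backslash G(F)/H(F)$, and kill each orbit. However, the orbit analysis has a genuine gap, because your list (a)--(d) is not exhaustive. In Matringe's parametrization, the balance condition $\sum_k r_{k,k}^+=\sum_k r_{k,k}^-$ is only imposed globally. So an unpaired block ($r_{i,i}=n_i$, exactly the situation in which no unipotent radical appears) can meet $wHw^{-1}$ in $G_p\times G_q$ with any $p+q=n_i$, not only with $p=q$.

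For example, take $G=G_4$, $M=G_3\times G_1$, and the orbit with $(r_{1,1}^+,r_{1,1}^-)=(2,1)$ and $(r_{2,2}^+,r_{2,2}^-)=(0,1)$. Its contribution is the product of two factors: a $G_2\times G_1$-period of a cusp form on $G_3$, and $\int_{[G_1]}\phi_2(a)|a|^{s_2}\,\mathrm{d}a$. Take $\pi_2$ trivial on $[G_1]^1$ and $\eta$ nontrivial on $[G_1]^1$. This datum is $(\eta,\Delta^*)$-regular, and the second factor need not vanish. The vanishing therefore rests entirely on the $(2,1)$-period, which is neither of type (a)--(d) nor killed by conditions (1)--(3). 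What you need is the vanishing of $(p,q)$-linear periods of cusp forms for $p\neq q$, $p+q\geq 2$: Ash--Ginzburg--Rallis, or \cite[Lemma 7.1]{matringe2025intertwining} as the paper uses. The degenerate case $q=0$ is orthogonality of cusp forms to characters of $\det$. This is an additional input, not cuspidality in the sense of your (d).

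The same example exposes an error in (c). A $G_1$-block lying in the $(-1)$-eigenspace of $\epsilon_{s_w}$ contributes $\int_{[G_1]}\pi_i|\cdot|^{s}$ with no $\eta$, and condition (3) says nothing about it. So it is false that every factor vanishes; what is true is that every orbit has at least one vanishing factor. In the residual case where all blocks are unpaired $G_1$'s, you must use the global balance to see that exactly $r\geq 1$ blocks lie in the $(+1)$-eigenspace, and apply (3) to one of them.

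With these additions your case analysis becomes the paper's:
\begin{itemize}
\item paired blocks vanish by (1);
\item unequal splits vanish by the $p\neq q$ result;
\item equal splits vanish by (2) together with the twisted Friedberg--Jacquet (Xue--Zhang) criterion;
\item the all-$G_1$ case vanishes by (3) applied to a positive block.
\end{itemize}

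Separately, your Fourier inversion into Eisenstein series $E(\phi_\lambda,\lambda)$ is unnecessary, and your stated worry is unfounded. For Schwartz $\varphi$ on $[G]_P$, the function $g\mapsto\sum_\gamma|\varphi(\gamma g)|$ is rapidly decreasing on $[G]$. Hence the full unfolded sum over orbits converges absolutely, and each orbit integral converges by Tonelli, even though the stabilizers are not unimodular. The paper simply unfolds, and shows that the inner integral over $[H_w]$ vanishes for every right translate of $\varphi$. Applying the period to individual Eisenstein series would create convergence problems rather than solve them, since $H$ contains the center of $G$.
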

\begin{proof}
    Let $\chi=(M,\pi)$, where $M=\prod_{i=1}^lG_{r_i}$ and $\pi=\boxtimes_{i=1}^l\pi_i$. Let $P$ denote the standard parabolic subgroup of $G$ with Levi component $M$. For any $s_1,s_2\in \mathbb{C}$, we can see that the above integral is absolutely convergent and defines a functional from $\mathcal{S}_{\chi}([G])$ to $\mathbb{C}$. By \cite[Lemma 2.5.2]{lu2025periodsdetectingeisensteinseries}, we only need to consider the case where $f$ is a pseudo Eisenstein series. Therefore, we can write
    $$f(g)=\sum_{\gamma\in P(F)\backslash G(F)}\phi(\gamma g),$$
    where $\phi$ is a function in $\textnormal{Ind}_{P(\mathbb{A})}^{G(\mathbb{A})} \pi$ twisted by a Schwartz function on $A_P^{\infty}$. Denote the space of such functions by $\mathcal{S}_{\textnormal{Ind}\chi}([G]_P)$. %Let $\phi_i=\phi|_{G_{n_i}}\in \mathcal{S}_{\pi_i\otimes|\cdot|^a}([G_{n_i}])$ for some $a\in \mathbb{R}$ (here the twisting term comes from the modulus character $\delta_P$; we don't write it out explicitly because it can be somehow absorbed into the terms $|\cdot|^{s_1}$ and $|\cdot|^{s_2}$). 

    For such $f$, we can rewrite the integral as
    \begin{align}\label{BF8}
        \int_{[H]}f(\iota(x,y))\eta^{-1}(\textnormal{det}(x))|x|^{s_1}|y|^{s_2}\mathrm{d}x\mathrm{d}y & = \int_{[H]}\sum_{\gamma\in P(F)\backslash G(F)}\phi(\gamma\iota(x,y))\eta^{-1}(\textnormal{det}(x))|x|^{s_1}|y|^{s_2}\mathrm{d}x\mathrm{d}y \notag \\
        & = \int_{[H]}\sum_{w\in P(F)\backslash G(F)/H(F)}\sum_{\delta \in H_w(F)\backslash H(F)}\phi(w\delta \iota(x,y))\eta^{-1}(\textnormal{det}(x))|x|^{s_1}|y|^{s_2}\mathrm{d}x\mathrm{d}y \notag\\
        & = \sum_{w\in P(F)\backslash G(F)/H(F)}\int_{[H]}\sum_{\delta \in H_w(F)\backslash H(F)}\phi(w\delta \iota(x,y))\eta^{-1}(\textnormal{det}(x))|x|^{s_1}|y|^{s_2}\mathrm{d}x\mathrm{d}y.
    \end{align}
    where $H_w=w^{-1}Pw\cap H$. Here, we can regard $\eta^{-1}(\textnormal{det}(x))$, $|x|^{s_1}$ and $|y|^{s_2}$ as three Hecke characters on $[H]$. Denote them by $\xi_1$, $\xi_2$ and $\xi_3$ respectively. Thus, we can rewrite \Cref{BF8} as
    \begin{align}\label{BF9}
        \sum_{w\in P(F)\backslash G(F)/H(F)} & \int_{[H]}\sum_{\delta \in H_w(F)\backslash H(F)}\phi(w\delta \iota(x,y))\eta^{-1}(\textnormal{det}(x))|x|^{s_1}|y|^{s_2}\mathrm{d}x\mathrm{d}y \notag \\
        & =\sum_{w\in P(F)\backslash G(F)/H(F)}\int_{[H]}\sum_{\delta \in H_w(F)\backslash H(F)}\phi(w\delta h)\xi_1(h)\xi_2(h)\xi_3(h)\mathrm{d}h \notag \\
        & =\sum_{w\in P(F)\backslash G(F)/H(F)}\int_{H_w(F)\backslash H(\mathbb{A})}\phi(wh)\xi_1(h)\xi_2(h)\xi_3(h)\mathrm{d}h.
    \end{align}
    We have the following claim:
    \begin{claim}\label{claim 1}
        For any $\phi\in \mathcal{S}_{\textnormal{Ind}\chi}([G]_P)$ and $w\in P(F)\backslash G(F)/H(F)$, We have
        \begin{equation}
            \int_{[H_w]}\phi(wh)\xi_1(h)\xi_2(h)\xi_3(h)\mathrm{d}h=0.
        \end{equation}
    \end{claim}
    Note that for any $\phi \in \mathcal{S}_{\textnormal{Ind}\chi}([G]_P)$ and $g\in G$, the function $R(g)\phi$ is still in $\mathcal{S}_{\textnormal{Ind}\chi}([G]_P)$. Therefore, the vanishing of \Cref{BF9} follows directly from the above claim. 
\end{proof}
\begin{proof}[Proof of \Cref{claim 1}]
    For a double coset in $P(F)\backslash G(F)/H(F)$, we choose a representative in $G(F)$, and still denote it by $w$. Replacing $h$ by $w^{-1}hw$, we only need to prove that 
    $$\int_{[P\cap wHw^{-1}]}\phi(hw)\xi_1(h)\xi_2(h)\xi_3(h)\mathrm{d}h=0.$$
    Here we still use the same notations $\xi_1,\xi_2$ and $\xi_3$ to denote the new characters on $P\cap wHw^{-1}$ after conjugation. Note that $\phi(hw)=(R(w)\phi)(h)$, thus it suffices to prove that 
    \begin{equation}\label{BF10}
        \int_{[P\cap wHw^{-1}]}\phi(h)\xi_1(h)\xi_2(h)\xi_3(h)\mathrm{d}h=0
    \end{equation}
    for any $\phi\in \mathcal{S}_{\textnormal{Ind}\chi}([G]_P)$. By \cite[Proposition 3.2]{Matringe+2015+119+170}, $w$ corresponds to a set
    $$s_w=\{r_{i,j},1\leq i<j\leq l,(r_{k,k}^+,r_{k,k}^-),1\leq k\leq l\}$$
    such that if we set $r_{i,j}=r_{j,i}$, and $r_{k,k}=r_{k,k}^++r_{k,k}^-$, then $r_i=\sum_{j=1}^lr_{i,j}$ and $\sum_{k=1}^lr_{k,k}^+=\sum_{k=1}^lr_{k,k}^-$. Note that the set $s_w$ is a partition of $(r_1,\dots,r_l)$. By Proposition 3.3 and 3.4 in loc. cit., we can write $P\cap wHw^{-1}=P_{s_w}\cap wHw^{-1}$ as the semi-direct product of $M_{s_w}\cap wHw^{-1}$ and $N_{s_w}\cap wHw^{-1}$, where $P_{s_w}$ is the parabolic subgroup of $G$ corresponding to the set $s_w$, with the Levi component $M_{s_w}\subset M$ and unipotent radical $N_{s_w}$.

    Let $N'_{s_w}$ denote the unipotent radical of the parabolic subgroup of $M$ corresponding to $s_w$. We first assume that the $N_{s_w}'\neq 1$. There is a natural projection from $N_{s_w}$ to $N'_{s_w}$. Then by Proposition 3.5 in loc. cit., the natural projection restricted on $N_{s_w}\cap wHw^{-1}$ is still surjective. Therefore, the integral
    $$\int_{[N_{s_w}\cap wHw^{-1}]}\phi(u)\mathrm{d}u$$
    vanishes because $\phi|_M$ is a cuspidal form (the twisted Schwartz function on $A_P^{\infty}$ will not affect $\phi$ being a cuspidal form). As a result, \Cref{BF10} holds, and the proof is complete.

    Therefore, we only need to consider the case when $N_{s_w}'$ is trivial. In other words, for each $i$, there must exist $j$ such that $r_i=r_{i,j}$. Under this assumption, we can see that $N_{s_w}=N$, and the vanishing of \Cref{BF10} is reduced to the vanishing of the corresponding integral on $M_{s_w}\cap wHw^{-1}=M\cap wHw^{-1}$ (because $\phi$ is trivial on $N(\mathbb{A})=N_{s_w}(\mathbb{A})$). By Proposition 3.4 in loc. cit., elements in $M_{s_w}\cap wHw^{-1}$ can be expressed as
    $$m=\textnormal{diag}(m_{1,1}^+,m_{1,1}^-,m_{1,2},\cdots,m_{i,j},\cdots,m_{j,i},\cdots,m_{l,l}^+,m_{l,l}^-),$$
    with $m_{k,k}^+\in G_{r_{k,k}^+}$, $m_{k,k}^-\in G_{r_{k,k}^-}$ and $m_{i,j}=m_{j,i}\in G_{r_{i,j}}$.

    We need to firstly analyze how $\xi_1$, $\xi_2$ and $\xi_3$ act on such $m$. Let $\epsilon=\textnormal{diag}(1,-1,\dots,1,-1)$ and $V=\mathbb{A}^{2r}$. By Proposition 3.2 in loc. cit., $\epsilon_{s_w}=w\epsilon w^{-1}$ is the block diagonal matrix with $(i,i)$-block $\textnormal{diag}(I_{r_{i,i}^+},-I_{r_{i,i}^-})$, and $((i<j),(j>i))$-th diagonal block $\begin{pmatrix}
          &  I_{r_i,j}  \\
         I_{r_{i,j}} &
    \end{pmatrix}$. Let us be more concrete: decompose $V$ as
    $$V=\bigoplus_{i=1}^{l}(\mathbb{A}^{r_{i,i}^+}\oplus \mathbb{A}^{r_{i,i}^-})\oplus\bigoplus_{1\leq i<j\leq l}(\mathbb{A}^{r_{i,j}}\oplus \mathbb{A}^{r_{j,i}}).$$
    The action of $\epsilon_{s_w}$ on $V$ is as follows:
    \begin{itemize}
        \item[(1)] For any $1\leq i\leq l$ and any vector $v\in \mathbb{A}^{r_{i,i}^+}$, $\epsilon_{s_w}(v)=v$;
        \item[(2)] For any For any $1\leq i\leq l$ and any vector $v\in \mathbb{A}^{r_{i,i}^-}$, $\epsilon_{s_w}(v)=-v$;
        \item[(3)] For any $1\leq i<j\leq l$, and any vector $(v_1,v_2)\in \mathbb{A}^{r_{i,j}}\oplus \mathbb{A}^{r_{j,i}}$, $\epsilon_{s_w}(v_1,v_2)=(v_2,v_1)$.
    \end{itemize}

    Note that the only eigenvalues of $\epsilon_{s_w}$ are $1$ and $-1$. Denote the eigenspace corresponding to the eigenvalue $1$ by $V^+_{s_w}\subset V$ and the eigenspace corresponding to the eigenvalue $-1$ by $V^-_{s_w}\subset V$. Then the determinant comes from the first $G_r$ factor actually is equal to the $\textnormal{det}(m|_{V^+_{s_w}})$ and the determinant comes from the second $G_r$ factor actually is equal to the $\textnormal{det}(m|_{V^-_{s_w}})$ (since we only care about determinant, choice of basis does not matter). Therefore, we have the following three equations:
    $$\xi_1(m)=\prod_{i=1}^l\eta^{-1}(\textnormal{det}(m_{i,i}^+))\prod_{1\leq i<j\leq l}\eta^{-1}(\textnormal{det}(m_{i,j}));$$
    $$\xi_2(m)=\prod_{i=1}^l|m_{i,i}^+|^{s_1}\prod_{1\leq i<j\leq l}|m_{i,j}|^{s_1};$$
    $$\xi_3(m)=\prod_{i=1}^l|m_{i,i}^-|^{s_2}\prod_{1\leq i<j\leq l}|m_{j,i}|^{s_2}.$$

    To prove \Cref{BF10}, it suffices to prove that the equation holds when $\phi$ is a pure tensor product $\phi_1\otimes\dots\otimes \phi_l$, where each function $\phi_i$ is a cusp form in $\pi_i$ twisted by a Schwartz function on $A_{G_{r_i}}^{\infty}$ (the actual $\phi_i$ should include a twist by $|\cdot|^a$ provided by the modulus character $\delta_P$, but we ignore it because it can be absorbed into the $s_1$ and $s_2$ term). For such $\phi$, let us rewrite the integral:
    \begin{align*}
         \int_{[M_{s_w}\cap wHw^{-1}]}\phi(h)\xi_1(h)\xi_2(h)\xi_3(h)\mathrm{d}h 
        & =\prod_{i=1}^l\int_{[G_{r_{i,i}^+}]\times [G_{r_{i,i}^-}]}\phi_i\begin{pmatrix}
            m_{i,i}^+ & \\ & m_{i,i}^-
        \end{pmatrix}\eta^{-1}(\textnormal{det}(m_{i,i}^+))|m_{i,i}^+|^{s_1}|m_{i,i}^-|^{s_2}\mathrm{d}m_{i,i}^+\mathrm{d}m_{i,i}^- \\ &\;\prod_{1\leq i<j\leq l}\int_{[G_{r_{i,j}}]}\phi_i(m_{i,j})\phi_j(m_{j,i})\eta^{-1}(\textnormal{det}(m_{i,j}))|m_{i,j}|^{s_1+s_2}\mathrm{d}m_{i,j}.
    \end{align*}

    Recall our previous assumption: for each $i$, there must exist $j$ such that $r_i=r_{i,j}$. If there exists $1\leq i<j\leq l$ such $r_{i,j}>0$, we can get $r_i=r_{i,j}=r_{j,i}=r_j$. Then the vanishing of the integral
    $$\int_{[G_{r_i}]^1} \phi_i(m)\eta^{-1}(\textnormal{det}(m))|m|^{s_1}\phi_j(m)|m|^{s_2}\mathrm{d}m$$
    can be followed from the first condition of the definition of $(\eta,\Delta^*)$-regularity. Thus, \Cref{BF10} holds, and the proof is complete.

    Otherwise, we can assume that $r_{i,j}=0$ for any $i\neq j$ and $r_{i,i}=r_i$ for all $1\leq i\leq l$. If there exists $i$ such that $r_{i,i}^+\neq r_{i,i}^-$ and $r_i\geq 2$, then the integral
    $$\int_{[G_{r_{i,i}^+}]\times [G_{r_{i,i}^-}]}\phi_i(m_1,m_2)\eta^{-1}(\textnormal{det}(m_1))|m_1|^{s_1}|m_2|^{s_2}\mathrm{d}m_1\mathrm{d}m_2$$
    vanishes by \cite[Lemma 7.1]{matringe2025intertwining}, which implies that \Cref{BF10} holds. If there exists $i$ such that $r_{i,i}^+= r_{i,i}^-$, then the integral
    $$\int_{[G_{r_{i,i}^+}]\times [G_{r_{i,i}^-}]}\phi_i(m_1,m_2)\eta^{-1}(\textnormal{det}(m_1))|m_1|^{s_1}|m_2|^{s_2}\mathrm{d}m_1\mathrm{d}m_2$$
    vanishes by the second condition of the definition of $(\eta,\Delta^*)$-regularity and \cite[Proposition 3.1]{xue2025twisted}, which implies that \Cref{BF10} holds. The last case is that all blocks are $G_1$ block, with half assigned to the negative sign, and half assigned to the positive sign. Then the integral on each  positive block will be
    $$\int_{[G_1]}\phi_i(g)\eta^{-1}(\textnormal{det}(g))|g|^{s_1}\mathrm{d}g,$$
    which vanishes by the third condition of the definition of $(\eta,\Delta^*)$-regularity, and therefore \Cref{BF10} holds. The proof is complete.
    
\end{proof}

\begin{lemma}\label[lemma]{lemma 2}
    Choose an arbitrary positive integer $r$. Let $G=G_{2r+1}$ and $H=G_{r+1}\times G_r$. We embed $H$ into $G$ by $\iota$. Let $\chi\in \mathfrak{X}_{(\eta,\Delta^*)}(G)$. Then for any $f\in \mathcal{S}_{\chi}([G])$, we have
    \begin{equation}
    \int_{[H]}f(\iota(x,y))\eta^{-1}(\textnormal{det}(x))|x|^{s_1}|y|^{s_2}\mathrm{d}x\mathrm{d}y=0
    \end{equation}
    for any $s_1,s_2\in \mathbb{C}$.
\end{lemma}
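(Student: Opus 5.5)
The plan is to argue exactly as in the proof of \Cref{lemma 1}, now with $G=G_{2r+1}$ and $H=G_{r+1}\times G_r$. The integral converges absolutely for $f\in\mathcal{S}([G])$ and defines a continuous functional on $\mathcal{S}_{\chi}([G])$. By \cite[Lemma 2.5.2]{lu2025periodsdetectingeisensteinseries} it therefore suffices to treat pseudo-Eisenstein series $f=\sum_{\gamma\in P(F)\backslash G(F)}\phi(\gamma g)$ with $\phi\in\mathcal{S}_{\textnormal{Ind}\chi}([G]_P)$.

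Unfolding over the double cosets $P(F)\backslash G(F)/H(F)$ as in \Cref{BF8} and \Cref{BF9}, and using that $\mathcal{S}_{\textnormal{Ind}\chi}([G]_P)$ is stable under right translation, we reduce to the analogue of \Cref{claim 1}:
$$\int_{[P\cap wHw^{-1}]}\phi(h)\xi_1(h)\xi_2(h)\xi_3(h)\mathrm{d}h=0$$
for every representative $w$. Here $H$ is again the centralizer of an involution $\epsilon=\textnormal{diag}(1,-1,\dots,-1,1)$ in $G_{2r+1}$, now with eigenspaces of dimensions $r+1$ and $r$. Matringe's description of $P\backslash G/H$ \cite[Propositions 3.2--3.5]{Matringe+2015+119+170} holds for any $\textnormal{GL}_p\times\textnormal{GL}_q$ in $\textnormal{GL}_{p+q}$. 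The only change is that
$$\sum_k r_{k,k}^+=\sum_k r_{k,k}^-+1.$$

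The case analysis then runs as before.

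\textbf{Case 1: $N'_{s_w}\neq 1$.} The surjection of $N_{s_w}\cap wHw^{-1}$ onto $N'_{s_w}$ together with cuspidality of $\phi|_M$ gives vanishing.

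\textbf{Case 2: $N'_{s_w}=1$ and some $r_{i,j}\neq 0$ with $i\neq j$.} The factor over $[G_{r_{i,j}}]$ is a twisted Rankin--Selberg-type pairing $\phi_i\cdot\phi_j$ against $\eta^{-1}|\cdot|^{s_1+s_2}$. It vanishes by condition (1) of \Cref{definition}.

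\textbf{Case 3: all $r_{i,j}=0$ for $i\neq j$.} Every block is split as $(r_{i,i}^+,r_{i,i}^-)$ with $r_{i,i}=r_i$. If some $i$ with $r_i\geq 2$ has $r_{i,i}^+\neq r_{i,i}^-$, the factor vanishes by \cite[Lemma 7.1]{matringe2025intertwining}. If some $i$ has $r_{i,i}^+=r_{i,i}^-$, it vanishes by condition (2) and \cite[Proposition 3.1]{xue2025twisted}.

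\textbf{Remaining subcase.} This is where the odd size matters. Every $r_i$ with $r_i\geq 2$ would have to have unequal signs, but those are already killed, so all blocks must be $G_1$-blocks with $r_{i,i}^{\pm}\in\{0,1\}$. Since $\sum_k r_{k,k}^+=\sum_k r_{k,k}^-+1$, at least one block (indeed $r+1$ of them) is positive. On a positive $G_1$-block the factor is
$$\int_{[G_1]}\phi_i(g)\eta^{-1}(g)|g|^{s_1}\mathrm{d}g.$$
This vanishes by condition (3) of \Cref{definition}, after integrating first over $[G_1]^1$; the twist by the Schwartz function on $A^\infty$ and by $\delta_P$ only affects the $|\cdot|$-part.

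I expect the main obstacle to be checking that Matringe's orbit parametrization and the factorization through $P_{s_w}$ and $M_{s_w}$ apply verbatim to the unbalanced pair $(r+1,r)$. I would verify this by noting that the cited propositions are stated for a general involution $\textnormal{diag}(I_p,-I_q)$. A secondary point is to recompute $\xi_1,\xi_2,\xi_3$ in terms of $V^{\pm}_{s_w}$, where $V^+$ now has dimension $r+1$; the formulas are identical in shape to those in the proof of \Cref{claim 1}.
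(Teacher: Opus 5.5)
Your proposal is correct and follows the paper's route. The paper's proof of this lemma just reruns the proof of \Cref{lemma 1} via Matringe's parametrization with the modified constraint $\sum_k r_{k,k}^+=\sum_k r_{k,k}^-+1$, and your case analysis is exactly that argument written out, including the observation that the all-$G_1$ subcase always contains a positive block killed by condition (3).
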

\begin{proof}
    The proof is very similar to the proof of \Cref{lemma 1}. By \cite[Proposition 3.2]{Matringe+2015+119+170}, a representative $w$ of a double coset in $P(F)\backslash G(F)/H(F)$ corresponds to a set
    $$s_w=\{r_{i,j},1\leq i<j\leq l,(r_{k,k}^+,r_{k,k}^-),1\leq k\leq l\}$$
    such that if we set $r_{i,j}=r_{j,i}$, and $r_{k,k}=r_{k,k}^++r_{k,k}^-$, then $r_i=\sum_{j=1}^lr_{i,j}$ and $\sum_{k=1}^lr_{k,k}^+=\sum_{k=1}^lr_{k,k}^-+1$. We can also reduce to the case when $P\cap wHw^{-1}=M_{s_w}\cap wHw^{-1}$. Then we can follow exactly the same discussion as in the proof of \Cref{lemma 1} to finish the proof. We leave the details to the reader.

\end{proof}

Now we turn to the proof of the unfolding identity:

\begin{figure}[ht] 
    \centering 
    \begin{tikzpicture}[
        scale=1.05,
        >=Stealth,
        every node/.style={font=\small},
        arrow/.style={->,thin} ]
        \begin{scope}
            \draw (0,0) rectangle (5,5);
            \draw (3.1,5) -- (3.1,1.9)
                  -- (3.7,1.9)
                  -- (3.7,1.3)
                  -- (4.3,1.3)
                  -- (4.3,0.65)
                  -- (5,0.65);
            \draw[fill=gray!55] (2.5,4.35) rectangle (3.1,5);
            \draw (2.5,3.70) rectangle (3.1,4.35);
            \draw[fill=gray!55] (2.5,3.05) rectangle (3.1,3.70);
            \draw (2.5,2.50) rectangle (3.1,3.05);
            \draw[thick] (2.5,2.50) rectangle (3.1,5);
            \node at (4.05,3.55) {$N_{2r+1,2n}$};
            \node[anchor=east] at (1.70,3.15) {$\iota(U_{r+1}\times 1)$};
            \draw[arrow] (1.70,3.18) -- (2.88,3.38);
            \draw[arrow] (1.70,3.25) -- (2.88,4.68);
            \node at (1.90,1.25) {$U_{2r+1}$};
            \draw[arrow] (2.05,1.50) -- (2.75,2.48);
        \end{scope}
        \begin{scope}[xshift=6.3cm]
            \draw (0,0) rectangle (5,5);
            \draw (1.88,4.35) rectangle (2.50,5);
            \draw[fill=gray!55] (1.88,3.70) rectangle (2.50,4.35); \draw (1.88,3.10) rectangle (2.50,3.70);
            \draw[thick] (1.88,3.10) rectangle (2.50,5);
            \draw (2.50,3.10) -- (2.50,2.50)
                  -- (3.12,2.50)
                  -- (3.12,1.82)
                  -- (3.75,1.82)
                  -- (3.75,1.22)
                  -- (4.35,1.22)
                  -- (4.35,0.65)
                  -- (5,0.65);
            \node at (3.95,3.65) {$N_{2r,2n}$};
            \node[anchor=east] at (1.55,3.55) {$\iota(1\times U_r)$};
            \draw[arrow] (1.58,3.60) -- (2.20,4.02);
            \node at (1.75,1.65) {$U_{2r}$};
            \draw[arrow] (1.85,1.90) -- (2.03,3.08);
        \end{scope}
    \end{tikzpicture}
    \caption{Illustrations of the proof of (2) and (3) for $n=4$, $r=2$.}
    \label{fig:staircase-illustration}
\end{figure}

\begin{proof}[Proof of \textnormal{(2)}]
    Let $U_{2r+1}(\mathbb{A})=\{I_{2n}+\sum_{i=1}^{2r}b_iE_{i,2r+1}|b_i\in \mathbb{A}\}\subset G(\mathbb{A})$ and $U_{r+1}(\mathbb{A})=\{I_{n}+\sum_{i=1}^{r}b_iE_{i,r+1}|b_i\in \mathbb{A}\}\subset G_n(\mathbb{A})$ (one can check Figure 1 for a concrete example). Using the Fourier analysis on the compact abelian group $U_{2r+1}(\mathbb{A})/\iota(U_{r+1}\times \{1\})(\mathbb{A})U_{2r+1}(F)$, for any $g\in G(\mathbb{A})$, we can write
    \begin{align*}
        \int_{[U_{r+1}]}f_{N_{2r+1,2n,\psi}}(\iota(u,1)g)\mathrm{du} & = (f_{N_{2r+1,2n,\psi}})_{U_{2r+1}}(g)+\sum_{\gamma\in \mathcal{P}_r(F)\backslash G_r(F)}(f_{N_{2r+1,2n,\psi}})_{U_{2r+1},\psi}(\iota(1,\gamma)g) \\
        & = (f_{N_{2r+1,2n,\psi}})_{U_{2r+1}}(g)+\sum_{\gamma\in \mathcal{P}_r(F)\backslash G_r(F)}f_{N_{2r,2n,\psi}}(\iota(1,\gamma)g),
    \end{align*}
    where $(f_{N_{2r+1,2n,\psi}})_{U_{2r+1}}$ stands for the constant term on the subgroup $U_{2r+1}$ and $(f_{N_{2r+1,2n,\psi}})_{U_{2r+1},\psi}$ stands for the Fourier coefficient of $\psi|_{U_{2r+1}}$. Note that $[U_{r+1}]=U_{r+1}(F)\backslash U_{r+1}(\mathbb{A})$, $\mathcal{P}_{r+1}(F)=G_r(F)U_{r+1}(F)$ and $N_{r,n}(\mathbb{A})=N_{r+1,n}(\mathbb{A})U_{r+1}(\mathbb{A})$. Together with \Cref{absolute convergence 3}, we can write
    \begin{align*}
        Z_{r+1,\eta}(f,\Phi,s_1,s_2) & =\int_{\mathcal{P}_{r+1}(F)N_{r+1,n}(\mathbb{A})\backslash G_n(\mathbb{A})}\int_{G_{r}(F)N_{r,n}(\mathbb{A})\backslash G_n(\mathbb{A})}f_{N_{2r+1,2n,\psi}}(\iota(x,y))\eta^{-1}(\textnormal{det}(x))\Phi(e_ny)|x|^{s_1}|y|^{s_2+\frac{1}{2}}\mathrm{d}x\mathrm{d}y \\
        & =\int_{G_r(F)N_{r,n}(\mathbb{A})\backslash G_n(\mathbb{A})}\int_{\mathcal{P}_{r}(F)N_{r,n}(\mathbb{A})\backslash G_n(\mathbb{A})}f_{N_{2r,2n,\psi}}(\iota(x,y))\eta^{-1}(\textnormal{det}(x))\Phi(e_ny)|x|^{s_1}|y|^{s_2+\frac{1}{2}}\mathrm{d}x\mathrm{d}y \\ & \;\;\;\;+F_{r,f}(s_1,s_2)\\
        & = Z_{r,\eta}'(f,\Phi,s_1,s_2)+F_{r,f}(s_1,s_2),
    \end{align*}
    where
    $$F_{r,f}(s_1,s_2)=\int_{G_r(F)N_{r,n}(\mathbb{A})\backslash G_n(\mathbb{A})}\int_{G_r(F)N_{r,n}(\mathbb{A})\backslash G_n(\mathbb{A})}(f_{N_{2r+1,2n,\psi}})_{U_{2r+1}}(\iota(x,y))\eta^{-1}(\textnormal{det}(x))\Phi(e_ny)|x|^{s_1}|y|^{s_2+\frac{1}{2}}\mathrm{d}x\mathrm{d}y,$$
    and the absolute convergence of the above expression of integral can be proved similarly as the proof for \Cref{absolute convergence 3}. By the next lemma, we can see that $F_{r,f}(s_1,s_2)=0$ for any $s_1,s_2\in \mathbb{C}$ and $f\in \mathcal{S}_{\chi}([G])$ when $\chi$ is $(\eta,\Delta^*)$-regular. The proof is complete.

\end{proof}
\begin{lemma}\label[lemma]{F_r=0}
    For any $s_1,s_2\in \mathbb{C}$ and $f\in \mathcal{S}_{\chi}([G])$, $F_{r,f}(s_1,s_2)$ vanishes when $\chi$ is $(\eta,\Delta^*)$-regular.
\end{lemma}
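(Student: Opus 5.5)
The plan is to identify $(f_{N_{2r+1,2n,\psi}})_{U_{2r+1}}$ with a constant term along the standard parabolic $Q=P_{(2r,2n-2r)}$, followed by a generic Fourier coefficient on the $G_{2n-2r}$ block. Then unfold the lower Levi block and reduce $F_{r,f}$ to an integral of the type treated in \Cref{lemma 1}, but on $G_{2r}$.

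\textbf{Step 1: rewrite the integrand.} Integrating $f$ over $[N_{2r+1,2n}]$ against $\psi$ and then over $[U_{2r+1}]$ trivially amounts to the following. First take the constant term $f_Q$ along the unipotent radical $N_Q$ of $Q$. This is legitimate because $\psi$ is trivial on the entries $(i,2r+1)$ for $i\le 2r$, in particular on $(2r,2r+1)$. Then take the Whittaker coefficient of $f_Q$ on the $G_{2n-2r}$-factor of the Levi $G_{2r}\times G_{2n-2r}$. So
$$(f_{N_{2r+1,2n,\psi}})_{U_{2r+1}}(g)=\int_{[N_{2n-2r}]}f_Q\big(\mathrm{diag}(1,u)g\big)\psi^{-1}(u)\,\mathrm{d}u.$$

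\textbf{Step 2: decompose the domain.} Write $x,y\in G_r(F)N_{r,n}(\mathbb{A})\backslash G_n(\mathbb{A})$ via the Iwasawa decomposition $G_n(\mathbb{A})=P_{r,n}(\mathbb{A})K$. The integral becomes an integral over $[G_r]\times[G_r]$ in $(m_1,m_2)$, over the torus variables $(a_{r+1},\dots,a_n,b_{r+1},\dots,b_n)$ modulo $N$, and over $K\times K$. The interlaced image $\iota(m_1,m_2)$ of the two $G_r$-blocks lies exactly in the $G_{2r}$-factor of $M_Q$, embedded there by the $\iota$ of \Cref{lemma 1} with $2r$ in place of $2n$. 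The remaining variables lie in $G_{2n-2r}$.

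Fix the outer variables $t=(a_i,b_i)$ and $k_1,k_2$. The inner function
$$m\mapsto \int_{[N_{2n-2r}]}f_Q\big(\mathrm{diag}(m,u\,t)\,\iota(k_1,k_2)\big)\psi^{-1}(u)\,\mathrm{d}u$$
is then a function on $[G_{2r}]$. The $[G_r]\times[G_r]$-integral is exactly
$$\int_{[G_r\times G_r]}(\cdot)(\iota(m_1,m_2))\,\eta^{-1}(\det m_1)\,|m_1|^{s_1'}|m_2|^{s_2'},$$
where $s_1',s_2'$ absorb the modulus characters and the $|x|,|y|$ powers.

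\textbf{Step 3: apply \Cref{lemma 1} on $G_{2r}$.} The key input is that $f\in\mathcal{S}_\chi([G])$ forces $f_Q$ to lie in the span of the components indexed by $\chi^{M_Q}$, by the compatibility of constant terms with the Langlands decomposition (\cite[\S 2.9]{beuzart2022global}). After restricting to the $G_{2r}$-factor, and for fixed $G_{2n-2r}$-data, the function is a Schwartz function on $[G_{2r}]$ in $\mathcal{S}_{\chi'}$. Here $\chi'$ is built from a subset of the blocks $\pi_i$ of $\chi$. It remains $(\eta,\Delta^*)$-regular, because all three conditions of \Cref{definition} are conditions on individual blocks or pairs of blocks, and they are stable under passing to sub-data and under $|\cdot|^s$-twists (\Cref{Remark 1}). \Cref{lemma 1} then gives vanishing of the inner integral for every $(s_1',s_2')$, so $F_{r,f}(s_1,s_2)=0$ by Fubini. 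Fubini is justified by the absolute convergence noted after the definition of $F_{r,f}$.

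\textbf{Main obstacle.} The hardest point is Step 3. The function on $[G_{2r}]$ is not a priori Schwartz, since constant terms of Schwartz functions are only Schwartz on $[G]_Q$ with rapid decay in the relevant directions. Also, its cuspidal support must be controlled after the $G_{2n-2r}$-Whittaker integration. To handle this, I would first reduce, as in the proof of \Cref{lemma 1} via \cite[Lemma 2.5.2]{lu2025periodsdetectingeisensteinseries}, to $f$ a pseudo-Eisenstein series $\sum_{P(F)\backslash G(F)}\phi$. I would then compute $f_Q$ explicitly as a finite sum over $W(M,M_Q)$ of pseudo-Eisenstein series on $M_Q$ induced from the Weyl-conjugated data. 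Each summand restricted to $G_{2r}$ is visibly a Schwartz pseudo-Eisenstein series attached to a sub-datum of $\chi$, and \Cref{claim 1} applies termwise. This explicit route avoids abstract continuity arguments about $\mathcal{S}_\chi$.
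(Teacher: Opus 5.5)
\textbf{There is a gap in how you resolve the main obstacle.} Your outline is the paper's through Step~2, including the plan to apply \Cref{lemma 1} on $G_{2r}$ with the $G_{2n-2r}$-variables frozen and to finish by Fubini. You also correctly locate the crux: $m\mapsto (f_{N_{2r+1,2n,\psi}})_{U_{2r+1}}(\mathrm{diag}(m,ut))$ is not a priori in $\mathcal{S}([G_{2r}])$. The gap is in your proposed fix.

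For a pseudo-Eisenstein series $f=E_P(\phi)$, the constant term along $Q=P_{(2r,2n-2r)}$ is $\sum_w E^Q_{P_w}(M(w)\phi)$. For $w\neq 1$, the function $M(w)\phi(g)=\int \phi(w^{-1}ng)\,\mathrm{d}n$ is an intertwining integral, not a Weyl conjugate of $\phi$, and in general it is not in $\mathcal{S}([G]_{P_w})$. To see this, write $M(w)\phi=\int_{\mathrm{Re}\lambda=\lambda_0}M(w,\lambda)\hat{\phi}(\lambda)\,\mathrm{d}\lambda$. The poles of $M(w,\lambda)$ come from zeros of the Rankin--Selberg $L$-functions in the denominators, lying in $-1<\mathrm{Re}\langle\lambda,\alpha^\vee\rangle<0$. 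They block the contour shift towards one side of $A_Q$. Already for $\mathrm{GL}_2$ with $P=Q=B$, $M(w)\phi$ is in general not rapidly decreasing on $[G]_B$.

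So the summands are not ``visibly'' Schwartz pseudo-Eisenstein series. Claim~1 is proved only for $\phi\in\mathcal{S}_{\mathrm{Ind}\chi}([G]_P)$, so it does not apply to them as stated. Rapid decay in $m$ does hold once the $G_{2n-2r}$-variable is frozen. The reason is that the convergence constraints on $\mathrm{Re}\lambda$ only bound the parameters of the blocks sent to the bottom from below, so the top parameters can be moved freely. Proving this, however, needs growth bounds for $M(w,\lambda)$ in vertical strips, plus control of the sums over $(P_w\cap G_{2n-2r})(F)\backslash G_{2n-2r}(F)$ and of the $u$-integral. Your proposal supplies none of these.

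\textbf{How the paper avoids this.} It truncates. It multiplies $f_Q$ by $\kappa_j\circ H_Q$, where $\kappa_j\in C_c^\infty(\mathfrak{a}_Q)$, $|\kappa_j|\le 1$ and $\kappa_j\to 1$. Then $(\kappa_j\circ H_Q)f_Q\in\mathcal{S}_\chi([G]_Q)$ by \cite[2.3.3]{lu2025periodsdetectingeisensteinseries}, and its restriction to $[G_{2r}]$ lies in $\mathcal{S}_{(\eta,\Delta^*)}([G_{2r}])$. \Cref{lemma 1} kills the truncated integrals $I^j_{r,f}$. Dominated convergence, with the absolute convergence of $I_{r,f}$ as the dominating bound, then gives $I_{r,f}=0$.

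\textbf{A cheaper repair within your framework.} You can keep the frozen-$t$ formulation and drop the pseudo-Eisenstein expansion entirely. \Cref{important lemma}(1) gives
$|(f_{N_{2r+1,2n,\psi}})_{U_{2r+1}}(\mathrm{diag}(m,t)k)|\ll_{t,N}\|m\|_{[G_{2r}]}^{-N}|\det m|^{-cN(2n-2r)}$.
Separately, $f\in\mathcal{S}([G])$ gives the bound $\ll_{N'}\max(|\det g|,|\det g|^{-1})^{-N'}$, because $\|g\|_G\gg\max(|\det g|,|\det g|^{-1})^{\frac{1}{2n}}$. Take the geometric mean of the two bounds with $N'$ large. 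This yields rapid decay in $\|m\|_{[G_{2r}]}$, and the same argument works for derivatives.
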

\begin{proof}
     For any $1\leq k\leq 2n$, let $P_{(k,2n-k)}$ denote the standard parabolic subgroup of $G_{2n}$ with Levi component $G_{k}\times G_{2n-k}$. Thus, we can rewrite the function $(f_{N_{2r+1,2n,\psi}})_{U_{2r+1}}$ as
    $$(f_{N_{2r+1,2n,\psi}})_{U_{2r+1}}\begin{pmatrix} a &  \\  & b \end{pmatrix}=\int_{[N_{2n-2r}]}f_{P_{(2r,2n-2r)}}\begin{pmatrix} a &  \\  & ub \end{pmatrix} \psi^{-1}(u)\mathrm{d}u.$$

    By the Iwasawa decomposition $G_n(\mathbb{A})=P_{r,n}(\mathbb{A})K$, we can rewrite $F_{r,f}(s_1,s_2)$ as
    \begin{align*}
    F_{r,f}(s_1,s_2)  = &\int_{K\times K}\int_{[G_r\times G_r]}\int_{(\mathbb{A}^{\times})^{2n-2r}}\int_{[N_{2n-2r}]} \\& (R(\iota(k_1,k_2))f)_{P_{(2r,2n-2r)}}\begin{pmatrix} \iota(m_1,m_2) &  \\  & u\cdot \textnormal{diag}(a_{r+1},b_{r+1},\dots,a_n,b_n) \end{pmatrix} \psi^{-1}(u) \\
    & \eta^{-1}(\textnormal{det}(m_1))\prod_{i=r+1}^n\eta^{-1}(a_i)(R(k_2)\Phi)(b_ne_n)|m_1|^{s_1+r-n}|m_2|^{s_2+\frac{1}{2}+r-n} \\ & \prod_{i=r+1}^n|a_i|^{s_1+2i-n-1}|b_i|^{s_2-\frac{1}{2}+2i-n}\mathrm{d}u\prod_{i=r+1}^n\mathrm{d}a_i\mathrm{d}b_i\;\mathrm{d}m_1\mathrm{d}m_2\mathrm{d}k_1\mathrm{d}k_2.
    \end{align*}
    For any $f\in \mathcal{S}_{\chi}([G])$ and $\Phi\in \mathcal{S}(\mathbb{A}^n)$, let
    \begin{align*}
        I_{r,f}(s_1,s_2)  = &\int_{[G_r\times G_r]}\int_{(\mathbb{A}^{\times})^{2n-2r}}\int_{[N_{2n-2r}]} \\& f_{P_{(2r,2n-2r)}} \begin{pmatrix} \iota(m_1,m_2) &  \\  & u\cdot \textnormal{diag}(a_{r+1},b_{r+1},\dots,a_n,b_n) \end{pmatrix} \psi^{-1}(u)   \eta^{-1}(\textnormal{det}(m_1))\prod_{i=r+1}^n\eta^{-1}(a_i)\\ &\Phi(b_ne_n) |m_1|^{s_1+r-n}|m_2|^{s_2+\frac{1}{2}+r-n} \prod_{i=r+1}^n|a_i|^{s_1+2i-n-1}|b_i|^{s_2-\frac{1}{2}+2i-n}\mathrm{d}u\prod_{i=r+1}^n\mathrm{d}a_i\mathrm{d}b_i\;\mathrm{d}m_1\mathrm{d}m_2.
    \end{align*}
    By the same proof as for \Cref{absolute convergence 3}, $F_{r,f}(s_1,s_2)$ and $I_{r,f}(s_1,s_2)$ are absolutely convergent for any $s_1,s_2\in \mathbb{C}$. Moreover, the two spaces $\mathcal{S}_{\chi}([G])$ and $\mathcal{S}(\mathbb{A}^n)$ are preserved by the right regular action of $G(\mathbb{A})$ and $G_n(\mathbb{A})$, respectively. Therefore, to prove that $F_{r,f}(s_1,s_2)$ vanishes, it suffices to prove that $I_{r,f}(s_1,s_2)$ vanishes.

    We can always find a series of compactly supported smooth functions $\{\kappa_j\}_{j\in \mathbb{N}}$ on $\mathfrak{a}_{P_{(2r,2n-2r)}}$ satisfying the following conditions:
    \begin{itemize}
        \item[(1)] For any $j\in \mathbb{N}$ and $\alpha\in \mathfrak{a}_{P_{(2r,2n-2r)}}$, $|\kappa_j(\alpha)|\leq 1$;
        \item[(2)] For any $j\in\mathbb{N}$, $\kappa_j(0)=1$;
        \item[(3)] For any $\alpha\in \mathfrak{a}_{P_{(2r,2n-2r)}}$, $\lim_{j\to \infty}\kappa_j(\alpha)=1$.
    \end{itemize}

    Define
    \begin{align*}
        I^j_{r,f}(s_1,s_2)=&\int_{[G_r\times G_r]}\int_{(\mathbb{A}^{\times})^{2n-2r}}\int_{[N_{2n-2r}]} (\kappa_j\circ H_{P_{(2r,2n-2r)}})\begin{pmatrix} \iota(m_1,m_2) &  \\  & u\cdot \textnormal{diag}(a_{r+1},b_{r+1},\dots,a_n,b_n) \end{pmatrix}\\ & f_{P_{(2r,2n-2r)}} \begin{pmatrix} \iota(m_1,m_2) &  \\  & u\cdot \textnormal{diag}(a_{r+1},b_{r+1},\dots,a_n,b_n) \end{pmatrix} \psi^{-1}(u)   \eta^{-1}(\textnormal{det}(m_1))\prod_{i=r+1}^n\eta^{-1}(a_i)\\ &\Phi(b_ne_n) |m_1|^{s_1+r-n}|m_2|^{s_2+\frac{1}{2}+r-n} \prod_{i=r+1}^n|a_i|^{s_1+2i-n-1}|b_i|^{s_2-\frac{1}{2}+2i-n}\mathrm{d}u\prod_{i=r+1}^n\mathrm{d}a_i\mathrm{d}b_i\;\mathrm{d}m_1\mathrm{d}m_2.
    \end{align*}
    Note that $I^j_{r,f}(s_1,s_2)$ is absolutely convergent because $|\kappa_j(\alpha)|\leq 1$ for any $\alpha\in \mathfrak{a}_{P_{(2r,2n-2r)}}$.

    By \cite[2.3.3]{lu2025periodsdetectingeisensteinseries}, $(\kappa_j\circ H_{P_{(2r,2n-2r)}})\cdot f_{P_{(2r,2n-2r)}}\in \mathcal{S}_{\chi}([G]_{P_{(2r,2n-2r)}})$. Let $\chi^{M_{P_{(2r,2n-2r)}}}$ denote the inverse image of $\chi$ in $\mathfrak{X}(M_{P_{(2r,2n-2r)}})$. Note that if $\chi\in \mathfrak{X}_{(\eta,\Delta^*)}(G)$, then $\chi^{M_{P_{(2r,2n-2r)}}}\subset \mathfrak{X}_{(\eta,\Delta^*)}(M_{P_{(2r,2n-2r)}})$. Moreover, the projection from $\mathfrak{X}(M_{P_{(2r,2n-2r)}})$ to $\mathfrak{X}(G_{2r})$ preserves $(\eta,\Delta^*)$-regularity. Combining with Lemmas 2.5.4 and 2.5.5 in loc. cit., the function $(\kappa_j\circ H_{P_{(2r,2n-2r)}})\cdot f_{P_{(2r,2n-2r)}}|_{[G_{2r}]}$ lies in $\mathcal{S}_{(\eta,\Delta^*)}([G_{2r}])$. By \Cref{lemma 1}, we can see that $I^j_{r,f}(s_1,s_2)=0$.

    The construction of the function series $\{\kappa_j\}_{j\in \mathbb{N}}$ guarantees that 
    $$\lim_{j\to \infty}(\kappa_j\circ H_{P_{(2r,2n-2r)}})\cdot f_{P_{(2r,2n-2r)}}\begin{pmatrix} a &  \\  & b \end{pmatrix}=f_{P_{(2r,2n-2r)}}\begin{pmatrix} a &  \\  & b \end{pmatrix}$$
    for any $\begin{pmatrix} a &  \\  & b \end{pmatrix}\in M_{P_{(2r,2n-2r)}}$. Then, the dominated convergence theorem gives
    $$I_{r,f}(s_1,s_2)=\lim_{j\to \infty}I^j_{r,f}(s_1,s_2)=0.$$
    The proof is complete.

\end{proof}

\begin{proof}[Proof of \textnormal{(3)}]
    The proof is similar to the proof of part (2). Let $U_{2r}(\mathbb{A})=\{I_{2n}+\sum_{i=1}^{2r-1}b_iE_{i,2r}|b_i\in \mathbb{A}\}\subset G(\mathbb{A})$ and $U_{r}(\mathbb{A})=\{I_{n}+\sum_{i=1}^{r-1}b_iE_{i,r}|b_i\in \mathbb{A}\}\subset G_n(\mathbb{A})$ (one can check Figure 1 for a concrete example). Using the Fourier analysis on $U_{2r}(\mathbb{A})/\iota(1\times U_r)(\mathbb{A})U_{2r}(F)$, we have
    $$\int_{[U_r]}f_{N_{2r,2n,\psi}}(\iota(1,u)g)\mathrm{d}u=(f_{N_{2r,2n,\psi}})_{U_{2r}}(g)+\sum_{\gamma\in \mathcal{P}_r(F)\backslash G_r(F)}f_{N_{2r-1,2n,\psi}}(\iota(\gamma,1)g).$$

    By \Cref{absolute convergence 3}, we can write
    $$Z'_{r,\eta}(f,\Phi,s_1,s_2)=Z_{r,\eta}(f,\Phi,s_1,s_2)+F'_{r,f}(s_1,s_2),$$
    where
    $$F'_{r,f}(s_1,s_2)=\int_{G_r(F)N_{r,n}(\mathbb{A})\backslash G_n(\mathbb{A})}\int_{G_{r-1}(F)N_{r-1,n}(\mathbb{A})\backslash G_n(\mathbb{A})}(f_{N_{2r,2n,\psi}})_{U_{2r}}(\iota(x,y))\eta^{-1}(\textnormal{det}(x))\Phi(e_ny)|x|^{s_1}|y|^{s_2+\frac{1}{2}}\mathrm{d}x\mathrm{d}y$$
    and the absolute convergence of the above expression of integral can be proved similarly as the proof for \Cref{absolute convergence 3}. Then, by \Cref{lemma 2} and the proof of \Cref{F_r=0} , we can show that $F'_{r,f}(s_1,s_2)=0$ for any $s_1,s_2\in \mathbb{C}$. The proof is complete

\end{proof}

\begin{proof}[Proof of \textnormal{(1)}]
    By \Cref{absolute convergence 3}, we can write
    \begin{align*}
        \mathcal{P}_{\eta}(f,\Phi,s_1,s_2) & =\int_{[G_n]\times [G_n]}f(\iota(x,y))\eta^{-1}(\textnormal{det}(x))\Theta(y,\Phi)|x|^{s_1}|y|^{s_2}\mathrm{d}x\mathrm{d}y \\
        & =\int_{[G_n]}\int_{[G_n]}f(\iota(x,y))\eta^{-1}(\textnormal{det}(x))\sum_{v\in F^n}\Phi(vy)|x|^{s_1}|y|^{s_2+\frac{1}{2}}\mathrm{d}x\mathrm{d}y \\
        & = \int_{[G_n]}\int_{[G_n]}f(\iota(x,y))\eta^{-1}(\textnormal{det}(x))\sum_{\gamma\in \mathcal{P}_n(F)\backslash G_n(F)}\Phi(e_n\gamma y)|x|^{s_1}|y|^{s_2+\frac{1}{2}}\mathrm{d}x\mathrm{d}y+F_{n,f}(s_1,s_2) \\ 
        &=Z_{n,\eta}'(f,\Phi,s_1,s_2)+F_{n,f}(s_1,s_2),
    \end{align*}
    where
    $$F_{n,f}(s_1,s_2)=\Phi(0)\int_{[G_n]}\int_{[G_n]}f(\iota(x,y))\eta^{-1}(\textnormal{det}(x))|x|^{s_1}|y|^{s_2+\frac{1}{2}}\mathrm{d}x\mathrm{d}y.$$
    The vanishing of $F_{n,f}(s_1,s_2)$ directly follows from \Cref{lemma 1}. The proof is complete.
\end{proof}

\begin{proof}[Proof of \Cref{main result}]
    By the unfolding identity, we can get
    $$\mathcal{P}_{\eta}(f,\Phi,s_1,s_2)=Z^{\textnormal{BF}}_{\eta}(f,\Phi,s_1,s_2)$$
    for any $f\in \mathcal{S}_{\chi}([G])$, $\Phi\in\mathcal{S}(\mathbb{A}^n)$ and $s_1,s_2\in\mathbb{C}$. Write $\tilde{f}(g)=f({^tg^{-1}})$. If $f\in \mathcal{S}_{\chi}([G])$, then $\tilde{f}\in \mathcal{S}_{\chi^{\vee}}([G])$. Now by the change of variable $(x,y)\mapsto({^tx^{-1}},{^ty^{-1}})$ and \Cref{theta series}, we have
    $$Z_{\eta}^{\textnormal{BF}}(f,\Phi,s_1,s_2)=Z_{\eta^{-1}}^{\textnormal{BF}}(\tilde{f},\hat{\Phi},-s_1,-s_2).$$
    Note that for a $(\eta,\Delta^*)$-regular cuspidal data $\chi$, $\chi^{\vee}$ is a $(\eta^{-1},\Delta^*)$-regular cuspidal data. Therefore, the integral $Z_{\eta^{-1}}^{\textnormal{BF}}(\tilde{f},\hat{\Phi},-s_1,-s_2)$ is well defined and satisfies all the previous discussion. Fix a Schwartz function $\Phi\in \mathcal{S}(\mathbb{A}^n)$ and choose a positive integer $N$. Consider the space $W=L^2_{-N,\chi}([G])^{\infty}$. If we can prove that for any $f\in W$, the functional $Z_{\eta}^{\textnormal{BF}}(f,\Phi,s_1,s_2)$ extends to an entire function on $\mathbb{C}^2$, and let $N$ vary, then the proof is complete. So it suffices to find a holomorphic extension of $Z_{\eta}^{\textnormal{BF}}(f,\Phi,s_1,s_2)$ when $f\in W$. 
    
    Now fix a positive integer $N$. There exists a positive integer $N'$ such that for any $f\in L^2_{-N,\chi}([G])^{\infty}$, $\tilde{f}\in L^2_{-N',\chi^{\vee}}([G])^{\infty}$. Also there exists another positive integer $M$ such that $L^2_{-N,\chi}([G])^{\infty}\subset \mathcal{T}_{M,\chi}([G])$ and $L^2_{-N',\chi^{\vee}}([G])^{\infty}\subset \mathcal{T}_{M,\chi^{\vee}}([G])$. Now by \Cref{absolute convergence 1}, there exists a real number $a_N$ such that for any $f\in W$, $Z_{\eta}^{\textnormal{BF}}(f,\Phi,s_1,s_2)$ and $Z_{\eta^{-1}}^{\textnormal{BF}}(\tilde{f},\hat{\Phi},s_1,s_2)$ are both absolutely convergent for any $(s_1,s_2)\in \mathcal{H}_{>a_N}\times \mathcal{H}_{>a_N}$. Let $u=s_1+s_2$ and $v=s_1-s_2$. Define the following two functionals
    $$A_+(u,v,f)=Z_{\eta}^{\textnormal{BF}}(f,\Phi,\frac{u+v}{2},\frac{u-v}{2})$$
    and
    $$A_-(u,v,f)=Z_{\eta^{-1}}^{\textnormal{BF}}(\tilde{f},\hat{\Phi},\frac{u-v}{2},\frac{u+v}{2}).$$
    Note that $A_{\pm}(u,v,f)$ are both absolutely convergent in the region $\mathcal{D}_N=\{(u,v)\in \mathbb{C}^2|\;\textnormal{Re}(u)>|\textnormal{Re}(v)|+2a_N\}$. And the functional equation has become
    $$A_+(u,v,f)=A_-(-u,v,f).$$
    To prove that $Z^{\textnormal{BF}}_{\eta}(f,\Phi,s_1,s_2)$ admits an analytic continuation to $\mathbb{C}^2$ for any $f\in W$, we only need to prove that $A_+(u,v,f)$ extends to an entire function on $\mathbb{C}^2$ for any $f\in W$.

    Note that $\mathcal{S}_{\chi}([G])$ is dense in $W$. For every fixed $v\in \mathbb{C}$, applying \cite[Corollary A.0.11.2]{beuzart2022global} to $W$, $\mathcal{S}_{\chi}([G])$ and $A_{\pm}(u,v,\cdot)$, we can get holomorphic extensions of $A_{\pm}(u,v,f)$, with values in the space of continuous functionals on $W$. Denote the resulting continuation by $\tilde{A}_{\pm}(u,v,f)$. We know the following properties of $\tilde{A}_{\pm}$:
    \begin{itemize}
        \item For every fixed $v\in \mathbb{C}$ and $f\in W$, the functions $u\mapsto \tilde{A}_{\pm}(u,v,f)$ are holomorphic and of finite order in vertical strips.
        \item For every fixed pair $(u,v)\in \mathbb{C}^2$, the functionals $f\mapsto \tilde{A}_{\pm}(u,v,f)$ is continuous on $W$.
        \item For any $u,v\in \mathbb{C}$ and $f\in W$, we have the following functional equation:
        $$\tilde{A}_+(u,v,f)=\tilde{A}_{-}(-u,v,f).$$
    \end{itemize}

    Now, fix $f\in W$, and choose a relatively compact open subset $V\subset \mathbb{C}$. Choose a number $c_V$ such that $c_V>2a_N+\sup_{v\in V}|\textnormal{Re}(v)|$. For any $u\in \mathcal{H}_{>c_V}$, define a function $B_+(u)$ on $V$ by
    $$B_+(u)(v)=A_+(u,v,f).$$
    Similarly, define
    $$B_-(u)(v)=A_-(u,v,f).$$
    By \Cref{absolute convergence 1}, the functions $B_{\pm}(u)$ are holomorphic functions for $v\in V$. Moreover, the $\mathcal{O}(V)$-valued functionals $u\mapsto B_{\pm}(u)$ are holomorphic on $\mathcal{H}_{>c_V}$, and of finite order (actually of order $0$ by absolute convergence) in vertical strips, where $\mathcal{O}(V)$ denotes the Fr\'echet space of holomorphic functions on $V$. 

    For any $v\in V$, define the continuous functional $\lambda_v$ on $\mathcal{O}(V)$ by $\lambda_v(h)=h(v)$ for any $h\in \mathcal{O}(V)$. Denote the vector space generated by all such $\lambda_v$'s by $H_V$. Then $H_V$ is a total subspace since if $h\in\mathcal{O}(V)$ satisfies that $\lambda_v(h)=0$ for any $v\in V$, then $h=0$.

    Note that for any $v\in V$, the functions $\lambda_v\circ B_{\pm}(u)=A_{\pm}(u,v,f)$ extends to holomorphic functions $\tilde{A}_{\pm}(u,v,f)$ satisfying $\tilde{A}_+(u,v,f)=\tilde{A}_{-}(-u,v,f)$. Applying \cite[Lemma A.0.10.1]{beuzart2022global} to $\mathcal{O}(V)$, $H_V$ and $B_{\pm}$, we get entire continuations of $B_{\pm}$. We denote them by $\tilde{B}_{\pm}$. In other words, $\tilde{B}_+(u)$ and $\tilde{B}_-(u)$ are holomorphic functions on $V$ for any $u\in \mathbb{C}$. Since for any $v\in V$, $\lambda_v$ is a continuous functional on $\mathcal{O}(V)$, and the function $u\mapsto \tilde{B}_+(u)$ is entire as an $\mathcal{O}(V)$-valued map, we have that for any fixed $v\in V$, the function 
    $$u\mapsto \lambda_v\circ \tilde{B}_+(u)=\tilde{B}_+(u)(v)$$
    is entire. Moreover, fix $v\in V$, and for $u\in \mathcal{H}_{>c_V}$, we have 
    $$\tilde{B}_+(u)(v)=B_+(u)(v)=A_+(u,v,f)=\tilde{A}_+(u,v,f).$$
    By the uniqueness of analytic continuation, we can deduce that $\tilde{B}(u)(v)$ is the same function as $\tilde{A}(u,v,f)$ for any $(u,v)\in \mathbb{C}\times V$. Thus, by Hartogs' theorem,  $\tilde{A}_+(u,v,f)$ is holomorphic for $(u,v)\in \mathbb{C}\times V$. Let $V$ vary. We prove that $A_+(u,v,f)$ extends to an entire function on $\mathbb{C}^2$ (on the overlap of two choices of $V$, the constructions agree because they agree in the original region and hence everywhere by uniqueness of analytic continuation).
    This proves (2). Let $P^*_{\eta}(f,\Phi)=Z_{\eta}^{\textnormal{BF}}(f,\Phi,0,0)$. This proves (1). The proof is complete.

\end{proof}

\subsection{Euler decomposition}

For future application, we will choose $f=E(\varphi)$, the Eisenstein series of a function $\varphi\in \Pi=\textnormal{Ind}_{P(\mathbb{A})}^{G(\mathbb{A})}\pi$, where $\pi$ is a central twist of $\chi$. Then $f\in \mathcal{T}_{\chi}([G])$.

Let $S$ be a sufficiently large set of places of $F$, that we assume to contain Archimedean places as well as the places where $\Pi$, $\varphi$, $\eta$ and $\Phi$ is ramified. We then have a decomposition $W_{E(\varphi)}=W_{E(\varphi),S}W_{E(\varphi)}^S$ such that $W_{E(\varphi)}^S(1)=1$. We also write $\Phi=\Phi_S\Phi^S$, where $\Phi^S$ is the characteristic function of $(\mathcal{O}_F^S)^n$ and $\Phi_S\in \mathcal{S}(F_S^n)$.

By the unramified computation for the Bump--Friedberg period in \cite[Proposition 4.4]{leslie2025unitary}, we have
\begin{equation}\label{L function BF}
    Z_{\eta}^{\textnormal{BF}}(E(\varphi),\Phi,s_1,s_2)=(\Delta_H^{S,*})^{-1}\tilde{Z}_{\eta,S}^{\textnormal{BF}}(W_{E(\varphi),S},\Phi_S,s_1,s_2)L^S(s_1+\frac{1}{2}, \Pi\otimes \eta^{-1})L^S(s_1+s_2+\frac{1}{2}, \Pi, \wedge^2\otimes \eta^{-1}).
\end{equation}
where $\Delta_H^{S,*}$ denotes the leading coefficient of the partial $L$-function $L^S_H(s)$ at $s=0$ (see \cite[\S 2.2.1]{lu2025periodsdetectingeisensteinseries} for details) and $\tilde{Z}_{\eta,S}^{\textnormal{BF}}(W_{E(\varphi),S},\Phi_S,s_1,s_2)$ is defined to be the meromorphic continuation of
$$\int_{N_n(F_S)\backslash G_n(F_S)}\int_{N_n(F_S)\backslash G_n(F_S)}W_{E(\varphi),S}(\iota(x,y))\eta_S^{-1}(\textnormal{det}(x))\Phi_S(e_ny)|x|_S^{s_1}|y|_S^{s_2+\frac{1}{2}}\mathrm{d}x\mathrm{d}y.$$
Indeed, by \cite[Proposition 4.4]{leslie2025unitary}, the above integral is absolutely convergent on the region $\{(s_1,s_2)\in \mathbb{C}^2|\; \textnormal{Re}(s_1)>0, \textnormal{Re}(s_2)>\textnormal{Re}(s_1)+1\}$, and \Cref{L function BF} holds in this region. Since its left hand-side is entire by \Cref{main result} and the two partial $L$-functions appearing on its right-hand side admit meromorphic continuation, \Cref{L function BF} defines a meromorphic continuation of the above integral.

\section{An augmented Bump--Friedberg period}\label{augmented}

In this section, we will introduce an augmented Bump--Friedberg period. All statements in this section will be very similar to \Cref{twisted BF}, and all proofs will be brief.

\subsection{Statements of the main results}

Throughout this section, we fix a positive integer $n$. Let $G=G_1\times G_{2n}$, $H=G_n\times G_n$ and we embed $H$ into $G$ by $\nu(x,y)=(\textnormal{det}(x)^{-1},\iota(x,y))$.

We define a bilinear functional $\mathcal{S}([G])\times \mathcal{S}(\mathbb{A}^n)\times \mathbb{C}^2\mapsto \mathbb{C}$ by
$$\mathcal{P}^a(f,\Phi,s_1,s_2)=\int_{[G_n]\times [G_n]}f(\nu(x,y))\Theta(y,\Phi)|x|^{s_1}|y|^{s_2}.$$
By the same argument as in \Cref{twisted BF period}, the integral defining $\mathcal{P}^a$ is absolutely convergent and defines a continuous bilinear functional on $\mathcal{S}([G])\times \mathcal{S}(\mathbb{A}^n)$ for any $s_1,s_2\in\mathbb{C}$. Moreover, for any $f\in \mathcal{S}([G])$ and $\Phi\in \mathcal{S}(\mathbb{A}^n)$, $\mathcal{P}^a(f,\Phi,s_1,s_2)$ defines an entire function on $\mathbb{C}^2$ which is bounded on vertical strips.

Let $\psi$ denote the same character on $N_{2n}(\mathbb{A})$ as in \Cref{twisted BF}. For any $f\in \mathcal{T}([G])$, we define the corresponding Fourier coefficient
$$f_{N_{r,2n,\psi}}(a,g)=\int_{[N_{r,2n}]}f(a,ug)\psi^{-1}(u)\mathrm{d}u$$
and its Whittaker function
$$W_f(a,g)=f_{N_{2n,\psi}}(a,g)=\int_{[N_{2n}]}f(a,ug)\psi^{-1}(u)\mathrm{d}u.$$

Let $f\in\mathcal{T}([G])$, $\Phi\in \mathcal{S}(\mathbb{A}^n)$ and $s_1,s_2\in \mathbb{C}$. Define the following integral:
$$Z^a(f,\Phi,s_1,s_2)=\int_{N_n(\mathbb{A})\backslash G_n(\mathbb{A})}\int_{N_n(\mathbb{A})\backslash G_n(\mathbb{A})}W_f(\nu(x,y))\Phi(e_ny)|x|^{s_1}|y|^{s_2+\frac{1}{2}}\mathrm{d}x\mathrm{d}y.$$
We have the following lemma, which provides the absolute convergence of the above expression of integral:

\begin{lemma}
    For any $N\in \mathbb{N}$, there exists $c_N>0$ such that
    \begin{itemize}
        \item[(1)] For any $f\in \mathcal{T}_N([G])$ and $\Phi\in \mathcal{S}(\mathbb{A}^n)$, the expression defining $Z^{a}(f,\Phi,s_1,s_2)$ converges absolutely for $s_1,s_2\in \mathcal{H}_{>c_N}$, and defines a holomorphic function on $\mathcal{H}_{>c_N}\times \mathcal{H}_{>c_N}$.
        \item[(2)] The functional $f\mapsto Z^a(f,\Phi,s_1,s_2)$ is continuous on $\mathcal{T}_N([G])$ for any $s_1,s_2\in \mathcal{H}_{>c_N}$ and $\Phi\in \mathcal{S}(\mathbb{A}^n)$.
    \end{itemize}
\end{lemma}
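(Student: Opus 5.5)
The plan is to copy the proof of \Cref{absolute convergence 1} almost word for word. The only change is the extra $G_1$-coordinate $\det(x)^{-1}$ in $\nu(x,y)$, which will be absorbed into the $|x|^{s_1}$ exponents.

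\textbf{Reduction to a torus integral.} Use the Iwasawa decomposition $H(\mathbb{A})=B_H(\mathbb{A})K_H$ and write $x=D(a_1,\dots,a_n)k_1$, $y=D(b_1,\dots,b_n)k_2$. Then $\nu(x,y)=\bigl(\prod_i a_i^{-1}\det(k_1)^{-1},\ \iota(D(a),D(b))\iota(k_1,k_2)\bigr)$. The goal becomes the convergence of the analogue of \Cref{BF1}, with the same modulus character $\delta_{B_H}$ and with no twist $\eta$, so that $a=0$.

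\textbf{Bounding the Whittaker function.} Regard $W_f$ as the Fourier coefficient of $f\in\mathcal{T}_N([G])$ along the maximal unipotent subgroup $1\times N_{2n}$ of $G=G_1\times G_{2n}$, whose Levi is the torus $G_1\times T_{2n}$. Applying \Cref{important lemma}(2) to $G$ gives, for every $N_1\geq0$,
$$|W_f(\nu(x,y))|\ll \prod_{i=1}^n\Bigl\|\tfrac{a_i}{b_i}\Bigr\|_{\mathbb{A}}^{-N_1}\prod_{i=1}^{n-1}\Bigl\|\tfrac{b_i}{a_{i+1}}\Bigr\|_{\mathbb{A}}^{-N_1}\Bigl\|\prod_i a_i\Bigr\|_{G_1}^{N}\prod_{i=1}^n\|a_i\|_{G_1}^N\|b_i\|_{G_1}^N.$$
The extra factor comes from $\|\cdot\|_{G}\sim\|\cdot\|_{G_1}\|\cdot\|_{G_{2n}}$, where the $G_1$-component contributes $\|\prod a_i^{-1}\|_{G_1}$. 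Using $\|\prod a_i\|_{G_1}\ll\prod_i\|a_i\|_{G_1}$, this factor is dominated by $\prod_i\|a_i\|_{G_1}^{N}$. So the same bound as before holds with $N$ replaced by $2N$.

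\textbf{Decay in the torus variables.} Combine this with $|\Phi(b_ne_nk)|\ll\|b_n\|_{\mathbb{A}}^{-N_1}$. Exactly as in the earlier proof, the chain of ratios $a_1/b_1,\ b_1/a_2,\dots,a_n/b_n$ together with $b_n$ yields decay $\prod_i\|a_i\|_{\mathbb{A}}^{-N_2}\|b_i\|_{\mathbb{A}}^{-N_2}$ for any $N_2$. The integral is then bounded by the analogue of \Cref{BF2}: a product of one-dimensional integrals of the form
$$\int_{\mathbb{A}^\times}\|t\|_{G_1}^{2N}\|t\|_{\mathbb{A}}^{-N_2}|t|^{\mathrm{Re}(s)+\text{const}}\,\mathrm{d}t.$$
Using $\|t\|_{G_1}\ll\max(|t|^M,|t|^{-M})$ and \Cref{c>1} (equivalently \cite[Lemma 2.2.5]{lu2025periodsdetectingeisensteinseries}), there is $c_N$ such that these integrals converge locally uniformly on $\mathcal{H}_{>c_N}\times\mathcal{H}_{>c_N}$. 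This gives absolute convergence and holomorphy, which is part (1).

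\textbf{Continuity.} The implied constant in the Whittaker bound is a continuous seminorm on $\mathcal{T}_N([G])$ by \Cref{important lemma}(2), which gives part (2).

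The only step needing any care is checking that the $G_1$-height of $\det(x)^{-1}$ is polynomially controlled by the torus heights, and that \Cref{important lemma} applies to $G_1\times G_{2n}$. Both are straightforward.
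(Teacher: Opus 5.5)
Your proposal is correct and follows essentially the same route as the paper, which also reduces to the proof of Lemma~3.2.1 and identifies the only new input as the extra factor $\|\det(x)^{-1}\|_{G_1}^N=\|\det(x)\|_{G_1}^N$ coming from the $G_1$-coordinate of $\nu$ in the Whittaker bound. The one minor variation is that you absorb this factor into $\prod_i\|a_i\|_{G_1}^N$ by submultiplicativity of the height, while the paper bounds it directly via $\|t\|_{G_1}\ll\max(|t|^M,|t|^{-M})$; either works.
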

\begin{proof}
    The proof is very similar to the proof of \Cref{absolute convergence 1}. The only difference is the term $\|m\|_{M_P}^{-N}$ appearing in \Cref{important lemma}(2). It adds an extra $\|\textnormal{det}(x)^{-1}\|_{G_1}^N=\|\textnormal{det}(x)\|_{G_1}^N$ in the estimate of $W_f(\nu(x,y))$. Note that there exists $M>0$ such that $\|t\|_{G_1}\ll \max(|t|^M,|t|^{-M})$ for any $t\in \mathbb{A}^{\times}$, so the extra term will not affect the absolute convergence. We omit the details.
\end{proof}

\begin{defn}
    Let $\chi=\eta\boxtimes \chi'$ be a cuspidal datum of $G$, where $\eta$ is a Hecke character trivial on $A_{G_1}^{\infty}$ and $\chi'$ is a cuspidal datum of $G_{2n}$. We say $\chi$ is $\Delta_a^*$-regular, if $\chi'$ is $(\eta,\Delta^*)$-regular.
\end{defn}

Let $\mathfrak{X}_{\Delta_a^*}(G)\subset \mathfrak{X}(G)$ denote the set of $\Delta_a^*$-regular cuspidal datum. We write $\mathcal{S}_{\Delta_a^*}([G])$ (resp. $\mathcal{T}_{\Delta_a^*}([G])$) for $\mathcal{S}_{\mathfrak{X}_{\Delta_a^*}(G)}([G])$ (resp. $\mathcal{T}_{\mathfrak{X}_{\Delta_a^*}(G)}([G])$).

We now state the main result of this section:
\begin{thm}\label[thm]{main result 2}
    Let $\chi$ be a $\Delta_a^*$-regular cuspidal datum. Fix $\Phi\in \mathcal{S}(\mathbb{A}^n)$. We have the following statements:
    \begin{itemize}
        \item[(1)] The continuous functional $\mathcal{P}^a(\cdot,\Phi,0,0)$ on $\mathcal{S}_{\chi}([G])$ extends by continuity to a continuous functional $\mathcal{P}^{a,*}(\cdot,\Phi)$ on $\mathcal{T}_{\chi}([G])$.
        \item[(2)] For every $f\in \mathcal{T}_{\chi}([G])$, the function $(s_1,s_2)\mapsto Z^a(f,\Phi,s_1,s_2)$, a priori defined on the region $\mathcal{H}_{>c_N}\times \mathcal{H}_{>c_N}$, extends to an entire function on $\mathbb{C}^2$. Moreover, we have
        $$\mathcal{P}^{a,*}(f,\Phi)=Z^a(f,\Phi,0,0).$$
    \end{itemize}
\end{thm}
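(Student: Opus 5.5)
The plan is to reduce \Cref{main result 2} to \Cref{main result} by Fourier expansion in the $G_1$-variable, and then redo the unfolding and functional-equation argument essentially verbatim.

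\emph{Step 1: reduction on the Schwartz side.} Write $\chi=\eta\boxtimes\chi'$. For $f\in\mathcal{S}_{\chi}([G])$ with $G=G_1\times G_{2n}$, the function $a\mapsto f(a,g)$ on $[G_1]$ lies in the $\eta$-component, up to twists in the $A_{G_1}^\infty$-direction. Concretely, decompose $[G_1]=A_{G_1}^\infty\times[G_1]^1$ and expand $f$ along the compact group $[G_1]^1$. Only characters $\eta'$ with $\eta'|_{[G_1]^1}=\eta|_{[G_1]^1}$ survive, since pairing against other characters lands in $\mathcal{S}_{\chi''}$ with $\chi''\neq\chi$. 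This gives
\begin{equation*}
f(a,g)=\int_{\mathbb{R}}\eta(a)\,|a|^{it}f_t(g)\,\mathrm{d}t ,
\end{equation*}
where $f_t\in\mathcal{S}_{\chi'}([G_{2n}])$ depends rapidly decreasingly and smoothly on $t$; this is Mellin inversion on $A_{G_1}^\infty$ along a vertical line. Substituting $a=\det(x)^{-1}$ yields
\begin{equation*}
\mathcal{P}^a(f,\Phi,s_1,s_2)=\int_{\mathbb{R}}\mathcal{P}_{\eta}(f_t,\Phi,s_1-it,s_2)\,\mathrm{d}t .
\end{equation*}
By \Cref{remark 1}, $\chi'$ is $(\eta|\cdot|^{it},\Delta^*)$-regular for every $t$. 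Hence \Cref{unfolding twisted BF} applies to each $f_t$. Integrating back in $t$, and justifying the interchange by \Cref{absolute convergence 3}, we obtain the unfolding identity $\mathcal{P}^a(f,\Phi,s_1,s_2)=Z^a(f,\Phi,s_1,s_2)$ for all $f\in\mathcal{S}_\chi([G])$ and all $(s_1,s_2)$.

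\emph{Alternative route for Step 1.} One can instead rerun the proof of \Cref{unfolding twisted BF} directly. The only needed inputs are the analogues of \Cref{lemma 1}, \Cref{lemma 2} and \Cref{F_r=0} for $\nu$. In the pseudo-Eisenstein computation, the $G_1$-factor simply contributes the character $\eta^{-1}(\det x)$ after integrating over $[G_1]^1$, exactly as $\xi_1$ did. I expect to present this version, since it avoids the spectral bookkeeping.

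\emph{Step 2: functional equation and continuation.} Set $\tilde f(a,g)=f(a^{-1},{}^tg^{-1})$. The substitution $(x,y)\mapsto({}^tx^{-1},{}^ty^{-1})$ together with \Cref{theta series} gives
\begin{equation*}
Z^a(f,\Phi,s_1,s_2)=Z^a(\tilde f,\hat\Phi,-s_1,-s_2).
\end{equation*}
Here $\tilde f$ lies in $\mathcal{S}_{\chi^\vee}$, and $\chi^\vee=\eta^{-1}\boxtimes\chi'^\vee$ is again $\Delta_a^*$-regular because $\chi'^\vee$ is $(\eta^{-1},\Delta^*)$-regular. From here the argument of the proof of \Cref{main result} carries over word for word. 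We use the convergence lemma for $Z^a$ on $\mathcal{T}_N$ to get absolute convergence on $\mathcal{H}_{>c_N}^2$, and pass to $W=L^2_{-N,\chi}([G])^\infty$ with the variables $u=s_1+s_2$, $v=s_1-s_2$. Applying \cite[Corollary A.0.11.2]{beuzart2022global} for each fixed $v$, then \cite[Lemma A.0.10.1]{beuzart2022global} with $\mathcal{O}(V)$-valued functions, and finally Hartogs' theorem, gives entire continuation on $\mathbb{C}^2$. Defining $\mathcal{P}^{a,*}(f,\Phi)=Z^a(f,\Phi,0,0)$ then proves both (1) and (2).

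\emph{Main obstacle.} The main difficulty is the vanishing of the degenerate terms in Step 1, that is, the analogue of \Cref{claim 1} for the embedding $\nu$. One must check that after the double-coset decomposition $P(F)\backslash G(F)/H(F)$, the $G_1$-component of $\phi$ contributes exactly a character $\eta^{-1}|\cdot|^{\lambda}$ on $\det(m|_{V^+_{s_w}})$. Conditions (1)–(3) of \Cref{definition} must then apply with $\eta$ replaced by this twist, and \Cref{remark 1} guarantees they still hold. The constant-term step in \Cref{F_r=0} also needs $(\kappa_j\circ H)\,f_{P}$ restricted to $[G_1]\times[G_{2r}]$ to remain $\Delta_a^*$-regular, which follows by the same Lemmas 2.5.4–2.5.5 of \cite{lu2025periodsdetectingeisensteinseries}.
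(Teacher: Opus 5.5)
Your proposal is correct and is essentially the paper's proof. The paper follows your ``alternative route'': it reruns the unfolding of \Cref{unfolding twisted BF} for $\nu$, and proves the needed vanishing (\Cref{lemma 5} and its odd-size analogue) by the same Fourier inversion on $A_{G_1}^{\infty}$ that you use in your first version of Step 1, writing the $G_1$-component as $\eta\cdot(\rho\circ H_{G_1})$ and reducing to \Cref{lemma 1} and \Cref{lemma 2}; your Step 2 is then verbatim the paper's argument, and your first version of Step 1 simply applies that inversion once to the whole period, so that \Cref{unfolding twisted BF} for each $f_t$ at $s_1-it$ gives $\mathcal{P}^a=Z^a$ directly, which is a slightly more economical packaging of the same idea.
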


The proof will be given in the next subsection.

\subsection{Proof of \Cref{main result 2}}

We need to introduce two more zeta integrals. For $f\in \mathcal{S}([G])$, $\Phi\in\mathcal{S}(\mathbb{A}^n)$ and $1\leq r\leq n$, we put
$$Z_{r}^a(f,\Phi,s_1,s_2)=\int_{\mathcal{P}_r(F)N_{r,n}(\mathbb{A})\backslash G_n(\mathbb{A})}\int_{G_{r-1}(F)N_{r-1,n}(\mathbb{A})\backslash G_n(\mathbb{A})}f_{N_{2r-1,2n,\psi}}(\nu(x,y))\Phi(e_ny)|x|^{s_1}|y|^{s_2+\frac{1}{2}}\mathrm{d}x\mathrm{d}y.$$
and
$$Z_{r}^{'a}(f,\Phi,s_1,s_2)=\int_{G_r(F)N_{r,n}(\mathbb{A})\backslash G_n(\mathbb{A})}\int_{\mathcal{P}_{r}(F)N_{r,n}(\mathbb{A})\backslash G_n(\mathbb{A})}f_{N_{2r,2n,\psi}}(\nu(x,y))\Phi(e_ny)|x|^{s_1}|y|^{s_2+\frac{1}{2}}\mathrm{d}x\mathrm{d}y.$$
Note that $Z_{1}^a(f,\Phi,s_1,s_2)=Z^a(f,\Phi,s_1,s_2)$. We have the following lemma:
\begin{lemma}
    For any $f\in \mathcal{S}([G])$, $\Phi\in\mathcal{S}(\mathbb{A}^n)$, $s_1,s_2\in \mathbb{C}$ and $1\leq r\leq n$, the expressions defining $Z_{r}^a(f,\Phi,s_1,s_2)$ and $Z_{r}^{'a}(f,\Phi,s_1,s_2)$ are absolutely convergent.
\end{lemma}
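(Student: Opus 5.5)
The plan is to mimic the proof of \Cref{absolute convergence 3} essentially verbatim. The only change is that the argument of $f$ now carries the extra $G_1$-coordinate $\det(x)^{-1}$.

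Fix $f\in\mathcal{S}([G])$ and $\Phi\in\mathcal{S}(\mathbb{A}^n)$. For $Z_r^a$ we use the Iwasawa decompositions $G_n(\mathbb{A})=P_{r,n}(\mathbb{A})K$ and $G_n(\mathbb{A})=P_{r-1,n}(\mathbb{A})K$ and write $x=D(m_1,a_{r+1},\dots,a_n)k_1$ and $y=D(m_2,b_r,\dots,b_n)k_2$. This reduces the problem to the analogue of \Cref{BF3}, with $f_{N_{2r-1,2n,\psi}}(\iota(\cdot,\cdot))$ replaced by
$$f_{N_{2r-1,2n,\psi}}\bigl((\det m_1)^{-1}\textstyle\prod_{i>r}a_i^{-1},\ \iota(m^1k_1,m^2k_2)\bigr)$$
and with no $\eta$-factor, so that $a=0$. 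Note that $\det(k_1)$ is a unit, and the $K$-part of the $G_1$-coordinate is absorbed into the sup over $K$.

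Next we apply \Cref{important lemma}(1) to $G=G_1\times G_{2n}$ with parabolic $G_1\times P_{2r-1,2n}$, whose Levi is $G_1\times M_{P_{2r-1,2n}}$. The $\operatorname{Ad}^*$-term is exactly as before, since $N$ lives in the $G_{2n}$-factor. The modulus character is also the same, because $G_1$ contributes trivially. The only new ingredient is the height
$$\|(t,\iota(m^1,m^2))\|_{M}\sim \|t\|_{G_1}\,\|\iota(m^1,m^2)\|_{M_{P_{2r-1,2n}}},$$
which appears with a negative power $-N_2$. Since $\|t\|_{G_1}\ge 1$, we may simply drop it and bound the integrand by the same product of integrals \Cref{BF4}--\Cref{BF7} (with $a=0$). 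From there, the choice of $N_2,N_3$ and the argument via \cite[Corollary 2.3.4]{lu2025periodsdetectingeisensteinseries} and \Cref{c>1} go through unchanged. This gives absolute convergence when $\textnormal{Re}(s_1)$ is large and $|\textnormal{Re}(s_1)-\textnormal{Re}(s_2)|$ is fixed.

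To pass to arbitrary $(s_1,s_2)$, we use the same shift trick as in \Cref{absolute convergence 3}, applied to the $G_{2n}$-coordinate: set $f'(t,g)=f(t,g)|g|^{-b}$, which stays in $\mathcal{S}([G])$. Since $|\iota(x,y)|=|x||y|$, we get $Z_r^a(f,\Phi,s_1,s_2)=Z_r^a(f',\Phi,s_1+b,s_2+b)$. Alternatively one could twist by $|t|^{\pm b}$ in the $G_1$-variable, which shifts only $s_1$; either choice works. The case of $Z_r^{'a}$ is identical, using the parabolic $P_{2r,2n}$ and the decompositions with respect to $P_{r,n}$ on both factors.

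The only step needing care is checking that the extra $G_1$-coordinate does not spoil \Cref{important lemma}(1), in particular that the Iwasawa/$K$-reduction and the height factorization on the product Levi are legitimate. Since that factor enters with a decaying exponent, I expect this to be harmless. Everything else is a routine repetition of the earlier computation.
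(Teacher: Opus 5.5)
Your proposal is correct and is essentially the paper's proof. The only new ingredient is the factor $\|\det(x)^{-1}\|_{G_1}^{-N_2}$ that comes from the Levi height in \Cref{important lemma}(1) for $G_1\times P_{2r-1,2n}$ (resp.\ $G_1\times P_{2r,2n}$); you discard it because it is at most $1$, exactly as the paper does, and your twist $f'(t,g)=f(t,g)|g|^{-b}$ is the right analogue of the shift in \Cref{absolute convergence 3}.

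One correction to your aside. Twisting by $|t|^{\pm b}$ alone does not work, because it moves only $s_1$. The region obtained in your first step also forces $\textnormal{Re}(s_2)$ to be large, since the $m_2$- and $b_i$-exponents must be positive with $N_2$ bounded below. So you genuinely need the $|g|^{-b}$ twist, possibly combined with the $|t|$-twist.
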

\begin{proof}
    The proof is similar to the proof of \Cref{absolute convergence 3}. The only difference is the $\|m\|_{M_P}^{-N}$ appearing in \Cref{important lemma}(1). It adds an extra $\|\textnormal{det}(x)^{-1}\|_{G_1}^{-N_2}=\|\textnormal{det}(x)\|_{G_1}^{-N_2}$ in the estimate of $f_{N_{k,2n,\psi}}(\nu(x,y))$. Since $\|\textnormal{det}(x)\|_{G_1}\geq 1$, we can directly ignore this term. We omit the details.
\end{proof}

Then we have the unfolding equation:

\begin{prop}\label[prop]{unfolding 1}
    Let $\chi\in \mathfrak{X}_{\Delta_a^*}(G)$. Then for any $(f,\Phi)\in \mathcal{S}_{\chi}([G])\times \mathcal{S}(\mathbb{A}^n)$ and $s_1,s_2\in\mathbb{C}$, we have
    $$\mathcal{P}^a(f,\Phi,s_1,s_2)=Z_{n}^{'a}(f,\Phi,s_1,s_2)=Z_{n}^a(f,\Phi,s_1,s_2)=\cdots=Z_{1}^{'a}(f,\Phi,s_1,s_2)=Z_{1}^a(f,\Phi,s_1,s_2).$$
\end{prop}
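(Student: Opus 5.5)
The plan is to transcribe the proof of \Cref{unfolding twisted BF} word for word, with $\iota$ replaced by $\nu$ and the twisting character $\eta^{-1}(\det x)$ replaced by the $G_1$-component of $f$. As before, the argument splits into three steps:
\begin{itemize}
\item[(1)] $\mathcal{P}^a=Z_n^{'a}$;
\item[(2)] $Z_{r+1}^a=Z_r^{'a}$ for $1\leq r\leq n-1$;
\item[(3)] $Z_r^{'a}=Z_r^a$ for $1\leq r\leq n$.
\end{itemize}
All integrals involved converge absolutely by the lemma just proved, so the interchanges of sums and integrals are justified.

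The Fourier expansions only involve the $G_{2n}$-variable: the subgroups $U_{2r+1}$, $U_{2r}$, $\iota(U_{r+1}\times 1)$ and $\iota(1\times U_r)$ all lie in $1\times G_{2n}$. Moreover $\det$ is trivial on the unipotent elements of $\iota(U_{r+1}\times 1)$, so the first coordinate $\det(x)^{-1}$ of $\nu(x,y)$ is unchanged by these translations. The expansions of steps (2) and (3) therefore go through verbatim. Each produces the next zeta integral plus an error term $F^a_{r,f}$ (resp. $F^{'a}_{r,f}$) built from the constant term $(f_{N_{2r+1,2n,\psi}})_{U_{2r+1}}$ (resp. $(f_{N_{2r,2n,\psi}})_{U_{2r}}$). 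For step (1), we expand $\Theta(y,\Phi)$ over $v\in F^n$. The nonzero vectors give $Z_n^{'a}$, and the term $v=0$ gives
$$\Phi(0)\int_{[G_n]\times[G_n]}f(\nu(x,y))|x|^{s_1}|y|^{s_2+\frac12}\,\mathrm{d}x\,\mathrm{d}y.$$

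The key reduction, and the step needing the most care, is the vanishing of these error terms. We reduce to \Cref{lemma 1}, \Cref{lemma 2} and \Cref{F_r=0} by Fourier decomposition in the $G_1$-variable. Write $\chi=\eta\boxtimes\chi'$. Since $f\in\mathcal{S}_\chi([G])$, the function $f$ transforms under $[G_1]^1$ by $\eta$. Hence, after fixing $a\in A_{G_1}^\infty$ (or simply decomposing along $G_1(\mathbb{A})=[G_1]^1\times A_{G_1}^\infty$ and using the Mellin transform in the $A^\infty$-direction), the function $g\mapsto f(t,g)$ takes the form $\eta(t)\,|t|^{u}$ times a function $f_u$ in $\mathcal{S}_{\chi'}([G_{2n}])$. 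Substituting $t=\det(x)^{-1}$, each error integral becomes an integral of exactly the type appearing in \Cref{lemma 1}, \Cref{lemma 2} and the proof of \Cref{F_r=0}. The twisting character is $\eta^{-1}(\det x)|x|^{s_1-u}$, and $f$ is replaced by $f_u$.

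To make this rigorous, we approximate $f$ by finite sums $\sum\phi_j\otimes f_j$, with $\phi_j\in\mathcal{S}_\eta([G_1])$ and $f_j\in\mathcal{S}_{\chi'}([G_{2n}])$. Such sums are dense in $\mathcal{S}_\chi([G])$, and the error functionals are continuous on $\mathcal{S}([G])$ by the absolute convergence estimates. For a pure tensor, we write
$$\phi(t)=\eta(t)\int_{\mathrm{Re}(u)=\sigma}\hat\phi(u)|t|^{u}\,\mathrm{d}u$$
by Mellin inversion on $A_{G_1}^\infty\cong\mathbb{R}_{>0}$. We then interchange the $u$-integral with the period integrals; this is justified by absolute convergence uniformly in vertical lines, since the convergence lemma holds for all $s_1$. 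This reduces the claim to the vanishing, for every $u$, of the twisted periods for $f_j$ at $(s_1-u,s_2)$.

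Since $\chi'$ is $(\eta,\Delta^*)$-regular, these vanish by \Cref{lemma 1} in step (1). In steps (2) and (3), they vanish by the argument of \Cref{F_r=0}: truncation by $\kappa_j$, passage to the constant term along $P_{(2r,2n-2r)}$ or $P_{(2r+1,2n-2r-1)}$, heredity of $(\eta,\Delta^*)$-regularity to Levi subgroups, then \Cref{lemma 1} or \Cref{lemma 2}. Note that $(\eta,\Delta^*)$-regularity is insensitive to twisting $\eta$ by $|\cdot|^{u}$ (\Cref{Remark 1}), so the shift by $u$ is harmless. The main obstacle I expect is justifying the tensor-product density and the Mellin interchange. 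If that becomes awkward, I would instead fix the $G_1$-coordinate and treat $t\mapsto f(t,\cdot)$ as a family in $\mathcal{S}_{\chi'}([G_{2n}])$. Integrating over $x$ with $\det x$ in a fixed fibre is not directly covered by \Cref{lemma 1}, so the Mellin route is preferred.
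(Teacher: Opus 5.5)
Your proposal is correct and is essentially the paper's argument: the unfolding is transcribed from \Cref{unfolding twisted BF}, and the degenerate terms are killed by density of pure tensors $\phi\otimes f'$ (with $\phi\in\mathcal{S}_{\eta}([G_1])$, $f'\in\mathcal{S}_{\chi'}([G_{2n}])$) followed by Fourier inversion along $A_{G_1}^{\infty}$, which rewrites $\phi(\det(x)^{-1})$ as a superposition of $\eta^{-1}(\det x)|x|^{-it}$ and reduces everything to the $(\eta,\Delta^*)$-regular vanishing results of \Cref{twisted BF}; this is exactly how the paper proves \Cref{lemma 5} and its odd-size analogue. The only difference is organizational: you apply the decomposition to the full error terms and then quote the vanishing of $F_{r,f'}$ and $F'_{r,f'}$ from \Cref{twisted BF} directly, which is slightly more economical than decomposing the small periods on $G_1\times G_{2r}$ and $G_1\times G_{2r+1}$ and rerunning the truncation argument of \Cref{F_r=0} (incidentally, in step (3) as you indexed it the relevant parabolic is $P_{(2r-1,2n-2r+1)}$ rather than $P_{(2r+1,2n-2r-1)}$, but your route never needs this).
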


\begin{proof}
    The unfolding step is identical to that of \Cref{unfolding twisted BF}. The vanishing statement follows directly from the two lemmas below. We omit the details.
\end{proof}

\begin{lemma}\label[lemma]{lemma 5}
    Fix an arbitrary positive integer $r$. Let $G=G_{2r}$ and $H=G_r\times G_r$. We embed $H$ into $G$ by $\nu$. Let $\chi\in \mathfrak{X}_{\Delta_a^*}(G)$. Then for any $f\in \mathcal{S}_{\chi}([G])$, we have
    \begin{equation}
    \int_{[H]}f(\nu(x,y))|x|^{s_1}|y|^{s_2}\mathrm{d}x\mathrm{d}y=0
    \end{equation}
    for any $s_1,s_2\in \mathbb{C}$.
\end{lemma}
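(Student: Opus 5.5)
The period in the statement is defined via $\nu(x,y)=(\det(x)^{-1},\iota(x,y))$, which lands in $G_1\times G_{2r}$. I will therefore read the ambient group as $G_1\times G_{2r}$, with $G_{2r}$ its second factor carrying $\iota(H)$, exactly as in the definition of $\Delta_a^*$-regularity. So $\chi=\eta\boxtimes\chi'$, where $\eta$ is a Hecke character trivial on $A_{G_1}^\infty$ and $\chi'\in\mathfrak{X}_{(\eta,\Delta^*)}(G_{2r})$. The plan is to separate the $G_1$-variable by harmonic analysis on $[G_1]$. This converts the $\nu$-period into a superposition of twisted periods on $[G_{2r}]$, each of which vanishes by \Cref{lemma 1}.

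\textbf{Step 1: spectral decomposition in the $G_1$-variable.} Write $[G_1]=A_{G_1}^\infty\times[G_1]^1$, with $[G_1]^1$ compact. For $f\in\mathcal{S}_\chi([G_1\times G_{2r}])$ and $t\in i\mathbb{R}$, set
$$f_t(g)=\int_{[G_1]}f(a,g)\,\eta^{-1}(a)|a|^{-t}\,\mathrm{d}a .$$
The first point is that $f(a,\cdot)$ can only carry the characters of $[G_1]^1$ that agree with $\eta$ on $[G_1]^1$. Indeed, the cuspidal datum of $G_1\times G_{2r}$ is a product, and pseudo-Eisenstein series attached to data $\eta'\boxtimes\chi''$ with $\eta'|_{[G_1]^1}\neq\eta|_{[G_1]^1}$ are orthogonal to $f$. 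Fourier inversion on $[G_1]^1$ combined with Mellin inversion along $A_{G_1}^\infty$ then gives
$$f(a,g)=\int_{i\mathbb{R}}\eta(a)|a|^{t}f_t(g)\,\mathrm{d}t .$$
Next I check that $f_t\in\mathcal{S}_{\chi'}([G_{2r}])$. Schwartzness follows from the rapid decay of $f$ in the $G_1$-direction. For the spectral support, pair $f_t$ with a pseudo-Eisenstein series $\theta$ on $G_{2r}$ of datum $\chi''\neq\chi'$. This pairing equals the pairing of $f$ with $\eta|\cdot|^{\bar t}\otimes\theta$, suitably cut off by a Schwartz function on $A_{G_1}^\infty$. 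That product has datum $\eta\boxtimes\chi''\neq\chi$, so the pairing vanishes. In addition, $\|f_t\|$ is rapidly decreasing in $|t|$ for each continuous seminorm, by integrating by parts along $A_{G_1}^\infty$.

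\textbf{Step 2: substitution and Fubini.} Insert the expansion at $a=\det(x)^{-1}$. Since $\eta(\det x^{-1})|\det x^{-1}|^{t}=\eta^{-1}(\det x)|x|^{-t}$, the integral in the statement becomes
$$\int_{i\mathbb{R}}\int_{[H]}f_t(\iota(x,y))\,\eta^{-1}(\det x)\,|x|^{s_1-t}|y|^{s_2}\,\mathrm{d}x\,\mathrm{d}y\,\mathrm{d}t .$$
Interchanging the two integrals is the main obstacle. I would control it as follows. The inner integral is bounded by a continuous seminorm of $f_t$ on $\mathcal{S}([G_{2r}])$, uniformly for $t$ in $i\mathbb{R}$ and $s_1,s_2$ in compacts, by \cite[Proposition A.1.1]{Beuzart-Plessis_2021}. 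Combined with the rapid decay of these seminorms in $t$ from Step 1, this yields absolute convergence of the double integral, and Fubini applies. If the bound in \cite[Proposition A.1.1]{Beuzart-Plessis_2021} proves awkward to make uniform, the fallback is to truncate in $t$ by compactly supported cutoffs and pass to the limit by dominated convergence, as in the proof of \Cref{F_r=0}.

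\textbf{Step 3: conclusion.} For each fixed $t$, the inner integral is precisely the twisted period of \Cref{lemma 1} for $f_t\in\mathcal{S}_{\chi'}([G_{2r}])$, with exponents $(s_1-t,s_2)$. Since $\chi'$ is $(\eta,\Delta^*)$-regular, \Cref{lemma 1} shows that it vanishes for every $t$. Hence the whole integral is $0$ for all $s_1,s_2\in\mathbb{C}$.
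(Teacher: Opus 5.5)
Your argument is correct and is essentially the paper's: separate the $G_1$-variable by Fourier inversion along $A_{G_1}^\infty$, with the $[G_1]^1$-part forced to act by $\eta$, which turns the $\nu$-period into an integral over $t$ of twisted periods with exponent $s_1-t$, each of which vanishes by \Cref{lemma 1}. The only difference is that the paper first reduces by density to pure tensors $\varphi\otimes f'$ with $\varphi=\eta\cdot(\rho\circ H_{G_1})$ and then inverts $\rho$. You instead disintegrate a general $f$ through the slices $f_t$. This avoids the paper's density claim, at the cost of checking that $f_t\in\mathcal{S}_{\chi'}([G_{2r}])$ and that $f_t$ decays rapidly in $t$ so that Fubini applies.
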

\begin{proof}
    Assume $\chi=\eta\boxtimes\chi'$, where $\eta$ is a Hecke character trivial on $A_{G_1}^{\infty}$ and $\chi'\in \mathfrak{X}_{(\eta,\Delta^*)}(G_{2r})$. Since $\mathcal{S}_{\eta}([G_1])\otimes \mathcal{S}_{\chi'}([G_{2r}])$ is dense in $\mathcal{S}_{\chi}([G])$, we only need to consider the case when $f$ is pure tensor product. Write $f=\varphi\otimes f'$, where $\varphi\in \mathcal{S}_{\eta}([G_1])$ and $f'\in \mathcal{S}_{\chi'}([G_{2r}])$. Then the above integral becomes
    $$\int_{[H]}f(\nu(x,y))|x|^{s_1}|y|^{s_2}\mathrm{d}x\mathrm{d}y=\int_{[H]}f'(\iota(x,y))\varphi(\textnormal{det}(x)^{-1})|x|^{s_1}|y|^{s_2}\mathrm{d}x\mathrm{d}y.$$
    By the definition of $\mathcal{S}_{\eta}([G_1])$, we can write $$\varphi=\eta\cdot(\rho\circ H_{G_1}),$$
    where $\rho\in \mathcal{S}(\mathbb{R})$ and $H_{G_1}(a)=\log|a|$ for any $a\in \mathbb{A}^{\times}$. By Fourier inversion, we have 
    $$\rho(u)=\frac{1}{2\pi}\int_{\mathbb{R}}\hat{\rho}(t)e^{itu}\mathrm{d}t$$
    for any $u\in \mathbb{R}$. Substituting this into the above integral, we obtain
    $$\int_{[H]}f'(\iota(x,y))\varphi(\textnormal{det}(x)^{-1})|x|^{s_1}|y|^{s_2}\mathrm{d}x\mathrm{d}y=\frac{1}{2\pi}\int_{\mathbb{R}}\hat{\rho}(t)\int_{[H]}f'(\iota(x,y))\eta(\textnormal{det}(x)^{-1})|x|^{s_1-it}|y|^{s_2}\mathrm{d}x\mathrm{d}y\mathrm{d}t=0$$
    by \Cref{lemma 1}. The proof is complete.
\end{proof}

\begin{lemma}
    Fix an arbitrary positive integer $r$. Let $G=G_{2r+1}$ and $H=G_{r+1}\times G_r$. We embed $H$ into $G$ by $\nu(x,y)=(\textnormal{det}(x)^{-1},\iota(x,y))$. Let $\chi\in \mathfrak{X}_{\Delta_a^*}(G)$. Then for any $f\in \mathcal{S}_{\chi}([G])$, we have
    \begin{equation}
    \int_{[H]}f(\nu(x,y))|x|^{s_1}|y|^{s_2}\mathrm{d}x\mathrm{d}y=0
    \end{equation}
    for any $s_1,s_2\in \mathbb{C}$.
\end{lemma}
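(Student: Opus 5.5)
The plan is to repeat the proof of \Cref{lemma 5} verbatim, with \Cref{lemma 2} in place of \Cref{lemma 1}. Here $G=G_1\times G_{2r+1}$ (the $G_1$-factor being implicit in the embedding $\nu$), and $\chi=\eta\boxtimes\chi'$ with $\eta$ a Hecke character trivial on $A_{G_1}^{\infty}$ and $\chi'\in\mathfrak{X}_{(\eta,\Delta^*)}(G_{2r+1})$.

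First, I note that the integral converges absolutely for every $(s_1,s_2)\in\mathbb{C}^2$ and defines a continuous functional on $\mathcal{S}_{\chi}([G])$. This follows as in \Cref{twisted BF period}, since $f$ is Schwartz and $|x|^{s_1}|y|^{s_2}$ grows at most polynomially in the height. Because $\mathcal{S}_{\eta}([G_1])\otimes\mathcal{S}_{\chi'}([G_{2r+1}])$ has dense span in $\mathcal{S}_{\chi}([G])$, it then suffices to treat pure tensors $f=\varphi\otimes f'$ with $\varphi\in\mathcal{S}_{\eta}([G_1])$ and $f'\in\mathcal{S}_{\chi'}([G_{2r+1}])$.

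For such a tensor I write $\varphi=\eta\cdot(\rho\circ H_{G_1})$ with $\rho\in\mathcal{S}(\mathbb{R})$, and expand $\rho$ by Fourier inversion, $\rho(u)=\frac{1}{2\pi}\int_{\mathbb{R}}\hat\rho(t)e^{itu}\,\mathrm{d}t$. Since $H_{G_1}(\det(x)^{-1})=-\log|x|$, this gives $\varphi(\det(x)^{-1})=\frac{1}{2\pi}\int_{\mathbb{R}}\hat\rho(t)\,\eta^{-1}(\det x)\,|x|^{-it}\,\mathrm{d}t$. Substituting, I exchange the $t$-integral with the $[H]$-integral; this is justified by Fubini, because $\hat\rho$ is Schwartz and the inner integral of $|f'(\iota(x,y))|\,|x|^{\mathrm{Re}(s_1)}|y|^{\mathrm{Re}(s_2)}$ is finite. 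The integral becomes
$$\frac{1}{2\pi}\int_{\mathbb{R}}\hat\rho(t)\int_{[H]}f'(\iota(x,y))\eta^{-1}(\det x)|x|^{s_1-it}|y|^{s_2}\,\mathrm{d}x\,\mathrm{d}y\,\mathrm{d}t.$$
By \Cref{lemma 2}, applied to $f'\in\mathcal{S}_{\chi'}([G_{2r+1}])$ with $\chi'$ $(\eta,\Delta^*)$-regular at the parameters $(s_1-it,s_2)$, the inner integral vanishes for every $t$, so the whole expression is $0$.

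The only real content is therefore \Cref{lemma 2}, whose proof was left to the reader; I take it as given. If it had to be checked, I expect the main obstacle to be the orbit analysis there. In the odd case one has $\sum_k r_{k,k}^+=\sum_k r_{k,k}^-+1$, so any $P\cap wHw^{-1}$ with trivial $N'_{s_w}$ must contain a diagonal block with $r_{i,i}^+\neq r_{i,i}^-$. For $r_i\ge2$ that block gives vanishing by \cite[Lemma 7.1]{matringe2025intertwining}. For $r_i=1$ the block is a single positive $G_1$ carrying $\eta^{-1}$, and it vanishes by condition (3) of \Cref{definition}; a single negative $G_1$ block would fall outside condition (3), but it cannot occur because a $G_1$ block with $r_{i,i}^-=1$ is always matched with other blocks when the counts are balanced in this way. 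The remaining blocks are handled exactly as in the proof of \Cref{claim 1}.
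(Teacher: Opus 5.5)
Your proposal is correct and follows the paper's proof exactly. It is the argument of \Cref{lemma 5}, with \Cref{lemma 2} used in place of \Cref{lemma 1}: reduce to pure tensors $\varphi\otimes f'$, then expand $\rho$ by Fourier inversion so that $|x|^{-it}$ is absorbed into $s_1$.

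One small correction to your optional aside on \Cref{lemma 2}: negative $G_1$ blocks can occur (e.g.\ when $M$ is the diagonal torus), but this is harmless. The imbalance $\sum_k r_{k,k}^+-\sum_k r_{k,k}^-=1$ always produces a block with $r_{i,i}^+>r_{i,i}^-$, and one vanishing factor is enough.
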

\begin{proof}
    The proof is identical to the above proof. The only difference is that we use \Cref{lemma 2} instead of \Cref{lemma 1}. We leave the details to the reader.
\end{proof}

We now turn to the proof of the main theorem:
\begin{proof}[Proof of \Cref{main result 2}]
    By \Cref{unfolding 1}, we can get
    $$\mathcal{P}^a(f,\Phi,s_1,s_2)=Z^a(f,\Phi,s_1,s_2)$$
    for any $f\in\mathcal{S}_{\chi}([G])$, $\Phi\in \mathcal{S}(\mathbb{A}^n)$ and $s_1,s_2\in \mathbb{C}$. Write $\tilde{f}(a,g)=f(a^{-1},{^tg^{-1}})$. If $f\in \mathcal{S}_{\chi}([G])$, then $\tilde{f}\in \mathcal{S}_{\chi^{\vee}}([G])$. Now by the change of variable $(x,y)\mapsto({^tx^{-1}},{^ty^{-1}})$ and \Cref{theta series}, we have
    $$Z^a(f,\Phi,s_1,s_2)=Z^a(\tilde{f},\hat{\Phi},-s_1,-s_2).$$
    By the definition of $\Delta_a^*$-regularity, we can see that $\chi\in \mathfrak{X}_{\Delta_a^*}(G)$ is equivalent to $\chi^{\vee}\in \mathfrak{X}_{\Delta_a^*}(G)$. The rest of the proof is the same as the proof of \Cref{main result}. We omit the details.
\end{proof}

\section{A new $\textnormal{SL}_{n+1}\times \textnormal{GL}_n$ period on $\textnormal{GL}_{2n+1}$}\label{new}
\subsection{Statements of the main results}
\subsubsection{Notation}\label{notation 5}
For any positive integer $n$, let $G=G_{2n+1}$ and $H=S_{n+1}\times G_n$. We embed $H$ into $G$ in the following way: we put $S_{n+1}$ in all positions $(i,j)$ where $i,j\in \{ 1,2,4,...,2n\}$ and $G_n$ in all positions $(i,j)$ where $i,j\in \{ 3,5,...,2n-1,2n+1\}$. We call this embedding $\iota'$. The following is an example:

\begin{eg}
    When $n=3$, the embedding of $H$ into $G$ is as follows:
    $$\begin{pmatrix}
        * & * &  & * &  & *&  \\
        * & * &  & * &  & *&  \\
         & & \circ & & \circ & & \circ \\
         * & * &  & * &  & *& \\
         & & \circ & & \circ & & \circ \\
         * & * &  & * &  & *& \\
         & & \circ & & \circ & & \circ \\
    \end{pmatrix}.$$
\end{eg}

Similarly, for $G=G_{2n+2}$ and $H=S_{n+2}\times G_n$, we can also define an embedding from $H$ to $G$, which is also called by $\iota'$, by putting the first $S_{n+2}$ factor into all positions $(i,j)$ where $i,j\in \{ 1,2,4,...,2n,2n+2\}$, and putting the second $G_n$ factor into all positions $(i,j)$ where $i,j\in \{ 3,5,...,2n-1,2n+1\}$. The following is an example:

\begin{eg}
    When $n=2$, the embedding of $H$ into $G$ is as follows:
    $$\begin{pmatrix}
        * & * &  & * &  & *\\
        * & * &  & * &  & *\\
         & & \circ & & \circ &\\
         * & * &  & * &  & *\\
         & & \circ & & \circ &\\
         * & * &  & * &  & *\\
    \end{pmatrix}.$$
\end{eg}
We denote both embedding by the same symbol $\iota'$. For a matrix with odd size, we mean the first embedding; and for a matrix with even size, we mean the second embedding.

Throughout this section, we fix a positive integer $n$, and let $G=G_{2n+1}$, $H=S_{n+1}\times G_n$. We embed $H$ into $G$ by $\iota'$. Let $\mathcal{P}_r^1$ denote the standard mirabolic subgroup of $S_r$ (i.e. the stabilizer of $e_r$ in $S_r$), and if $r\leq n+1$, we embed  $\mathcal{P}_r^1$ into $S_{n+1}$ by putting it into the upper left corner. 

We also introduce the following definitions:
\begin{itemize}
    \item For $0\leq r\leq n+1$, let $P_{r,n+1}^1$ denote the parabolic subgroup of $S_{n+1}$ corresponding to the partition $(r,1,\dots,1)$ of $n+1$, and let $N_{r,n+1}^1$ denote the unipotent radical of $P_{r,n+1}^1$. Let $N_{n+1}^1:=N_{0,n+1}^1=N_{1,n+1}^1$ be the upper triangular maximal unipotent subgroup of $S_{n+1}$.
    \item Let $\psi'$ denote the character on $N_{2n+1}(\mathbb{A})$ defined by $\psi'(u)=\psi(u_{2,3}+\dots +u_{2n,2n+1})$. The character induced by $\psi'$ on the subgroup $N_{r,2n+1}$ will also be denoted by $\psi'$ for any $0\leq r\leq 2n+1$. Note that $\psi'|_{N_{r,2n+1}}=\psi|_{N_{r,2n+1}}$ for any $2\leq r\leq 2n+1$ (for the definition of $\psi$, see \Cref{notation section 3}).
\end{itemize}

\subsubsection{The period}

We define a bilinear functional $\mathcal{S}([G])\times \mathcal{S}(\mathbb{A}^n)\mapsto \mathbb{C}$ by
$$\mathcal{P}(f,\Phi)=\int_{[S_{n+1}]\times [G_n]}f(\iota'(x,y))\Theta(y,\Phi)\mathrm{d}x\mathrm{d}y.$$

By the same argument as in the previous discussion, we can see that the integral defining $\mathcal{P}(f,\Phi)$ is absolutely convergent and defines a continuous bilinear functional on $\mathcal{S}([G])\times \mathcal{S}(\mathbb{A}^n)$.

\subsubsection{Zeta integrals}

For $f\in \mathcal{T}([G])$ and $0\leq r\leq 2n+1$, we define the corresponding Fourier coefficient
$$f_{N_{r,2n+1,\psi'}}(g)=\int_{[N_{r,2n+1}]}f(ug)\psi'^{-1}(u)\mathrm{d}u.$$
We associate its degenerate Whittaker function
$$V_f(g)=f_{N_{2n+1,\psi'}}(g)=\int_{[N_{2n+1}]}f(ug)\psi'^{-1}(u)\mathrm{d}u.$$
This is a degenerate Whittaker function because $\psi'$ is trivial in $u_{1,2}$.

Let $P_{(1,n)}^1$ denote the standard parabolic subgroup of $S_{n+1}$ corresponding to the partition $(1,n)$ of $n+1$.For any $f\in\mathcal{T}([G])$, $\Phi\in\mathcal{S}(\mathbb{A}^n)$, $\lambda\in \mathfrak{a}_{P_{(1,n)}^1,\mathbb{C}}^*$ and $s_2\in\mathbb{C}$, define the following integral:

$$Z(f,\Phi,\lambda,s_2)=\int_{N_{n+1}^1(\mathbb{A})\backslash S_{n+1}(\mathbb{A})}\int_{N_n(\mathbb{A})\backslash G_n(\mathbb{A})}V_f(\iota'(x,y))\Phi(e_ny)e^{\langle \lambda,H_{P_{(1,n)}^1}(x)\rangle}|y|^{s_2+\frac{1}{2}}\mathrm{d}x\mathrm{d}y$$
provided by the expression converges absolutely. Note that $\mathfrak{a}_{P_{(1,n)}^1,\mathbb{C}}^*$ is a one-dimensional complex vector space, and we can define the coordinate $s_{\lambda}$ of $\lambda$ such that
$$\exp(\langle \lambda,H_{P_{(1,n)}^1}(\begin{pmatrix}
    \textnormal{det}(g)^{-1} & \\ & g
\end{pmatrix})\rangle)=|g|^{s_{\lambda}}.$$

The following lemma will provide the absolute convergence of the above expression of integral.

\begin{lemma}\label[lemma]{absolute convergence 4}
    For any $N\in \mathbb{N}$, there exists $c_N>0$ such that
    \begin{itemize}
        \item[(1)] For any $f\in \mathcal{T}_N([G])$ and $\Phi\in \mathcal{S}(\mathbb{A}^n)$, the expression defining $Z(f,\Phi,\lambda,s_2)$ converges absolutely for $s_{\lambda},s_2\in \mathcal{H}_{>c_N}$, and defines a holomorphic function on the region $\mathcal{D}_{c_N}:=\{(\lambda,s_2)|\; (s_{\lambda},s_2)\in\mathcal{H}_{>c_N}\times \mathcal{H}_{>c_N}\}$.
        \item[(2)] The functional $f\mapsto Z(f,\Phi,\lambda,s_2)$ is continuous on $\mathcal{T}_N([G])$ for any $(\lambda,s_2)\in \mathcal{D}_{c_N}$ and $\Phi\in \mathcal{S}(\mathbb{A}^n)$.
    \end{itemize}
\end{lemma}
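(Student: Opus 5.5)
We follow the same scheme as the proof of \Cref{absolute convergence 1}, now with the degenerate character $\psi'$ and the group $S_{n+1}$ in place of the first $G_n$. The first step is to parametrize the domain. By the Iwasawa decomposition $S_{n+1}(\mathbb{A})=B^1(\mathbb{A})K^1$, where $B^1$ is the standard Borel of $S_{n+1}$, and $G_n(\mathbb{A})=B_n(\mathbb{A})K$, we write
$$x=D(a_0,a_1,\dots,a_n)k_1,\qquad \prod_{i=0}^n a_i=1,$$
$$y=D(b_1,\dots,b_n)k_2.$$
We then bound $|V_f(\iota'(x,y))|$ pointwise. The torus element $t=\iota'(D(a_0,\dots,a_n),D(b_1,\dots,b_n))$ is the diagonal matrix
$$\textnormal{diag}(a_0,a_1,b_1,a_2,b_2,\dots,a_n,b_n).$$
The functional $V_f$ is the Fourier coefficient $f_{N,\psi_l}$ attached to the Borel $B_{2n+1}$, with $l$ the linear form $u\mapsto u_{2,3}+\dots+u_{2n,2n+1}$. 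We apply \Cref{important lemma}(2) with $P=B_{2n+1}$ and $M=T$. Since $l$ has no $(1,2)$ component, we get, for every $N_1\geq0$,
$$|V_f(tk)|\ll \prod_{i=1}^{n}\Big\|\frac{a_i}{b_i}\Big\|_{\mathbb{A}}^{-N_1}\prod_{i=1}^{n-1}\Big\|\frac{b_i}{a_{i+1}}\Big\|_{\mathbb{A}}^{-N_1}\prod_{i=0}^n\|a_i\|_{G_1}^N\prod_{i=1}^n\|b_i\|_{G_1}^N.$$
The implied constant is a continuous seminorm on $\mathcal{T}_N([G])$.

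Next we convert this into decay in each variable. As before, $|\Phi(b_ne_nk_2)|\ll\|b_n\|_{\mathbb{A}}^{-N_1}$. Chaining the ratios $a_n/b_n$, $b_{n-1}/a_n$, and so on down to $a_1/b_1$, we obtain for every $N_2$ a bound
$$\prod_{i=1}^n\|a_i\|_{\mathbb{A}}^{-N_2}\,\|b_i\|_{\mathbb{A}}^{-N_2}.$$
The key new point is the variable $a_0$, for which we have no Schwartz decay from $\psi'$. We eliminate it using the constraint $a_0=(a_1\cdots a_n)^{-1}$. This turns $\|a_0\|_{G_1}^N$ into a polynomial factor
$$\ll \prod_{i=1}^n \max(|a_i|,|a_i|^{-1})^{MN},$$
which is harmless against the decay in the $a_i$, exactly as in the augmented case.

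We then compute the characters. The term $\delta_{B^1}^{-1}$ in the measure on $N^1_{n+1}\backslash S_{n+1}$, together with $e^{\langle\lambda,H_{P^1_{(1,n)}}(x)\rangle}=|a_0|^{-s_\lambda}=\prod_{i\ge1}|a_i|^{s_\lambda}$, gives a product of one-dimensional integrals of the form
$$\int_{\mathbb{A}^\times}\|a_i\|_{G_1}^{N'}\,|a_i|^{\textnormal{Re}(s_\lambda)+c_i}\,\|a_i\|_{\mathbb{A}}^{-N_2}\,\mathrm{d}a_i$$
and analogous integrals in $b_i$ with exponent $\textnormal{Re}(s_2)+c_i'$. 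Here the $c_i,c_i'$ are constants depending only on $n$. These converge once $\textnormal{Re}(s_\lambda),\textnormal{Re}(s_2)>c_N$, by \Cref{c>1} together with $\|x\|_{G_1}\ll\max(|x|^M,|x|^{-M})$. This is the same argument as in \eqref{BF2}.

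Holomorphy follows from local uniformity on compacta, and continuity in $f$ from the seminorm control, exactly as in \Cref{T_N proof}. The main obstacle is the bookkeeping for $a_0$. We must check two things: that the modulus character contribution of $a_0$ is expressed correctly in terms of $a_1,\dots,a_n$, and that its sign leaves every exponent of the form $\textnormal{Re}(s_\lambda)+\text{const}$ rather than $-\textnormal{Re}(s_\lambda)$. The latter is what the normalization of $s_\lambda$ ensures. If this requires checking, one can alternatively first integrate over $P^1_{(1,n)}\cong G_n$ via $g\mapsto\textnormal{diag}(\det g^{-1},g)$, which reduces the estimate directly to the augmented lemma above.
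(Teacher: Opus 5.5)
Your proposal is correct and follows essentially the paper's own proof. Both arguments take the Iwasawa decomposition along the Borel of $H$ with $a_0=(a_1\cdots a_n)^{-1}$, bound the degenerate coefficient $V_f$ by \Cref{important lemma}(2) for $N_{2n+1}$ (the form $l$ has no $(1,2)$-entry), chain the ratio decay with the Schwartz decay of $\Phi$, and reduce to one-variable integrals with exponents $\textnormal{Re}(s_\lambda)+2i$ and $\textnormal{Re}(s_2)-\frac{1}{2}+2i-n$. You also absorb the $\|a_0\|_{G_1}^N$ factor explicitly (and correctly), a point the paper's proof passes over silently.
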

\begin{lemma}\label[lemma]{absolute convergence 5}
    We have the following statements:
    \begin{itemize}
        \item[(1)] For any $f\in \mathcal{S}([G])$, $\Phi\in \mathcal{S}(\mathbb{A}^n)$, $\lambda\in \mathfrak{a}_{P_{(1,n)}^1,\mathbb{C}}^*$ and $s_2\in\mathbb{C}$, the expression defining $Z(f,\Phi,\lambda,s_2)$ converges absolutely and defines an entire function on $\mathfrak{a}_{P_{(1,n)}^1,\mathbb{C}}^*\times \mathbb{C}$.
        \item[(2)] The functional $f\mapsto Z(f,\Phi,\lambda,s_2)$ is continuous on $\mathcal{S}([G])$ for any $\lambda\in \mathfrak{a}_{P_{(1,n)}^1,\mathbb{C}}^*$, $s_2\in \mathbb{C}$ and $\Phi\in \mathcal{S}(\mathbb{A}^n)$.
    \end{itemize}
\end{lemma}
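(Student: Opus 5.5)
We reduce both statements to the augmented zeta integrals of \Cref{augmented}, following the reduction sketched in \Cref{1.4}. Write $Q=P_{(1,2n)}$ with Levi $G_1\times G_{2n}$. Use the Iwasawa decomposition $S_{n+1}(\mathbb{A})=P^1_{(1,n)}(\mathbb{A})K^1$, and write $x=u\,\mathrm{diag}(\det(g)^{-1},g)\,k$ with $g\in G_n(\mathbb{A})$. The embedding $\iota'$ sends $\mathrm{diag}(\det g^{-1},g)$ together with $y$ to $\nu(g,y)$ inside $G_1\times G_{2n}\subset G_{2n+1}$, up to the obvious block identification.

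The coordinate $u_{1,2}$ on which $\psi'$ is trivial, together with the entries of the first row of $N_{2n+1}$ coming from $\iota'(N^1_{n+1})$ and the remaining first-row entries, fills out the unipotent radical $N_Q$. Integrating over it therefore produces the constant term $f_Q$. We then get formally
$$Z(f,\Phi,\lambda,s_2)=\int_{K^1}Z^a\bigl((R(\iota'(k,1))f)_Q,\Phi,s_\lambda+c,s_2\bigr)\,\mathrm{d}k,$$
where $c$ is the shift coming from $\delta_{P^1_{(1,n)}}^{-1}$ (the paper says $c=n+1$). It also absorbs the modulus factor from passing between $N_n\backslash G_n$ and $N^1_{n+1}\backslash P^1_{(1,n)}$. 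The first step is to verify this identity as an identity of nonnegative integrals when $|f|$ replaces $f$, that is, with $V_f$ bounded via the Whittaker estimates. It is unconditional by Tonelli, so convergence of one side gives convergence of the other.

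\textbf{(1) and (2).} For $f\in\mathcal{S}([G])$, the map $f\mapsto f_Q$ sends $\mathcal{S}([G])$ continuously into $\mathcal{S}([G]_Q)$, uniformly in $k\in K^1$. It remains to prove the analogue of \Cref{absolute convergence 3} for the full $Z^a$ (the case $r=1$) with $f_Q\in\mathcal{S}([G]_Q)$ in place of $f\in\mathcal{S}([G_1\times G_{2n}])$. We apply \Cref{important lemma}(1) on $G$ with the parabolic $B$ and the degenerate character $\psi'$; the $N_Q$-integration is already built in. This gives decay $\|\mathrm{Ad}^*(m^{-1})l\|^{-N_1}\|m\|_M^{-N_2}\delta(m)^{-cN_2}$ in the torus variables $(a_i,b_i)$ and in $\det g$. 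The character $l$ now omits the simple root $\alpha_1$, so there is no Whittaker decay in the ratio of the first two diagonal entries. This is exactly what the free parameter $\lambda$ compensates. We then run the same exponent-balancing argument as in the proof of \Cref{absolute convergence 3}: fix $|\mathrm{Re}(s_\lambda)-\mathrm{Re}(s_2)|$, choose $N_2$ so that all exponents lie in the window of \Cref{c>1} and \cite[Corollary 2.3.4]{lu2025periodsdetectingeisensteinseries}, and finally use the twist $f\mapsto f|\det|^{-b}$ to reach arbitrary $(\lambda,s_2)$.

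Local uniformity of these bounds in $(\lambda,s_2)$ gives holomorphy, hence entireness in $\mathfrak{a}^*_{P^1_{(1,n)},\mathbb{C}}\times\mathbb{C}$. The implied constants are continuous seminorms of $f$ by \Cref{important lemma}(1), which gives (2).

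\textbf{Main obstacle.} The delicate step is the bookkeeping in the reduction. We must identify precisely which unipotent coordinates are integrated against the trivial character and confirm that they form $N_Q$. We must also check that the remaining torus directions all receive Schwartz decay from either \Cref{important lemma}(1) or $\Phi$, apart from the single direction $|\det g|$. The decay in $|\det g|$ comes only from $\|m\|_M^{-N_2}\delta^{-cN_2}$, and the twisting argument must be applied to it carefully.
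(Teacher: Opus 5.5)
Your reduction $Z(f,\Phi,\lambda,s_2)=\int_{K^1}Z^a((R(k)f)_Q,\Phi,s_\lambda+n+1,s_2)\,\mathrm{d}k$ is correct; the paper uses this identity later, in the proof of \Cref{main result 1}. Bounding $V_f$ through \Cref{important lemma}(1) on $G$, with $P=B$ and the degenerate character $\psi'$, is also exactly the paper's tool.

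However, the claim that $f\mapsto f_Q$ maps $\mathcal{S}([G])$ into $\mathcal{S}([G]_Q)$ is false. Constant terms of Schwartz functions need not decay in the negative $A_Q$-direction, since long horocycles equidistribute. This is why \Cref{important lemma}(1) only gives the bound $\|m\|_M^{-N_2}\delta_P(m)^{-cN_2}$, and why the proof of \Cref{F_r=0} must cut $f_P$ off by $\kappa_j\circ H_P$ before it lands in $\mathcal{S}([G]_P)$. So you cannot treat $f_Q$ as a Schwartz function on $[G_1\times G_{2n}]$ and twist it by arbitrary characters of $G_1\times G_{2n}$, as the Section 4 arguments would require.

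The real gap is in reaching arbitrary $\lambda$. The balancing scheme of \Cref{absolute convergence 3} works because $f\mapsto f|\det|^{-b}$ shifts both variables, since $|\det\iota(x,y)|=|x||y|$. Here $x\in S_{n+1}$, so $|\det\iota'(x,y)|=|y|$, and the twist moves only $s_2$. Your remark that $\lambda$ compensates for the missing $\alpha_1$-decay suggests making $\mathrm{Re}(s_\lambda)$ large. That gives convergence only on a half-space in $s_\lambda$, not an entire function of $\lambda$.

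The missing observation concerns $\delta_B$ on $\iota'(D(a_0,\dots,a_n),D(b_1,\dots,b_n))$ with $a_0=(\prod_i a_i)^{-1}$. There one has $\delta_B=\prod_i|a_i|^{2-4i}|b_i|^{2n-4i}$. Hence the factor $\delta_B^{-cN_2}$ from \Cref{important lemma}(1) contributes $|a_i|^{cN_2(4i-2)}$, with strictly positive exponents. The argument then runs as follows:
\begin{itemize}
\item For any fixed $s_\lambda$, choose $N_2$ large so that every $a_i$-exponent exceeds $1$.
\item With $N_2$ fixed, take $\mathrm{Re}(s_2)$ large so that the $b_i$-exponents are at least $1$.
\item Take $N_3$ large and apply \Cref{c>1}.
\item Use the determinant twist only in the $s_2$-variable to reach all $s_2$.
\end{itemize}
This is the paper's proof. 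It works directly with the Iwasawa decomposition of $H$, so the detour through $Z^a$ and $K^1$ is unnecessary.
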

The proof will be provided in the next subsection.

\subsubsection{$\Delta^*_1$-regular cuspidal datum}\label{delta1}
Let $r\geq 2$ be an integer, and set $G=G_r$. Let $\chi\in \mathfrak{X}(G)$ be a cuspidal datum of $G$, represented by a pair $(M,\pi)$, where
$$M=\prod_{i=1}^lG_{r_i},\qquad\pi=\pi_1\boxtimes\cdots\boxtimes\pi_l$$
and each $\pi_i$ is a cuspidal representation of $G_{r_i}$.

Let $J_1(\chi):=\{i\;|\;r_i=1\}$. For each $i\in J_1(\chi)$, write $\eta_i=\pi_i$ viewed as a Hecke character of $G_1(\mathbb{A})=\mathbb{A}^{\times}$ and let $(M^i,\pi^i):=(\prod_{j\neq i}G_{r_j},\boxtimes_{j\neq i}\pi_j)$. Denote by $\chi^i=[M^i,\pi^i]$ the cuspidal datum of $G_{r-1}$ by removing the $i$-th block.

\begin{defn}\label[defn]{definition1}
    We say that $\chi$ is $\Delta_1^*$-regular if, for every $i\in J_1(\chi)$, the cuspidal datum $\chi^{i}$ is $(\eta_i,\Delta^*)$-regular.

    In particular, if $J_1(\chi)=\varnothing$, then $\chi$ is automatically $\Delta_1^*$-regular.
\end{defn}

Let $\mathfrak{X}_{\Delta_1^*}(G)\subset \mathfrak{X}(G)$ denote the set of $\Delta_1^*$-regular cuspidal datum. We write $\mathcal{S}_{\Delta_1^*}([G])$ (resp. $\mathcal{T}_{\Delta_1^*}([G])$) for $\mathcal{S}_{\mathfrak{X}_{\Delta_1^*}(G)}([G])$ (resp. $\mathcal{T}_{\mathfrak{X}_{\Delta_1^*}(G)}([G])$).

\begin{rmk}
    The definition is independent of the ordering of the blocks of $M$, since the condition is imposed for every $G_1$-block. It is also invariant under twists in the $A_M^\infty$-direction, by \Cref{Remark 1}.
\end{rmk}

\subsubsection{Main results}

Here we state the main result of this section:

\begin{thm}\label[thm]{main result 1}
    Let $\chi\in \mathfrak{X}_{\Delta_1^*}(G)$ be a $\Delta_1^*$-regular cuspidal datum. Fix $\Phi\in \mathcal{S}(\mathbb{A}^n)$. We have the following statements:
    \begin{itemize}
        \item[(1)] The continuous functional $\mathcal{P}(\cdot,\Phi)$ on $\mathcal{S}_{\chi}([G])$ extends by continuity to a continuous functional $\mathcal{P}^*(\cdot,\Phi)$ on $\mathcal{T}_{\chi}([G])$.
        \item[(2)] For every $f\in \mathcal{T}_{\chi}([G])$, the function $(\lambda,s_2)\mapsto Z(f,\Phi,\lambda,s_2)$, a priori defined on $\mathcal{D}_{c_N}$, extends to an entire function on $\mathfrak{a}_{P_{(1,n)}^1,\mathbb{C}}^*\times \mathbb{C}$. Moreover, we have
        $$\mathcal{P}^*(f,\Phi)=Z(f,\Phi,0,0).$$
    \end{itemize}
\end{thm}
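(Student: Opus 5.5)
The plan is to follow the same three-stage strategy as for \Cref{main result} and \Cref{main result 2}: unfold the period on $\mathcal{S}_{\chi}([G])$ to the zeta integral $Z(f,\Phi,\lambda,s_2)$ at $(\lambda,s_2)=(0,0)$, then continue $Z$ analytically to all of $\mathfrak{a}_{P_{(1,n)}^1,\mathbb{C}}^*\times\mathbb{C}$ for $f\in\mathcal{T}_{\chi}([G])$, and finally use density of $\mathcal{S}_\chi([G])$ in $\mathcal{T}_\chi([G])$ to define $\mathcal{P}^*$. The new feature is that the first factor is $S_{n+1}$ rather than $G_{n+1}$. There is thus no obvious global functional equation of the type $(x,y)\mapsto({}^tx^{-1},{}^ty^{-1})$ in the variable $\lambda$ directly. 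I would therefore reduce to the augmented period of \Cref{augmented}, as announced in \Cref{1.4}.

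\textbf{Step 1 (unfolding on $\mathcal{S}_\chi$).} For $f\in\mathcal{S}_\chi([G])$, I would first integrate over the unipotent radical of $P^1_{(1,n)}$ inside $S_{n+1}$. Since $[S_{n+1}]$ contains $P^1_{(1,n)}(F)\backslash S_{n+1}(\mathbb{A})$ only after an Eisenstein-type unfolding, I would instead proceed directly by successive Fourier expansions as in \Cref{unfolding twisted BF}, now using $\psi'$, which is trivial on $u_{1,2}$.

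The first expansion, along the column above position $(3,3)$, produces the constant-term piece $f_Q$ on the $(1,2n)$ block together with a sum over $\mathcal{P}^1(F)\backslash\cdots$. Concretely, I would write $S_{n+1}(\mathbb{A})=P^1_{(1,n)}(\mathbb{A})K^1$ and identify the Levi $\{(\det g^{-1},g)\}$ of $P^1_{(1,n)}$ with $G_n$ via $x\mapsto g$. The $S_{n+1}\times G_n$ integral then restricted to the Levi becomes $\nu(g,y)$ inside $G_1\times G_{2n}\subset Q$. This yields
\[
Z(f,\Phi,\lambda,s_2)=\int_{K^1}Z^a\big((R(k)f)_Q,\Phi,s_\lambda+n+1,s_2\big)\,\mathrm{d}k ,
\]
valid first in the convergence region $\mathcal{D}_{c_N}$ of \Cref{absolute convergence 4}. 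The shift $n+1$ comes from $\delta_{P^1_{(1,n)}}$.

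Similarly, for $f\in\mathcal{S}_\chi([G])$, I would show $\mathcal{P}(f,\Phi)=Z(f,\Phi,0,0)$, with $Z$ entire by \Cref{absolute convergence 5}. The extra terms arising at each Fourier step are periods over smaller $S\times G$ or $G\times G$ subgroups of constant terms. I would kill them using the fact that the constant term $(\kappa_j\circ H_Q) f_Q$ lies in $\mathcal{S}_{\chi^{M_Q}}([G]_Q)$. Its $G_1\times G_{2n}$-components are $\eta_i\boxtimes\chi^i$, and these are $\Delta_a^*$-regular exactly by $\Delta_1^*$-regularity; components whose $G_1$-factor is not a $G_1$-block of $\chi$ vanish for cuspidality reasons. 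Then \Cref{lemma 5} and its odd analogue apply, with the $\kappa_j$ truncation and dominated convergence argument of \Cref{F_r=0}.

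\textbf{Step 2 (continuation).} For $f\in\mathcal{T}_\chi([G])$, the constant term $(R(k)f)_Q$ lies in $\mathcal{T}([G]_Q)$ and decomposes according to $\chi^{M_Q}$, all of whose data are $\Delta_a^*$-regular. By \Cref{main result 2}, applied with uniformity in $k\in K^1$ (continuity of $f\mapsto Z^a$ in the $\mathcal{T}_N$ topology), $Z^a((R(k)f)_Q,\Phi,s_1,s_2)$ is entire in $(s_1,s_2)$ and locally bounded uniformly in $k$. Integrating over compact $K^1$ then gives the entire continuation of $Z$ and continuity of $f\mapsto Z(f,\Phi,\lambda,s_2)$ on $\mathcal{T}_{N,\chi}$.

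Since $\mathcal{P}=Z(\cdot,\Phi,0,0)$ on the dense subspace $\mathcal{S}_\chi([G])$ and the right side is continuous, defining $\mathcal{P}^*(f,\Phi):=Z(f,\Phi,0,0)$ proves (1) and (2).

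\textbf{Main obstacle.} The hardest point is making Step 2 rigorous. \Cref{main result 2} is stated for $\mathcal{T}_\chi([G_1\times G_{2n}])$, whereas $f_Q$ lives on $[G]_Q$, carries a modulus twist, and is a finite sum over $\chi^{M_Q}$. I would handle this by restricting $f_Q$ to $[M_Q]$ times $\delta_Q^{-1/2}$ and absorbing the twist into $s_1$. I would then check that the restriction map $\mathcal{T}_\chi([G])\to\mathcal{T}_{\chi^{M_Q}}([M_Q])$ is continuous, using \cite[Lemmas 2.5.4, 2.5.5]{lu2025periodsdetectingeisensteinseries}. Finally, I would verify that the continued $Z^a$ is jointly continuous in $(k,f)$, so that the $K^1$-integral commutes with continuation.
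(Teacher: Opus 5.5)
Your Step 2 is the paper's argument: the Iwasawa decomposition along $P^1_{(1,n)}$, the identity $Z(f,\Phi,\lambda,s_2)=\int_{K^1}Z^a((R(k)f)_Q,\Phi,s_\lambda+n+1,s_2)\,\mathrm{d}k$, the decomposition of $(R(k)f)_Q$ along $\chi^{M_Q}=\{\eta_i\boxtimes\chi^i\}_{i\in J_1(\chi)}$, \Cref{main result 2}, and integration over $K^1$.

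\textbf{The gap is in Step 1}, where you need $\mathcal{P}(f,\Phi)=Z(f,\Phi,0,0)$ on $\mathcal{S}_\chi([G])$. The extra terms of the unfolding are of two kinds.
\begin{itemize}
\item[(i)] The term $\Phi(0)\int_{[S_{n+1}]\times[G_n]}f(\iota'(x,y))|y|^{1/2}\,\mathrm{d}x\,\mathrm{d}y$, coming from the $v=0$ term of the theta series. It involves $f$ itself, not a constant term.
\item[(ii)] Terms built from constant terms along $P_{(2r+1,2n-2r)}$ or $P_{(2r,2n+1-2r)}$. Their inner part is a period over $[S_{r+1}]\times[G_r]$ on $G_{2r+1}$, resp.\ over $[S_{r+1}]\times[G_{r-1}]$ on $G_{2r}$.
\end{itemize}
None of these are constant terms along $Q$. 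For instance, the expansion along the column above $(3,3)$ yields the constant term along $P_{(2,2n-1)}$. The function $f_Q$ only enters through $V_f=W_{f_Q}$, because $\psi'$ is trivial on the first row.

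All of these are periods over the full quotient $[S_{r+1}]$. As you observe yourself, passing to $P^1(F)\backslash S_{r+1}(\mathbb{A})$ would require unfolding an Eisenstein series that is not present. So these terms cannot be expressed through $f_Q$, and \Cref{lemma 5} and its odd analogue, which concern $\nu(G_r\times G_r)$-periods on $G_1\times G_{2r}$, do not apply.

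Cuspidal $\chi$ shows this is not a technicality. Such $\chi$ is vacuously $\Delta_1^*$-regular and $f_Q=0$ on $\mathcal{S}_\chi([G])$, so your mechanism yields nothing. Yet term (i) must still vanish, and this is the nontrivial vanishing of twisted $G_{n+1}\times G_n$ linear periods of cusp forms on $G_{2n+1}$.

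\textbf{What is missing} is a vanishing result for $S\times G$-periods on $\mathcal{S}_{\Delta_1^*}$, namely \Cref{lemma 3} ($S_{r+1}\times G_r\subset G_{2r+1}$) and \Cref{lemma 4} ($S_{r+2}\times G_r\subset G_{2r+2}$). The paper proves these as follows.
\begin{itemize}
\item[(a)] Disintegrate along $\det$ and apply Fourier inversion on $F^{\times}\backslash\mathbb{A}^{\times}$. This turns the $[S_{r+1}]$-period into an integral over unitary Hecke characters $\eta$ of $\eta^{-1}(\det x)$-twisted $G_{r+1}\times G_r$-periods.
\item[(b)] Run Matringe's double coset analysis on pseudo-Eisenstein series. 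The imbalance $\sum_k r^+_{k,k}=\sum_k r^-_{k,k}+1$ forces a block $i$ with $r^+_{i,i}>r^-_{i,i}$.
\item[(c)] If $r_i\geq 2$, that contribution vanishes by \cite[Lemma 7.1]{matringe2025intertwining}.
\item[(d)] If $r_i=1$, the factor $\int_{F^{\times}\backslash\mathbb{A}^1}\eta^{-1}\eta_i$ vanishes unless $\eta=\eta_i|\cdot|^c$. In that case the remaining blocks give an $\eta_i$-twisted $\Delta^*$-configuration for $\chi^i$, which is killed by $(\eta_i,\Delta^*)$-regularity as in \Cref{lemma 1}.
\end{itemize}
These lemmas are then applied directly to $f$ for (i), and via the $\kappa_j$-truncation of \Cref{F_r=0} to the constant terms in (ii). This is where $\Delta_1^*$-regularity is used on the Schwartz side. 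The $\Delta_a^*$-regularity of $\eta_i\boxtimes\chi^i$ that you invoke enters only in the continuation step.
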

The proof of \Cref{main result 1} will be given in \Cref{proof of section 5}.

\subsection{Convergence of zeta integrals}
\subsubsection{More zeta integrals}
We will introduce two more zeta integrals. For $f\in \mathcal{S}([G])$, $\Phi\in\mathcal{S}(\mathbb{A}^n)$ and $1\leq r\leq n$, we put
$$Z_r(f,\Phi)=\int_{\mathcal{P}^1_{r+1}(F)N^1_{r+1,n+1}(\mathbb{A})\backslash S_{n+1}(\mathbb{A})}\int_{G_{r-1}(F)N_{r-1,n}(\mathbb{A})\backslash \textnormal{GL}_n(\mathbb{A})}f_{N_{2r,2n+1,\psi'}}(\iota'(x,y))\Phi(e_ny)|y|^{1/2}\mathrm{d}x \mathrm{d}y$$
and 
$$Z_r'(f,\Phi)=\int_{S_{r+1}(F)N^1_{r+1,n+1}(\mathbb{A})\backslash S_{n+1}(\mathbb{A})}\int_{\mathcal{P}_{r}(F)N_{r,n}(\mathbb{A})\backslash G_n(\mathbb{A})}f_{N_{2r+1,2n+1,\psi'}}(\iota'(x,y))\Phi(e_ny)|y|^{1/2}\mathrm{d}x \mathrm{d}y,$$
where the absolute convergence is provided by the following lemma:
\begin{lemma}\label[lemma]{absolute convergence 6}
    For any $f\in \mathcal{S}([G])$, $\Phi\in \mathcal{S}(\mathbb{A}^n)$ and $1\leq r\leq n$, the expressions defining $Z_r(f,\Phi)$ and $Z'_r(f,\Phi)$ are absolutely convergent.
\end{lemma}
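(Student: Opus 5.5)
We follow the proof of \Cref{absolute convergence 3} closely, now with the embedding $\iota'$ and the degenerate character $\psi'$. We treat $Z_r(f,\Phi)$ first; $Z'_r(f,\Phi)$ is handled the same way with the roles of the two factors interchanged.

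\textbf{Reduction to torus integrals.} We apply the Iwasawa decompositions $S_{n+1}(\mathbb{A})=P^1_{r+1,n+1}(\mathbb{A})K^1$ and $G_n(\mathbb{A})=P_{r-1,n}(\mathbb{A})K$. We write the Levi part of the $S_{n+1}$-variable as $D(m_1,a_{r+2},\dots,a_{n+1})$ with $m_1\in\mathcal{P}^1_{r+1}(F)\backslash G_{r+1}(\mathbb{A})$, subject to $\det(m_1)\prod a_i=1$. We write the $G_n$-variable as $D(m_2,b_r,\dots,b_n)$ with $m_2\in[G_{r-1}]$. The integrand is then bounded by $|f_{N_{2r,2n+1,\psi'}}(\iota'(m^1k_1,m^2k_2))|$, times $|\Phi(b_ne_nk_2)|$, times the inverse modulus characters, times $|m_2|^{1/2}\prod|b_i|^{1/2}$.

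\textbf{Decay estimate.} The upper-left $2r\times 2r$ block of $\iota'(m^1,m^2)$ is $\iota'(m_1,m_2)$, which sits in $G_{2r}$ with $m_1\in G_{r+1}$ and $m_2\in G_{r-1}$. We apply \Cref{important lemma}(1) to the parabolic $P_{2r,2n+1}$ and the character $l$ induced by $\psi'$. The coordinates of $\textnormal{Ad}^*(\iota'(m^1,m^2)^{-1})l$ are a vector $\sim b_r^{-1}e_{r+1}m_1$ (via the last row of the $m_1$-block), together with the successive ratios $b_i/a_{i+1}$ and $a_{i+1}/b_{i+1}$ along the remaining staircase. Combining this with the decay $|\Phi(b_ne_nk_2)|\ll\|b_n\|_{\mathbb{A}}^{-N_1}$, we get, exactly as in \Cref{absolute convergence 3}, that for any $N_3$ the integrand is bounded by
\[
\|e_{r+1}m_1\|_{\mathbb{A}^{r+1}}^{-N_3}\,\|m_1\|^{-N_2}\,\|m_2\|^{-N_2}\prod\|a_i\|_{\mathbb{A}}^{-N_3}\prod\|b_i\|_{\mathbb{A}}^{-N_3}\,\delta_{P_{2r,2n+1}}^{-cN_2}
\]
times explicit powers of $|m_1|,|m_2|,|a_i|,|b_i|$.

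\textbf{Factorization and convergence.} The bound factors into one integral over $\mathcal{P}_{r+1}(F)\backslash G_{r+1}(\mathbb{A})$, one over $[G_{r-1}]$, and one-dimensional integrals in the $a_i$ and $b_i$. We then invoke \cite[Corollary 2.3.4]{lu2025periodsdetectingeisensteinseries} for the first two and \Cref{c>1} for the rest. The determinant constraint $\det(m_1)\prod a_i=1$ is harmless. One way to see this is to drop it by integrating over $G_{r+1}$ and inserting $\kappa(|m_1|\prod|a_i|)$ for a compactly supported bump. Alternatively, we can use that the exponent of $|m_1|$ is then free to shift.

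\textbf{Main obstacle: exponent bookkeeping.} The hardest step is matching the exponents so that all factors land in their convergence windows simultaneously. There are no free parameters $s_1,s_2$ here, so the earlier trick of choosing $\textnormal{Re}(s_1)$ large and twisting $f$ by $|g|^{-b}$ is replaced by two levers:
\begin{itemize}
\item[(a)] the free choice of $N_2$ in $\delta_{P_{2r,2n+1}}^{-cN_2}$, which shifts the $|m_1|$- and $|m_2|$-exponents by comparable amounts;
\item[(b)] a twist $f\mapsto f|\cdot|^{-b}$, which under $\iota'$ only shifts the $G_n$-side, since $\det=1$ on $S_{n+1}$, together with the fact that the $S_{n+1}$-Levi exponents may be shifted using the constraint.
\end{itemize}
We expect to choose $N_2$ so that the $m_1$- and $m_2$-exponents lie in $(0,C)$, with the difference bounded by a constant independent of $N_2$, and then check that the $a_i$ and $b_i$ exponents exceed $1$, as in \Cref{absolute convergence 3}.
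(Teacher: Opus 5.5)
Your skeleton (Iwasawa decomposition, \Cref{important lemma}(1) along $P_{2r,2n+1}$, and the twist $f\mapsto f|\cdot|^{-b}$, which only moves the $G_n$-side) agrees with the paper. However, the step you call harmless is where the real work lies, and as written it fails.

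After the Iwasawa decomposition, $m_1$ runs over $\mathcal{P}^1_{r+1}(F)\backslash G_{r+1}(\mathbb{A})$. This is a quotient by the mirabolic of $S_{r+1}$, not of $G_{r+1}$. It fibres over $\mathcal{P}_{r+1}(F)\backslash G_{r+1}(\mathbb{A})$ with fibres $\mathcal{P}^1_{r+1}(F)\backslash \mathcal{P}_{r+1}(F)\simeq F^{\times}$ (via $\det$). Every factor of your majorant that depends on $m_1$ alone is $\mathcal{P}_{r+1}(F)$-invariant: $\|e_{r+1}m_1\|_{\mathbb{A}^{r+1}}^{-N_3}$, the height $\|m_1\|^{-N_2}$, and the powers of $|m_1|$ coming from $\delta_{P_{2r,2n+1}}^{-cN_2}$ and the modulus characters. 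So the $m_1$-integral you want to split off diverges: on each fibre you are summing a constant over $F^{\times}$. Replacing the domain by $\mathcal{P}_{r+1}(F)\backslash G_{r+1}(\mathbb{A})$ and quoting \cite[Corollary 2.3.4]{lu2025periodsdetectingeisensteinseries} silently drops this sum.

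Neither remedy you offer recovers it. The bump $\kappa(|m_1|\prod|a_i|)$ and the exponent shift only see $|\det m_1|$, whereas the missing decay is in the norm-one directions of the idele $\det m_1$. In addition, bounding an integral over the slice $\det(m_1)\prod a_i=1$ by an unconstrained integral is not an inequality you have justified.

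The only decay in that direction is the adelic factor $\|a\|_{\mathbb{A}}^{-N_3}$ of the coordinate eliminated by the constraint. So the constraint has to be used, not discarded. In your indexing, $\prod_{i=r+2}^{n+1}a_i=\det(m_1)^{-1}$, and this gives
$\prod_{i=r+2}^{n+1}\|a_i\|_{\mathbb{A}}^{-N_3}\ll \|\det(m_1)^{-1}\|_{\mathbb{A}}^{-N_3/2}\prod_{i=r+3}^{n+1}\|a_i\|_{\mathbb{A}}^{-N_3/2}$.
This is exactly what the paper does, with indices shifted. One then splits $G_{r+1}=S_{r+1}\rtimes G_1$, so the domain becomes $\mathcal{P}^1_{r+1}(F)\backslash S_{r+1}(\mathbb{A})\times \mathbb{A}^{\times}$ with the \emph{full} idele group.

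The $S_{r+1}$-part is unfolded to $[S_{r+1}]$ at the cost of a sum over $v\in F^{r+1}$. By \cite[Lemma 2.3.3]{lu2025periodsdetectingeisensteinseries} this sum is bounded by a power of the height, which is absorbed by $\|\cdot\|^{-N_2}$, leaving $\mathrm{vol}([S_{r+1}])$. The $\mathbb{A}^{\times}$-integral converges by \Cref{c>1}, since after $a\mapsto a^{-1}$ its exponent is automatically larger than $1$.

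So no exponent tuning is needed on the $S_{n+1}$-side at all, contrary to your identification of the main obstacle. The twist by $|\cdot|^{-s}$ is used only to make the $|m_2|$-exponent vanish on $[G_{r-1}]$, where \cite[Proposition A.1.1]{Beuzart-Plessis_2021} applies. The same gap appears in your treatment of $Z'_r$, whose $S_{n+1}$-side Levi domain is $S_{r+1}(F)\backslash G_{r+1}(\mathbb{A})$ rather than $[G_{r+1}]$.
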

The proof of this lemma will be given below.

\subsubsection{Proof of \Cref{absolute convergence 4}}\label{proof of 4}
\begin{proof}
    The proof is essentially the same as \Cref{T_N proof}. Let $B_H$ denote the standard Borel subgroup of $H$. By the Iwasawa decomposition $H(\mathbb{A})=B_H(\mathbb{A})K_H$, we need to find a real number $c_N>0$ such that the following integral
    \begin{align}\label{SH1}
        \int_{K_H}\int_{(\mathbb{A}^{\times})^{2n}} & |V_f(\iota'(D(a_0,a_1,\dots,a_n),D(b_1,\dots,b_n))k)| |\Phi(b_ne_nk)| \delta_{B_H}(D(a_0,a_1,\dots,a_n),D(b_1,\dots,b_n))^{-1}\notag \\ & \prod_{i=1}^n|a_i|^{\textnormal{Re}(s_{\lambda})}\prod_{i=1}^n|b_i|^{\textnormal{Re}(s_2)+\frac{1}{2}} \prod_{i=1}^n\mathrm{d}a_i\mathrm{d}b_i\;\mathrm{d}k
    \end{align}
    is absolutely convergent when $s_{\lambda},s_2\in \mathcal{H}_{>c_N}$, where
    $$D(a_0,a_1,\dots,a_n)=\textnormal{diag}(a_0,a_1,\dots,a_n),$$
    $$D(b_1,\dots,b_n)=\textnormal{diag}(b_1,\dots,b_n),$$
    $$a_0=(\prod_{i=1}^na_i)^{-1}$$
    and the modulus character is given by
    $$\delta_{B_H}(D(a_0,a_1,\dots,a_n),D(b_1,\dots,b_n))=\prod_{i=1}^n|a_i|^{-2i}\prod_{i=1}^n|b_i|^{n+1-2i}.$$
    By \Cref{important lemma}(2), for every $N_1\geq 0$,
    $$|V_f(\iota'(D(a_0,a_1,\dots,a_n),D(b_1,\dots,b_n))k)|\ll \prod_{i=1}^n\|\frac{a_i}{b_i}\|_{\mathbb{A}}^{-N_1}\prod_{i=1}^{n-1}\|\frac{b_i}{a_{i+1}}\|_{\mathbb{A}}^{-N_1}\prod_{i=1}^n\|a_i\|^N_{G_1}\|b_i\|^N_{G_1}$$
    for any $(k,a_1,\dots,a_n,b_1,\dots,b_n)\in K_H\times (\mathbb{A}^{\times})^{2n}$. Since $\Phi\in \mathcal{S}(\mathbb{A}^n)$, for all $N_1>0$, we have $|\Phi(b_ne_nk)|\ll\|b_n\|_{\mathbb{A}}^{-N_1}$ for any $(k,b_n)\in K_H\times \mathbb{A}^{\times}$. For any $N_2>0$, there exists $N_1$ such that
    $$\prod_{i=1}^n\|\frac{a_i}{b_i}\|_{\mathbb{A}}^{-N_1}\prod_{i=1}^{n-1}\|\frac{b_i}{a_{i+1}}\|_{\mathbb{A}}^{-N_1}\|b_n\|_{\mathbb{A}}^{-N_1}\ll \prod_{i=1}^n\|a_i\|_{\mathbb{A}}^{-N_2}\prod_{i=1}^n\|b_i\|_{\mathbb{A}}^{-N_2}.$$
    Therefore, \Cref{SH1} is essentially bounded by
    \begin{equation}\label{SH2}
        \prod_{i=1}^n\int_{G_1}\|a_i\|_{\mathbb{A}^{\times}}^N|a_i|^{\textnormal{Re}(s_{\lambda})+2i}\|a_i\|_{\mathbb{A}}^{-N_2}\prod_{i=1}^n\int_{\mathbb{A}^{\times}}\|b_i\|_{G_1}^N|b_i|^{\textnormal{Re}(s_2)-\frac{1}{2}+2i-n}\|b_i\|_{\mathbb{A}}^{-N_2}.
    \end{equation}
    Note that there exists $M>0$ such that $\|x\|_{G_1}\ll \max(|x|^M,|x|^{-M})$. Together with \cite[Lemma 2.2.5]{lu2025periodsdetectingeisensteinseries}, there exists $c_N>0$ such that when $(s_{\lambda},s_2)\in \mathcal{H}_{>c_N}\times \mathcal{H}_{>c_N}$, \Cref{SH2} is absolutely convergent. The continuity and holomorphy follow from the exact same argument as in \Cref{T_N proof}. The proof is complete.
\end{proof}

\subsubsection{Proof of \Cref{absolute convergence 6}}

\begin{proof}
    We first prove the absolute convergence of $Z_r(f,\Phi)$. Instead of proving that the original integral is absolutely convergent, wo consider the following integral:
    $$Z_r(f,\Phi,s)=\int_{\mathcal{P}^1_{r+1}(F)N^1_{r+1,n+1}(\mathbb{A})\backslash S_{n+1}(\mathbb{A})}\int_{G_{r-1}(F)N_{r-1,n}(\mathbb{A})\backslash \textnormal{GL}_n(\mathbb{A})}f_{N_{2r,2n+1,\psi'}}(\iota'(x,y))\Phi(e_ny)|y|^{s+\frac{1}{2}}\mathrm{d}x \mathrm{d}y.$$
    Note that $Z_r(f,\Phi)=Z_r(f,\Phi,0)$. Suppose we can prove that for some special values of $s$, the above integral is absolutely convergent for any $f\in \mathcal{S}([G])$ and $\Phi\in\mathcal{S}(\mathbb{A}^n)$. Consider the map $f\mapsto f|\cdot|^{-s}$ defined on $\mathcal{S}([G])$. Let $f'=f|\cdot|^{-s}$. We have that $f'\in \mathcal{S}([G])$, and $V_{f'}=V_{f}|\cdot|^{-s}$. Therefore,
    $$Z_r(f,\Phi,0)=Z_r(f',\Phi,s)$$
    and the absolute convergence of $Z_r(f,\Phi)$ is proved. So our aim is to find some special values of $s$ such that $Z_r(f,\Phi,s)$ is absolutely convergent for any $f\in \mathcal{S}([G])$ and $\Phi\in\mathcal{S}(\mathbb{A}^n)$.
    
    By the Iwasawa decomposition $S_{n+1}(\mathbb{A})=P_{r+1,n+1}^1(\mathbb{A})K^1$ and $G_n(\mathbb{A})=P_{r,n}(\mathbb{A})K^2$, we only need to show the absolute convergence of the following integral:
    \begin{align}\label{SH3}
        &\int_{K^1}\int_{(\mathbb{A}^{\times})^{n-r-1}}\int_{\mathcal{P}_{r+1}^1(F)\backslash G_{r+1}(\mathbb{A})}\int_{K^2}\int_{(\mathbb{A}^{\times})^{n-r+1}} \int_{[G_{r-1}]} \notag \\ & f_{N_{2r,2n+1,\psi'}}(\iota'(D(h_1, a_{r+1},a_{r+2},\dots,a_n)k^1,D(h_2,b_r,\dots,b_n)k^2)) \notag \\
        & \delta_{P_{r+1,n}^1}(D(h_1, a_{r+1},a_{r+2},\dots,a_n))^{-1}\delta_{P_{r-1,n}}(D(h_2,b_r,\dots,b_n))^{-1} \notag \\ & |\Phi(e_nb_nk^2)|\;|h_2|^{\textnormal{Re}(s)+\frac{1}{2}}\prod_{i=r}^n|b_i|^{\textnormal{Re}(s)+\frac{1}{2}} \;\;\;\mathrm{d}h_1\mathrm{d}k^1\prod_{i=r+2}^n\mathrm{d}a_i \mathrm{d}h_2\mathrm{d}k^2\prod_{i=r}^n\mathrm{d}b_i
    \end{align}
    where $a_{r+1}=\textnormal{det}(h_1)^{-1}\prod_{i=r+2}^n a_i^{-1}$.

    Let $m_1=D(h_1, a_{r+1},a_{r+2},...,a_n)$ and $m_2=D(h_2,b_r,...,b_n)$. Set $m=\iota'(m_1,m_2)\in M_{P_{2r,2n+1}}$. By \Cref{important lemma}(1), we can see that there exists $c>0$ such that, for any $(k^1,k^2)\in K^1\times K^2$ and $N_1,N_2\geq 0$,
    $$|f_{N_{2r,2n+1,\psi'}}(\iota'(m_1k^1,m_2k^2))|\ll \|\textnormal{Ad}^*(m^{-1})l\|_{\mathbb{A}^{2r}}^{-N_1}\|m\|_{M_{P_{2r,2n+1}}}^{-N_2}\delta_{P_{2r,2n+1}}(m)^{-cN_2},$$
    where $l=(0,...,0,1,1,...1)\in \mathbb{A}^{2n}$ (the first $1$ appears at the $2r$-th position). Therefore, 
    $$\|\textnormal{Ad}^*(m^{-1})l\|_{\mathbb{A}^{2r}}^{-N_1}\sim \|b_r^{-1}e_{r+1}h_1\|_{\mathbb{A}^{r+1}}^{-N_1}\prod_{i=r+1}^n\|a_ib_i^{-1}\|_{\mathbb{A}}^{-N_1}\prod_{i=r}^{n-1}\|b_ia_{i+1}^{-1}\|_{\mathbb{A}}^{-N_1}.$$

    Since $|\Phi(e_nb_nk^2)|\ll \|b_n\|_{\mathbb{A}}^{-N'}$ for any $k^2\in K^2$ and $N'\geq 0$, we can deduce that for any $N_3>0$, there exists $N_1>0$ such that
    $$\|b_r^{-1}e_{r+1}h_1\|_{\mathbb{A}^{r+1}}^{-N_1}\prod_{i=r+1}^n\|a_ib_i^{-1}\|_{\mathbb{A}}^{-N_1}\prod_{i=r}^{n-1}\|b_ia_{i+1}^{-1}\|_{\mathbb{A}}^{-N_1}\Phi(e_nb_nk^2)\ll \|e_{r+1}h_1\|_{\mathbb{A}^{r+1}}^{-N_3}\prod_{i=r+1}^n\|a_i\|_{\mathbb{A}}^{-N_3}\prod_{i=r}^{n}\|b_i\|_{\mathbb{A}}^{-N_3}$$
    Also, we can see that
    $$\|m\|_{M_{P_{2r,2n+1}}}^{-N_2}\sim \|h_1\|_{G_{r+1}}^{-N_2}\|h_2\|_{G_{r-1}}^{-N_2}\prod_{i=r+1}^n\|a_i\|_{G_1}^{-N_2}\prod_{i=r}^n\|b_i\|_{G_1}^{-N_2}<\|h_1\|_{G_{r+1}}^{-N_2}\|h_2\|_{G_{r-1}}^{-N_2}.$$
    This is because $M_{P_{r+1,n+1}}\times M_{P_{r-1,n}}$ embedded into $M_{P_{2r,2n+1}}$ as a subgroup, and by \cite[Proposition A.1.1]{Beuzart-Plessis_2021} we can get the desired result.

    Let us write out the explicit formulas of all the different modulus character.
    \begin{align}
        & \delta_{P_{2r,2n+1}}(\iota'(m_1,m_2)  = |h_1|^{2n-2r+1}|h_2|^{2n-2r+1}\prod_{i=r+1}^n|a_i|^{2n+2-4i}\prod_{i=r}^n|b_i|^{2n-4i}; \\ & \delta_{P_{r+1,n+1}^1}(D(h_1,a_{r+1},...,a_n))  =|h_1|^{n-r}\prod_{i=r+1}^n|a_i|^{n-2i}; \\ & \delta_{P_{r-1,n}}(D(h_2,b_r,...,b_n))  = |h_2|^{n-r+1}\prod_{i=r}^n|b_i|^{n+1-2i}.
    \end{align}
    Therefore, \Cref{SH3} is essentially bounded by the product of 
    \begin{align}\label{1}
        & \int_{(\mathbb{A}^{\times})^{n-r-1}}\int_{\mathcal{P}_{r+1}^1(F)\backslash G_{r+1}(\mathbb{A})} \|e_{r+1}h_1\|_{\mathbb{A}^{r+1}}^{-N_3}\prod_{i=r+1}^n\|a_i\|_{\mathbb{A}}^{-N_3} \|h_1\|_{G_{r+1}}^{-N_2}\notag \\ & |h_1|^{-cN_2(2n-2r+1)+r-n}\prod_{i=r+1}^n|a_i|^{-cN_2(2n+2-4i)+2i-n} \;\;\;\;\mathrm{d}h_1\prod_{i=r+2}^n\mathrm{d}a_i 
    \end{align}
    and
    \begin{align}\label{2}
        & \int_{(\mathbb{A}^{\times})^{n-r+1}}\int_{[G_{r-1}]} \prod_{i=r}^n\|b_i\|_{\mathbb{A}}^{-N_3}\|h_2\|_{G_{r-1}}^{-N_2}\notag \\ & |h_2|^{-cN_2(2n-2r+1)-n+r-\frac{1}{2}+\textnormal{Re}(s)}\prod_{i=r}^n|b_i|^{-cN_2(2n-4i)-n+2i-\frac{1}{2}+\textnormal{Re}(s)}\;\;\;\;  \mathrm{d}h_2\prod_{i=r}^n\mathrm{d}b_i
    \end{align}
    for any $N_2,N_3>0$. We will prove the absolute convergence of \Cref{1} and \Cref{2} respectively, for some suitable choices of $N_2$ and $N_3$.

    By the definition of $a_{r+1}$, we can get $\prod_{i=r+1}^na_{i}=\textnormal{det}(h_1)^{-1}$. Taking $N_4=N_3/2$, we can see that 
    $$\prod_{i=r+1}^n\|a_i\|_{\mathbb{A}}^{-N_3}\ll \|\textnormal{det}(h_1)^{-1}\|_{\mathbb{A}}^{-N_4}\prod_{i=r+2}^n\|a_i\|_{\mathbb{A}}^{-N_4}.$$ 
    Since $\|e_{r+1}h_1\|_{\mathbb{A}^{r+1}}\geq1$, \Cref{1} is essentially bounded by
    \begin{align}\label{3}
        & \int_{(\mathbb{A}^{\times})^{n-r-1}}\int_{\mathcal{P}_{r+1}^1(F)\backslash G_{r+1}(\mathbb{A})} \|e_{r+1}h_1\|_{\mathbb{A}^{r+1}}^{-N_4}\|\textnormal{det}(h_1)^{-1}\|_{\mathbb{A}}^{-N_4}\prod_{i=r+2}^n\|a_i\|_{\mathbb{A}}^{-N_4} \|h_1\|_{[\textnormal{GL}_{r+1}]}^{-N_2}\notag \\ & |h_1|^{-cN_2(2r+3)-r-2}\prod_{i=r+2}^n|a_i|^{4cN_2(i-r-1)+2(i-r-1)} \;\;\;\;\mathrm{d}h_1\prod_{i=r+2}^n\mathrm{d}a_i.
    \end{align}
    where $N_2,N_4>0$ can be picked arbitrarily.

    We can see that $G_{r+1}=S_{r+1}\rtimes G_1$, by embedding $G_1$ in the upper left corner. Thus, \Cref{3} is essentially bounded by the product of the following two integrals:
    \begin{equation}\label{5}
        \int_{G_1(\mathbb{A})}\int_{\mathcal{P}_{r+1}^1(F)\backslash S_{r+1}(\mathbb{A})}\|e_{r+1}h_1'a\|_{\mathbb{A}^{r+1}}^{-N_4}\|h_1'a\|_{G_{r+1}}^{-N_2} \|a^{-1}\|_{\mathbb{A}}^{-N_4}|a|^{-cN_2(2r+3)-r-2}\;\mathrm{d}h_1'\mathrm{d}a;
    \end{equation}
    \begin{equation}\label{6}
        \prod_{i=r+2}^n\int_{\mathbb{A}^{\times}}\|a_i\|_{\mathbb{A}}^{-N_4}|a_i|^{4cN_2(i-r-1)+2(i-r-1)}\mathrm{d}a_i.
    \end{equation}

    Note that \Cref{5} can be rewritten as:
    \begin{align}\label{4}
    \int_{G_1(\mathbb{A})}\int_{[S_{r+1}]} & \sum_{\gamma\in \mathcal{P}^1_{r+1}(F)\backslash S_{r+1}(F)}\|e_{r+1}\gamma h_1''a\|_{\mathbb{A}^{r+1}}^{-N_4}\|h_1''a\|_{G_{r+1}}^{-N_2} \|a^{-1}\|_{\mathbb{A}}^{-N_4}|a|^{-cN_2(2r+3)-r-2}\;\mathrm{d}h_1''\mathrm{d}a\notag \\ & \ll \int_{G_1(\mathbb{A})}\int_{[S_{r+1}]}\sum_{v\in F^{r+1}}\|v h_1''a\|_{\mathbb{A}^{r+1}}^{-N_4}\|h_1''a\|_{G_{r+1}}^{-N_2} \|a^{-1}\|_{\mathbb{A}}^{-N_4}|a|^{-cN_2(2r+3)-r-2}\;\mathrm{d}h_1''\mathrm{d}a.
    \end{align}

    By \cite[Lemma 2.3.3]{lu2025periodsdetectingeisensteinseries}, there exist $M>0$ and $N_0>0$ such that for any $N_4\geq N_0$, we have
    $$\sum_{v\in F^{r+1}}\|vh_1''a\|_{\mathbb{A}^{r+1}}^{-N_4}\ll \|h_1''a\|_{G_{r+1}}^M.$$
    Choosing $N_2>M$, we get
    $$\sum_{v\in F^{r+1}}\|v h_1''a\|_{\mathbb{A}^{r+1}}^{-N_4}\|h_1''a\|_{G_{r+1}}^{-N_2}\ll \|h_1''a\|_{G_{r+1}}^{-N_2+M}\ll 1.$$
    Therefore, \Cref{4} is essentially bounded by
    $$\int_{G_1(\mathbb{A})}\int_{[S_{r+1}]} \|a^{-1}\|_{\mathbb{A}}^{-N_4}|a|^{-cN_2(2r+3)-r-2}\;\mathrm{d}h_1'\mathrm{d}a\ll \textnormal{vol}([S_{r+1}])\cdot \int_{G_1(\mathbb{A})} \|a^{-1}\|_{\mathbb{A}}^{-N_4}|a|^{-cN_2(2r+3)-r-2}\;\mathrm{d}a.$$
    Replace $a^{-1}$ by $a$, and together with \Cref{c>1}, for $N_4$ large enough, \Cref{5} is absolutely convergent. Similarly, \Cref{6} is also absolutely convergent for $N_4\gg 0$. In other words, for $N_2$ and $N_3$ large enough, \Cref{1} is absolutely convergent.

    Next let us take a look at \Cref{2}. Set $\textnormal{Re}(s)=cN_2(2n-2r+1)+n-r+1/2$. Therefore, we can see that \Cref{2} is essentially bounded by the product of the following integrals:
    \begin{equation}\label{7}
        \int_{[G_{r-1}]}\|h_2\|_{G_{r-1}}^{-N_2}\;\mathrm{d}h_2;
    \end{equation}
    \begin{equation}\label{8}
        \prod_{i=r}^n\int_{\mathbb{A}^{\times}}\|b_i\|_{\mathbb{A}}^{-N_3}\|b_i\|_{\mathbb{A}^{\times}}^{-N_2}|b_i|^{cN_2(4i-2r+1)+2i-r}\;\mathrm{d}b_i.
    \end{equation}
    By \cite[Proposition A.1.1]{Beuzart-Plessis_2021}, there exists $d>0$ such that for any $N_2\geq d$, \Cref{7} is absolutely convergent. By \Cref{c>1} again, \Cref{8} is also absolutely convergent for $N_3\gg 0$.

    In conclusion, we can pick $N_2>\textnormal{max}(M,d)$ and $N_3\gg 0$, such that \Cref{5}, \Cref{6}, \Cref{7} and \Cref{8} are all absolutely convergent. And then for all complex number $s$ such that $\textnormal{Re}(s)=cN_2(2n-2r+1)+n-r+1/2$, we can get that $Z_r(f,\Phi,s)$ is absolutely convergent from the above discussion. Therefore, $Z_r(f,\Phi)$ is absolutely convergent. The absolute convergence of $Z_r'(f,\Phi)$ can be proved similarly. The proof is complete.
\end{proof}
\subsubsection{Proof of \Cref{absolute convergence 5}}

\begin{proof}
    Let $B$ denote the standard Borel subgroup of $G$. As the same argument in \Cref{proof of 4}, we need to prove that \Cref{SH1} is absolutely convergent for any $s_{\lambda},s_2\in \mathbb{C}$, $f\in \mathcal{S}([G])$ and $\Phi\in\mathcal{S}(\mathbb{A}^n)$. By \Cref{important lemma}(1), we have the following estimate:
    \begin{align*}
        |V_f(\iota'(D(a_0,a_1,\dots,a_n),D(b_1,\dots,b_n))k)|\ll & \prod_{i=1}^n\|\frac{a_i}{b_i}\|_{\mathbb{A}}^{-N_1}\prod_{i=1}^{n-1}\|\frac{b_i}{a_{i+1}}\|_{\mathbb{A}}^{-N_1}\prod_{i=1}^n\|a_i\|^{-N_2}_{G_1}\|b_i\|^{-N_2}_{G_1} \\
        & \delta_B(D(a_0,a_1,\dots,a_n),D(b_1,\dots,b_n))^{-cN_2}
    \end{align*}
    where
    $$\delta_B(D(a_0,a_1,\dots,a_n),D(b_1,\dots,b_n))=\prod_{i=1}^n|a_i|^{2-4i}|b_i|^{2n-4i}.$$

    By the same argument in \Cref{proof of 4}, for any $N_3\geq 0$, we can find $N_1\geq 0$ such that 
    $$\prod_{i=1}^n\|\frac{a_i}{b_i}\|_{\mathbb{A}}^{-N_1}\prod_{i=1}^{n-1}\|\frac{b_i}{a_{i+1}}\|_{\mathbb{A}}^{-N_1}\|b_n\|_{\mathbb{A}}^{-N_1}\ll \prod_{i=1}^n\|a_i\|_{\mathbb{A}}^{-N_3}\prod_{i=1}^n\|b_i\|_{\mathbb{A}}^{-N_3}.$$
    Therefore, \Cref{SH1} is essentially bounded by
    \begin{equation}\label{SH4}
        \prod_{i=1}^n\int_{G_1}\|a_i\|_{\mathbb{A}^{\times}}^{-N_2}|a_i|^{cN_2(4i-2)+\textnormal{Re}(s_{\lambda})+2i}\|a_i\|_{\mathbb{A}}^{-N_3}\prod_{i=1}^n\int_{\mathbb{A}^{\times}}\|b_i\|_{G_1}^{-N_2}|b_i|^{-cN_2(2n-4i)+\textnormal{Re}(s_2)-\frac{1}{2}+2i-n}\|b_i\|_{\mathbb{A}}^{-N_3}.
    \end{equation}
    Now fix $s_{\lambda}$. We can choose $N_2$ large enough such that $$cN_2(4i-2)+\textnormal{Re}(s_{\lambda})+2i>1$$
    for any $1\leq i\leq n$. Fix this $N_2$. Then we can choose a real number $b$ such that for any $s_2\in \mathcal{H}_{>b}$ and $1\leq i\leq n$, we have 
    $$-cN_2(2n-4i)+\textnormal{Re}(s_2)-\frac{1}{2}+2i-n \geq 1.$$
    Then by \cite[Lemma 2.2.5]{lu2025periodsdetectingeisensteinseries}, we can find $N_3$ large enough such that \Cref{SH4} is absolutely convergent. In conclusion, what we just proved is that for any $\lambda\in \mathfrak{a}_{P_{1,n}^1,\mathbb{C}}^*$, there exists a real number $b_{\lambda}$ such that $Z(f,\Phi,\lambda,s_2)$ is absolutely convergent when $s_2\in \mathcal{H}_{>b_{\lambda}}$. Now by using the `$f\mapsto f|\cdot|^{-s}$' technique we used in the last proof, we prove that $Z(f,\Phi,\lambda,s_2)$ is absolutely convergent for any $f\in\mathcal{S}([G])$, $\Phi\in \mathcal{S}(\mathbb{A}^n)$, $\lambda\in \mathfrak{a}_{P_{1,n}^1,\mathbb{C}}^*$ and $s_2\in \mathbb{C}$. Moreover, the above estimate is uniform when $(\lambda,s_2)$ ranges over a compact subset of $\mathfrak{a}_{P_{1,n}^1,\mathbb{C}}^*\times \mathbb{C}$. Hence the integral converges locally uniformly and defines an entire function of $(\lambda,s_2)$.

    Finally, the continuity follows from the fact that the map $f\mapsto f|\cdot|^{-s}$ is continuous for any $s\in \mathbb{C}$ and the implied constant in the Whittaker estimate is controlled by a continuous seminorm on $\mathcal{S}([G])$, as follows from \Cref{important lemma}(2). The proof is now complete.
\end{proof}

\subsection{Unfolding}\label{proof of section 5}

In this section, we develop a similar unfolding equation as in \Cref{unfolding twisted BF}:
\begin{prop}
    Let $\chi\in \mathfrak{X}_{\Delta_1^*}(G)$ be a $\Delta_1^*$-regular cuspidal datum. Then for any $(f,\Phi)\in \mathcal{S}_{\chi}([G])\times \mathcal{S}(\mathbb{A}^n)$, we have
    $$\mathcal{P}(f,\Phi)=Z_{n}'(f,\Phi)=Z_{n}(f,\Phi)=\cdots=Z_{1}'(f,\Phi)=Z_{1}(f,\Phi)=Z(f,\Phi,0,0).$$
\end{prop}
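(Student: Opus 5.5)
We follow the pattern of \Cref{unfolding twisted BF} and split the chain into four kinds of steps: (1) $\mathcal{P}(f,\Phi)=Z_n'(f,\Phi)$; (2) $Z_{r+1}(f,\Phi)=Z_r'(f,\Phi)$ for $1\leq r\leq n-1$; (3) $Z_r'(f,\Phi)=Z_r(f,\Phi)$ for $1\leq r\leq n$; (4) $Z_1(f,\Phi)=Z(f,\Phi,0,0)$. All manipulations of integrals are justified by \Cref{absolute convergence 6} and \Cref{absolute convergence 5}. In each of (1)--(3), a Fourier expansion or theta unfolding produces the next zeta integral plus a degenerate term. We then show that every degenerate term vanishes on $\mathcal{S}_{\chi}([G])$ when $\chi$ is $\Delta_1^*$-regular.

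For (1), expand $\Theta(y,\Phi)=\sum_{v\in F^n}\Phi(vy)|y|^{1/2}$. The orbit $v\neq 0$ unfolds to $\mathcal{P}_n(F)\backslash G_n$, which is $Z_n'$ since $N^1_{n+1,n+1}$ and $N_{n,n}$ are trivial and $f_{N_{2n+1,2n+1,\psi'}}=f$. The orbit $v=0$ leaves $\Phi(0)\int_{[S_{n+1}]\times[G_n]}f(\iota'(x,y))|y|^{1/2}\,\mathrm{d}x\,\mathrm{d}y$.

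For (2) and (3), use the one-column unipotent groups $U_{2r+1}$ and $U_{2r}$ exactly as in the proof of (2) and (3) of \Cref{unfolding twisted BF}. The columns sitting inside $\iota'(S_{r+1}\times 1)$ or $\iota'(1\times G_r)$ are integrated over, and the Fourier expansion on the quotient gives the nontrivial-character orbits, indexed by $\mathcal{P}_r(F)\backslash G_r(F)$ or $\mathcal{P}^1_{r+1}(F)\backslash S_{r+1}(F)$. Because $\psi'|_{N_{k,2n+1}}=\psi|_{N_{k,2n+1}}$ for $k\geq 2$, these orbits produce the next zeta integral. The trivial character produces a constant term.

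Step (4) differs: at $r=1$ we pass from $f_{N_{2,2n+1},\psi'}$ to $V_f$. Since $\psi'$ is trivial on $u_{1,2}$, the first column is integrated against the trivial character. The $x$-integral over $\mathcal{P}^1_2(F)N^1_{2,n+1}(\mathbb{A})\backslash S_{n+1}(\mathbb{A})$ must be rewritten as an integral over $N^1_{n+1}(\mathbb{A})\backslash S_{n+1}(\mathbb{A})$. I would do this via $\mathcal{P}^1_2=N^1_2$ (the unipotent $\begin{pmatrix}1&*\\&1\end{pmatrix}$ inside $S_2$) and collapse $[N_2^1]$ into the $u_{1,2}$-integration of $V_f$. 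This step should be a formal identity with no boundary term, and I will check that the measures match.

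The degenerate terms have the form of a constant term $(f_{N_{k,2n+1,\psi'}})_{U_k}$ integrated over an $S_{r+1}\times G_r$ (or $S_{r+1}\times G_{r-1}$, resp. $S_{n+1}\times G_n$ in (1)) period of the upper-left block of the Levi of $P_{(2r,2n+1-2r)}$ or $P_{(2r+1,2n-2r)}$, together with Whittaker-type integrals on the lower right block. Following \Cref{F_r=0}, I truncate by $\kappa_j\circ H_P$, restrict to the Levi, and use dominated convergence. This reduces everything to the following vanishing statement: for $\chi'$ a $\Delta_1^*$-regular datum of $G_{2r+1}$ (resp. $G_{2r+2}$), and $f\in\mathcal{S}_{\chi'}$, the period $\int_{[S_{r+1}]\times[G_r]}f(\iota'(x,y))|y|^{s}$ (resp. over $[S_{r+2}]\times[G_r]$) vanishes for all $s$. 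We also need to check that $\Delta_1^*$-regularity passes to the Levi factor.

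The main obstacle is proving this vanishing statement. I would first write $G_{r+1}=S_{r+1}\rtimes G_1$ and use Fourier inversion in the $G_1$-direction, the reverse of the trick in \Cref{lemma 5}. This turns the $S_{r+1}$-period into an integral over $t$ of $G_{r+1}\times G_r$ periods twisted by $|x|^{it}$. Those are then analyzed with the double cosets $P\backslash G/H$ of \cite{Matringe+2015+119+170}, exactly as in \Cref{lemma 1} and \Cref{lemma 2}.

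Cuspidality kills all orbits except the "closed" ones. A surviving orbit either pairs $\pi_i$ with $\pi_j$, which condition (1) kills, or puts $\pi_i$ on a diagonal block, which \cite{matringe2025intertwining}, \cite{xue2025twisted} or condition (2) kills. The characters that appear are now $|\det x|^{it}$ restricted to the $S$-part, i.e. effectively the $\eta$ is free. The genuinely new orbits are those where a $G_1$-block $\eta_i$ sits alone in the extra "$+1$" slot of the odd-dimensional side. There the $S$-condition forces integrating the remaining blocks against $\eta_i^{-1}$, and $(\eta_i,\Delta^*)$-regularity of $\chi^i$ gives the vanishing. Making this bookkeeping precise, and seeing that the $\Delta_1^*$ condition is exactly what is needed, is where I expect the real work to lie.
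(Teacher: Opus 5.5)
Your four-step structure, the theta and one-column Fourier expansions, the collapse of $[N^1_2]$ in step (4), and the $\kappa_j$-truncation that reduces each boundary term to an $S_{r+1}\times G_r$ (resp.\ $S_{r+2}\times G_r$) period all match the paper.

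The gap is in the vanishing statement itself, i.e.\ the paper's \Cref{lemma 3} and \Cref{lemma 4}. Fourier inversion in $t\in\mathbb{R}$ against $|\det x|^{it}$ recovers only the period over $[G_{r+1}]^1\times[G_r]$, not the period over $[S_{r+1}]\times[G_r]$. The reason is that $[S_{r+1}]$ is the fibre over $1$ of $\det:[G_{r+1}]^1\to F^{\times}\backslash\mathbb{A}^1$. That base is a nontrivial compact group, and vanishing of the fibre integral averaged over the base does not give vanishing of the fibre integral. You have to disintegrate along $\det$ and Fourier-invert on the full idele class group. Concretely, you must prove $\int_{[G_{r+1}]\times[G_r]}f(\iota'(x,y))\eta^{-1}(\det x)|y|^{1/2}\,\mathrm{d}x\,\mathrm{d}y=0$ for every unitary Hecke character $\eta$.

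This is not cosmetic. Take $G_3$, $H=S_2\times G_1$ and $\chi=(G_1^3,\eta_1\boxtimes\eta_2\boxtimes\eta_3)$ with $\eta_1$ nontrivial on $\mathbb{A}^1$. Look at the orbit where $\eta_1$ occupies the extra ``$+$'' slot and $\eta_2,\eta_3$ are paired.
\begin{itemize}
\item Every $|\cdot|^{it}$-twisted contribution there vanishes trivially, because $\int_{F^{\times}\backslash\mathbb{A}^1}\eta_1=0$.
\item The $\eta_1$-twisted contribution, which the $S_2$-period does see, is governed by $\eta_2\eta_3\eta_1^{-1}|_{\mathbb{A}^1}$ and can be nonzero.
\end{itemize}
This is exactly the configuration that $\Delta_1^*$-regularity excludes, so your $\mathbb{R}$-inversion never sees the condition the hypothesis is designed for. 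Your remark that ``the $S$-condition forces integrating the remaining blocks against $\eta_i^{-1}$'' is the right intuition. But after an $\mathbb{R}$-inversion there is no $S$-condition left to do the forcing.

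Once all $\eta$ are in play, the orbit bookkeeping also has to change. You cannot let conditions (1) and (2) of \Cref{definition} kill the pairings and diagonal blocks of $\chi$ itself. The datum $\chi$ satisfies no $(\eta,\Delta^*)$-condition for an arbitrary $\eta$; only $\chi^i$ does, and only relative to $\eta_i$. The paper's argument instead pivots on a single block:
\begin{itemize}
\item The count $\sum_k r^+_{k,k}=\sum_k r^-_{k,k}+1$ forces some $i$ with $r_{i,i}=r_i$ and $r^+_{i,i}>r^-_{i,i}$.
\item If $r_i\geq 2$, that factor vanishes by \cite[Lemma 7.1]{matringe2025intertwining}.
\item If $r_i=1$, the factor is $\int_{F^{\times}\backslash\mathbb{A}^1}\eta^{-1}\eta_i$, which vanishes unless $\eta=\eta_i|\cdot|^c$.
\item In that last case the remaining blocks form a balanced $G_r\times G_r$ configuration in $G_{2r}$, twisted by $\eta_i^{-1}$ up to $|\cdot|^c$. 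The $(\eta_i,\Delta^*)$-regularity of $\chi^i$ is insensitive to $|\cdot|^c$ by \Cref{Remark 1}, and the argument of \Cref{lemma 1} then kills it.
\end{itemize}
For the $S_{r+2}\times G_r$ case the imbalance is $+2$, and \Cref{lemma 2} replaces \Cref{lemma 1}.
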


We first fix a Schwartz function $\Phi\in \mathcal{S}(\mathbb{A}^n)$. The proof is divided into four parts:
\begin{itemize}
    \item[(1)] $\mathcal{P}(f,\Phi)=Z'_{n}(f,\Phi)$;
    \item[(2)] $Z_{r+1}(f,\Phi)=Z_{}'(f,\Phi)$ for any $1\leq r\leq n-1$;
    \item[(3)] $Z_{r}'(f,\Phi)=Z_{r}(f,\Phi)$ for any $1\leq r\leq n$;
    \item[(4)] $Z_{1}(f,\Phi)=Z(f,\Phi,0,0)$.
\end{itemize}

We will start with the proof of the last equation:
\begin{proof}[Proof of \textnormal{(4)}]
    Let $U_2(\mathbb{A})=\{I_{n+1}+aE_{1,2}|a\in \mathbb{A}\}\subset S_{n+1}$. We have
\begin{align*}
    Z_1(f,\Phi) & =\int_{\mathcal{P}^1_{2}(F)N^1_{2,n+1}(\mathbb{A})\backslash S_{n+1}(\mathbb{A})}\int_{G_0(F)N_{0,n}(\mathbb{A})\backslash \textnormal{GL}_n(\mathbb{A})}f_{N_{2,2n+1,\psi'}}(\iota'(x,y))\Phi(e_ny)|y|^{1/2}\mathrm{d}x \mathrm{d}y \\
    & = \int_{N^1_{1,n+1}(\mathbb{A})\backslash S_{n+1}(\mathbb{A})}\int_{N_{0,n}(\mathbb{A})\backslash \textnormal{GL}_n(\mathbb{A})}\int_{[U_2]}f_{N_{2,2n+1,\psi'}}(\iota'(ux,y))\Phi(e_ny)|y|^{1/2}\mathrm{d}u\mathrm{d}x \mathrm{d}y \\
    & =\int_{N^1_{n+1}(\mathbb{A})\backslash S_{n+1}(\mathbb{A})}\int_{N_{n}(\mathbb{A})\backslash \textnormal{GL}_n(\mathbb{A})}f_{N_{1,2n+1,\psi'}}(\iota'(ux,y))\Phi(e_ny)|y|^{1/2}\mathrm{d}x \mathrm{d}y \\
    & = \int_{N^1_{n+1}(\mathbb{A})\backslash S_{n+1}(\mathbb{A})}\int_{N_{n}(\mathbb{A})\backslash \textnormal{GL}_n(\mathbb{A})}V_f(\iota'(x,y))\Phi(e_ny)|y|^{1/2}\mathrm{d}x \mathrm{d}y \\
    & = Z(f,\Phi,0,0).
\end{align*}
The proof is complete.
\end{proof}

Before we move on to the proof of the other three equations, we first need the following two lemmas:
\begin{lemma}\label[lemma]{lemma 3}
    Fix a positive integer $r$. Let $G=G_{2r+1}$, $H=S_{r+1}\times G_r$ and we embed $H$ into $G$ via $\iota'$. Let $\chi\in \mathfrak{X}_{\Delta_1^*}(G)$ be a $\Delta_1^*$-regular cuspidal datum. Then for any $f\in \mathcal{S}_{\chi}([G])$ and $\Phi\in \mathcal{S}(\mathbb{A}^n)$, we have
    \begin{equation}\label{vanishing by 1}
    \int_{[H]}f(\iota'(x,y))|y|^{\frac{1}{2}}\mathrm{d}x\mathrm{d}y=0.
    \end{equation}
\end{lemma}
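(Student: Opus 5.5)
We follow the template of \Cref{lemma 1}. First, by \cite[Lemma 2.5.2]{lu2025periodsdetectingeisensteinseries} and continuity of the functional on $\mathcal{S}_{\chi}([G])$, we may assume $f$ is a pseudo-Eisenstein series $f=\sum_{\gamma\in P(F)\backslash G(F)}\phi(\gamma g)$ with $\phi\in\mathcal{S}_{\textnormal{Ind}\chi}([G]_P)$, where $\chi=(M,\pi)$, $M=\prod_i G_{r_i}$. Unfolding over $P(F)\backslash G(F)/H(F)$ and replacing $\phi$ by right translates, it suffices to prove that for each double coset representative $w$,
$$\int_{[P\cap wHw^{-1}]}\phi(h)\,\xi(h)\,\mathrm{d}h=0,$$
where $\xi$ is the conjugate of the character $(x,y)\mapsto |y|^{1/2}$. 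The first key point is to describe these double cosets. Note $\iota'(S_{r+1}\times G_r)$ has index-type relation with $\iota(G_{r+1}\times G_r)$: it is (up to permuting the first two coordinates, which is an element of $G(F)$) the subgroup of $\iota(G_{r+1}\times G_r)$ with $\det x=1$. Hence $H$ is the derived-type subgroup of the symmetric subgroup $H^\sharp=\iota(G_{r+1}\times G_r)$, and $P\backslash G/H^\sharp$ is classified by \cite[Proposition 3.2]{Matringe+2015+119+170} via data $s_w=\{r_{i,j},(r^+_{k,k},r^-_{k,k})\}$ with $\sum r^+_{k,k}=\sum r^-_{k,k}+1$. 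Since $H^\sharp=H\cdot\iota(T_1\times 1)$ with $T_1$ a rank-one torus normalizing $H$ and the $P$-orbits are $T_1$-stable up to $P$, I would argue the $H$-orbits on $P\backslash G$ are the same (or at least are indexed by the same $s_w$, with stabilizers $P\cap wHw^{-1}$ equal to the $\det x=1$ part of $P\cap wH^\sharp w^{-1}$).

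As in \Cref{claim 1}, if $N'_{s_w}\ne 1$, the surjection of $N_{s_w}\cap wHw^{-1}$ onto $N'_{s_w}$ (Prop.~3.5 of loc.~cit., unaffected by the determinant condition since it concerns unipotents) and cuspidality of $\phi|_M$ give vanishing. So assume each $r_i$ equals some $r_{i,j}$. Then $M\cap wH^\sharp w^{-1}$ is the product of blocks $G_{r^{\pm}_{k,k}}$ and diagonal $G_{r_{i,j}}$'s, and $M\cap wHw^{-1}$ is cut out by $\det(m|_{V^+})=1$, i.e.\ $\prod_k\det m^+_{k,k}\prod_{i<j}\det m_{i,j}=1$. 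To handle this determinant constraint, I would Fourier expand along the one-dimensional quotient: writing the integral over $[M\cap wHw^{-1}]$ as an integral over $[M\cap wH^\sharp w^{-1}]$ against characters $\mu(\det x)$ with $\mu$ ranging over Hecke characters of $[G_1]$ (Fourier inversion on $[G_1]$, as in the proof of \Cref{lemma 5}), the integral becomes a superposition of integrals of the form treated in \Cref{lemma 2} with $\eta$ replaced by $\mu^{-1}$. Thus it suffices to show each product of block integrals vanishes for every $\mu$.

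Now the case analysis: since $\sum r^+=\sum r^-+1$, there must be some block $k$ with $r^+_{k,k}>r^-_{k,k}$. If $r_k\ge 2$, the $(G_{r^+}\times G_{r^-})$-period of a cusp form with $r^+\neq r^-$ vanishes by \cite[Lemma 7.1]{matringe2025intertwining} for any twist, so no regularity is needed. The remaining case is $r_k=1$ with $r^+_{k,k}=1$, i.e.\ a $G_1$-block $\eta_k$ sits in the positive part; this is where the $\mu$-integration must be controlled. Here I expect the main obstacle: the $[G_1]$-integral of $\eta_k\mu^{-1}$ forces $\mu=\eta_k$ (up to $|\cdot|^s$), and then the remaining blocks form an $H^\sharp$-type configuration for $G_{2r}$ with twist $\eta_k$, i.e.\ exactly a term of the shape of \Cref{lemma 1} for the datum $\chi^k$ with character $\eta_k$. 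Since $\chi$ is $\Delta_1^*$-regular, $\chi^k$ is $(\eta_k,\Delta^*)$-regular, and the vanishing argument of \Cref{lemma 1}/\Cref{claim 1} applied to the remaining blocks kills it. Care is needed to ensure only one $\mu$ (a single point, possibly after continuous $|\cdot|^{it}$ twist absorbed by \Cref{Remark 1}) contributes and that the interchange of Fourier inversion and integration is justified by the absolute convergence from \cite[Proposition A.1.1]{Beuzart-Plessis_2021}.
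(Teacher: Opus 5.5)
Your proposal is correct and uses the same ingredients as the paper, but applies them in the opposite order. Those ingredients are: Fourier inversion along $\textnormal{det}(x)$; the orbit classification of \cite[Proposition 3.2]{Matringe+2015+119+170} with $\sum_k r^+_{k,k}=\sum_k r^-_{k,k}+1$; cuspidality; \cite[Lemma 7.1]{matringe2025intertwining} for an unbalanced block of size $\geq 2$; and, for a positive $G_1$-block with $\mu|_{\mathbb{A}^1}=\eta_k|_{\mathbb{A}^1}$, the $(\eta_k,\Delta^*)$-regularity of $\chi^k$ fed into the proof of \Cref{lemma 1}.

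\textbf{Ordering.} The paper Fourier-inverts first, directly on $[S_{r+1}]\times[G_r]$. For each unitary Hecke character $\eta$ it then only has to kill the $\eta^{-1}(\textnormal{det}(x))|y|^{1/2}$-twisted period over $[G_{r+1}\times G_r]$. Since $\iota'(G_{r+1}\times G_r)$ is itself the centralizer of a diagonal involution, Matringe's classification and \Cref{claim 1} apply verbatim. So the paper needs no orbit comparison and no conjugation to $\iota$; your remark about swapping the first two coordinates is not what relates $\iota'$ to $\iota$, and you can simply take $H^\sharp=\iota'(G_{r+1}\times G_r)$.

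\textbf{The extra step your route needs.} Because you unfold along the non-symmetric $H$ and Fourier-invert on each stabilizer, you must show that the $H(F)$- and $H^\sharp(F)$-double cosets agree. This is true, but ``$T_1$-stability'' is not the reason. The $H(F)$-cosets inside $P(F)wH^\sharp(F)$ are indexed by $F^\times$ modulo the image of $\textnormal{det}$ of the $x$-component on $(w^{-1}Pw\cap H^\sharp)(F)$. That map is surjective because $M_{s_w}\cap wH^\sharp w^{-1}$ acts on $V^+_{s_w}$ through the blocks $G_{r^+_{k,k}}$ and $G_{r_{i,j}}$ ($i<j$), and $\textnormal{dim}\,V^+_{s_w}=r+1>0$. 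With this in place, the stabilizer really is the $\textnormal{det}(x)=1$ part of $P\cap wH^\sharp w^{-1}$, as you claim.

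\textbf{Two minor points.}
\begin{itemize}
\item The block integrals have the shape of \Cref{lemma 2} with $\eta=\mu$, not $\mu^{-1}$. Lemma 2 itself cannot be quoted, since $\chi$ need not be $(\mu,\Delta^*)$-regular; your direct case analysis is the right move.
\item You need not control which $\mu$ contribute. Your case analysis shows the inner integral vanishes for every single $\mu$, so the superposition vanishes.
\end{itemize}

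\textbf{Comparison.} The paper's ordering is more economical. Yours has the small advantage that Fourier inversion is performed on products of cusp forms on a Levi, where the required decay is evident.
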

\begin{proof}
    Let $H'=G_{r+1}\times G_r$. We embed $H'$ into $G$ by $\iota'$. By disintegration along the determinant map and Fourier inversion on the idele class group $F^{\times}\backslash \mathbb{A}^{\times}$, with compatible Haar measures, we have
    $$\int_{[H]}f(\iota'(x,y))|y|^{\frac{1}{2}}\mathrm{d}x\mathrm{d}y=\int_{\widehat{F^{\times}\backslash \mathbb{A}^{\times}}}\int_{[H']}f(\iota'(x,y))\eta^{-1}(\textnormal{det}(x))|y|^{\frac{1}{2}}\mathrm{d}x\mathrm{d}y\mathrm{d}\eta.$$
    Thus, it suffices to prove that for any unitary Hecke character $\eta$, we have
    \begin{equation}
        \int_{[H']}f(\iota'(x,y))\eta^{-1}(\textnormal{det}(x))|y|^{\frac{1}{2}}\mathrm{d}x\mathrm{d}y=0.
    \end{equation}

    Write $\chi=(M,\pi)$, where $M=\prod_{i=1}^lG_{r_i}$ and $\pi=\boxtimes_{i=1}^l\pi_i$. Let $P$ denote the standard parabolic subgroup of $G$ with Levi component $M$. For a double coset in $P(F)\backslash G(F)/H'(F)$, we choose a representative $w\in G(F)$. Then by \cite[Proposition 3.2]{Matringe+2015+119+170}, $w$ corresponds to a set
    $$s_w=\{r_{i,j},1\leq i<j\leq l,(r_{k,k}^+,r_{k,k}^-),1\leq k\leq l\}$$
    such that if we set $r_{i,j}=r_{j,i}$, and $r_{k,k}=r_{k,k}^++r_{k,k}^-$, then $r_i=\sum_{j=1}^lr_{i,j}$ and $\sum_{k=1}^lr_{k,k}^+=\sum_{k=1}^lr_{k,k}^-+1$. Now following the same argument as in the proof of \Cref{lemma 1}, we can reduce the case when there exists $j$ such that $r_{i,j}=r_i$ for every $1\leq i\leq l$, and only consider the vanishing of the following integrals:
    \begin{align}\label{SH5}
          & \prod_{i=1}^l\int_{[G_{r_{i,i}^+}]\times [G_{r_{i,i}^-}]}\phi_i\begin{pmatrix}
            m_{i,i}^+ & \\ & m_{i,i}^-
        \end{pmatrix}\eta^{-1}(\textnormal{det}(m_{i,i}^+))|m_{i,i}^-|^{\frac{1}{2}}\mathrm{d}m_{i,i}^+\mathrm{d}m_{i,i}^-   \notag \\ & \prod_{1\leq i<j\leq l}\int_{[G_{r_{i,j}}]}\phi_i(m_{i,j})\phi_j(m_{j,i})\eta^{-1}(\textnormal{det}(m_{i,j}))|m_{i,j}|^{\frac{1}{2}}\mathrm{d}m_{i,j},
    \end{align}
    where each function $\phi_i$ is a cusp form in $\pi_i$ twisted by a Schwartz function on $A_{G_{r_i}}^{\infty}$.

    By the formula $\sum_{k=1}^lr_{k,k}^+=\sum_{k=1}^lr_{k,k}^-+1$, we can imply that there must exist $i$ such that $G_{r_{i,i}}=G_{r_i}$ and $r_{i,i}^+>r_{i,i}^-$. If $r_i\geq 2$, then by \cite[Lemma 7.1]{matringe2025intertwining}, the integral
    $$\int_{[G_{r_{i,i}^+}]\times [G_{r_{i,i}^-}]}\phi_i\begin{pmatrix}
            m_{i,i}^+ & \\ & m_{i,i}^-
        \end{pmatrix}\eta^{-1}(\textnormal{det}(m_{i,i}^+))|m_{i,i}^-|^{\frac{1}{2}}\mathrm{d}m_{i,i}^+\mathrm{d}m_{i,i}^-$$
    vanishes, and then we prove the vanishing of \Cref{SH5}. If $r_i=1$, then we must have $r_{i,i}^+=1$ and $r_{i,i}^-=0$. If $\eta^{-1}\eta_i|_{\mathbb{A}^1}\neq 1$, then we have
    $$\int_{F^{\times}\backslash \mathbb{A}^1}(\eta^{-1}\eta_i)(x)\mathrm{d}x=0.$$
    Therefore, we prove the vanishing of \Cref{SH5}. The last case is $\eta|_{\mathbb{A}^1}=\eta_i|_{\mathbb{A}^1}$. Then we can write $\eta=\eta_i|\cdot|^c$ for some $c\in\mathbb{C}$. According to \Cref{Remark 1}, we know that $(\eta,\Delta^*)$-regularity is equivalent to $(\eta_i,\Delta^*)$-regularity. Then by the definition of $\Delta_1^*$-regularity, we know that $\chi^i$ is $(\eta,\Delta^*)$-regular. Now this follows from the proof of \Cref{lemma 1}. The proof is complete.
\end{proof}

\begin{lemma}\label[lemma]{lemma 4}
    Fix a positive integer $r$. Let $G=G_{2r+2}$, $H=S_{r+2}\times G_r$ and we embed $H$ into $G$ via $\iota'$. Let $\chi\in \mathfrak{X}_{\Delta_1^*}(G)$ be a $\Delta_1^*$-regular cuspidal datum. Then for any $f\in \mathcal{S}_{\chi}([G])$ and $\Phi\in \mathcal{S}(\mathbb{A}^n)$, we have
    \begin{equation}
    \int_{[H]}f(\iota'(x,y))|y|^{\frac{1}{2}}\mathrm{d}x\mathrm{d}y=0.
    \end{equation}
\end{lemma}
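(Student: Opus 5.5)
The plan is to follow the proof of \Cref{lemma 3} almost verbatim, replacing the odd-size double coset analysis by the even-size one. Put $H'=G_{r+2}\times G_r$, embedded in $G=G_{2r+2}$ via $\iota'$. Disintegrating along $\det$ on the first factor and applying Fourier inversion on $F^{\times}\backslash\mathbb{A}^{\times}$ (the same step used in \Cref{lemma 3}), it suffices to show that
$$\int_{[H']}f(\iota'(x,y))\eta^{-1}(\textnormal{det}(x))|y|^{\frac12}\mathrm{d}x\mathrm{d}y=0$$
for every unitary Hecke character $\eta$ and every $f\in\mathcal{S}_{\chi}([G])$. The embedding $\iota'$ of $H'$ is conjugate by a permutation matrix in $G(F)$ to the standard block-diagonal embedding of $G_{r+2}\times G_r$. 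So, as in \Cref{lemma 1}, I would reduce via \cite[Lemma 2.5.2]{lu2025periodsdetectingeisensteinseries} to pseudo-Eisenstein series. I would then unfold over $P(F)\backslash G(F)/H'(F)$ and use \cite[Proposition 3.2]{Matringe+2015+119+170} to attach to each double coset $w$ data $s_w$. Now the constraint becomes $\sum_k r_{k,k}^+=\sum_k r_{k,k}^-+2$.

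Next I would use the cuspidality argument of \Cref{claim 1} to discard every $w$ for which $N'_{s_w}\neq 1$. After that step each $r_i$ equals some $r_{i,j}$, and it remains to show that some factor of the product analogous to \Cref{SH5} vanishes. The off-diagonal factors ($i<j$) cause no trouble, because \Cref{SH5} has the same form as before. The imbalance of $+2$ forces either one index $i$ with $r_{i,i}^+-r_{i,i}^-\geq 1$ and $r_{i,i}=r_i$, or two such indices. Take any $i$ with $r_{i,i}^+\neq r_{i,i}^-$.

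If $r_i\geq2$, that factor vanishes by \cite[Lemma 7.1]{matringe2025intertwining}. If $r_i=1$, then $(r_{i,i}^+,r_{i,i}^-)=(1,0)$ and the factor is $\int\phi_i\eta^{-1}$. This vanishes unless $\eta|_{\mathbb{A}^1}=\eta_i|_{\mathbb{A}^1}$. In the remaining case, write $\eta=\eta_i|\cdot|^c$. By \Cref{Remark 1} and $\Delta_1^*$-regularity, $\chi^i$ is $(\eta,\Delta^*)$-regular.

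The main obstacle is this last case, and I would handle it by deleting the $i$-th block. After removing block $i$, what remains is exactly an integral of the shape treated in \Cref{lemma 2}, namely the $G_{2r+1}$, $G_{r+1}\times G_r$ situation with imbalance $+1$ and twist $\eta^{-1}$ on the first factor. Its vanishing for the $(\eta,\Delta^*)$-regular datum $\chi^i$ then follows from the case analysis in the proof of \Cref{lemma 1} and \Cref{lemma 2}. In more detail, every factor there is either an off-diagonal pair, killed by condition (1), or a diagonal block of size at least $2$, killed by Matringe's lemma or by condition (2) together with \cite{xue2025twisted}. The leftover imbalanced $G_1$ block is killed by condition (3). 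One must check that the imbalance after removal is still nonzero, which it is ($+1$), so some imbalanced block always remains. The extra $|\cdot|^{1/2}$ and the twists coming from $\delta_P$ are absorbed into the complex parameters, as in the earlier proofs.
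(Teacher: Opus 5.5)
Your proposal is correct and is essentially the paper's proof. The paper simply says to repeat the argument of \Cref{lemma 3} with the constraint $\sum_k r_{k,k}^+=\sum_k r_{k,k}^-+2$ and with \Cref{lemma 2} replacing \Cref{lemma 1} in the final case, which is exactly what you do by deleting the unbalanced $G_1$-block and landing in the imbalance-$+1$ situation for the $(\eta,\Delta^*)$-regular datum $\chi^i$.

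One point to tighten: the block you single out must satisfy $r_{i,i}^+>r_{i,i}^-$ (such a block exists because the total imbalance is positive), not merely $r_{i,i}^+\neq r_{i,i}^-$. A size-one block of type $(0,1)$ carries no $\eta^{-1}$ twist and is not handled by your argument.
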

\begin{proof}
    The proof is identical to that of \Cref{lemma 3}, with the relation of $r_{i,i}^+$ and $r_{i,i}^-$ changing to
    $$\sum_{i=1}^lr_{i,i}^+=\sum_{i=1}^lr_{i,i}^-+2$$
    and \Cref{lemma 2} replacing \Cref{lemma 1}. We leave the details to the reader.
\end{proof}

Now we turn back to the proof of the unfolding identity:
\begin{figure}[ht]
\centering
\begin{tikzpicture}[
    x=0.65cm,
    y=0.65cm,
    >=Stealth,
    arr/.style={->,thin},
    every node/.style={font=\footnotesize}
]

%==================================================
% Left picture
%==================================================
\begin{scope}

% outer 9x9 square
\draw (0,0) rectangle (9,9);

% column 6: x=5..6, five 1x1 boxes
\draw[thick] (5,4) rectangle (6,9);
\foreach \y in {5,6,7,8}
    \draw (5,\y) -- (6,\y);

% shaded boxes 1,2,4 from the top
\fill[pattern=north east lines] (5,8) rectangle (6,9);
\fill[pattern=north east lines] (5,7) rectangle (6,8);
\fill[pattern=north east lines] (5,5) rectangle (6,6);

% staircase
\draw (6,4) -- (6,3) -- (7,3) -- (7,2) -- (8,2) -- (8,1) -- (9,1);

% labels
\node at (7.35,6.55) {$N_{2r+2,2n+1}$};
\node at (1.55,5.75) {$\iota'(U_{r+2}\times 1)$};
\node at (1.95,1.35) {$U_{2r+2}$};

% arrows
\draw[arr] (2.75,6.05) -- (5.02,8.55);
\draw[arr] (2.75,5.80) -- (5.02,7.45);
\draw[arr] (2.75,5.50) -- (5.02,5.45);
\draw[arr] (2.55,1.75) -- (5.00,4.05);

\end{scope}

%==================================================
% Right picture
%==================================================
\begin{scope}[xshift=6.0cm]

% outer 9x9 square
\draw (0,0) rectangle (9,9);

% column 5: x=4..5, four 1x1 boxes
\draw[thick] (4,5) rectangle (5,9);
\foreach \y in {6,7,8}
    \draw (4,\y) -- (5,\y);

% shaded box
\fill[pattern=north east lines] (4,6) rectangle (5,7);

% staircase
\draw (5,5) -- (5,4) -- (6,4) -- (6,3) -- (7,3) -- (7,2) -- (8,2) -- (8,1) -- (9,1);

% labels
\node at (6.95,6.65) {$N_{2r+1,2n+1}$};
\node at (1.35,5.95) {$\iota'(1\times U_r)$};
\node at (1.95,1.55) {$U_{2r+1}$};

% arrows
\draw[arr] (2.35,5.75) -- (4.02,6.50);
\draw[arr] (2.35,2.00) -- (4.35,5.02);

\end{scope}

\end{tikzpicture}

\caption{Illustrations of the proof of (2) and (3) for $n=4$, $r=2$.}
\label{fig:illustration}
\end{figure}

\begin{proof}[Proof of \textnormal{(2)}]
     Let $U_{2r+2}(\mathbb{A})=\{I_{2n+1}+\sum_{i=1}^{2r+1}b_iE_{i,2r+2}|b_i\in \mathbb{A}\}\subset G(\mathbb{A})$ and $U_{r+2}(\mathbb{A})=\{I_{n+1}+\sum_{i=1}^{r+1}b_iE_{i,r+2}|b_i\in \mathbb{A}\}\subset S_{n+1}(\mathbb{A})$ (see Figure 2 for a concrete example). Using the Fourier analysis on the compact abelian group $U_{2r+2}(\mathbb{A})/\iota'(U_{r+2}\times \{1\})(\mathbb{A})U_{2r+2}(F)$, for any $g\in G(\mathbb{A})$, we can write
    \begin{align*}
        \int_{[U_{r+2}]}f_{N_{2r+2,2n+1,\psi'}}(\iota'(u,1)g)\mathrm{du} & = (f_{N_{2r+2,2n+1,\psi'}})_{U_{2r+2}}(g)+\sum_{\gamma\in \mathcal{P}_r(F)\backslash G_{r}(F)}(f_{N_{2r+2,2n+1,\psi'}})_{U_{2r+2},\psi'}(\iota'(1,\gamma)g) \\
        & = (f_{N_{2r+2,2n+1,\psi'}})_{U_{2r+2}}(g)+\sum_{\gamma\in \mathcal{P}_r(F)\backslash G_r(F)}f_{N_{2r+1,2n+1,\psi'}}(\iota'(1,\gamma)g),
    \end{align*}
    where $(f_{N_{2r+2,2n+1,\psi'}})_{U_{2r+2}}$ stands for the constant term on the subgroup $U_{2r+2}$ and $(f_{N_{2r+2,2n+1,\psi'}})_{U_{2r+2},\psi'}$ stands for the Fourier coefficient of $\psi'|_{U_{2r+2}}$. Note that $[U_{r+2}]=U_{r+2}(F)\backslash U_{r+2}(\mathbb{A})$, $\mathcal{P}_{r+2}^1(F)=S_{r+1}(F)U_{r+2}(F)$ and $N_{r+1,n+1}^1(\mathbb{A})=N_{r+2,n+1}^1(\mathbb{A})U_{r+2}(\mathbb{A})$. Together with \Cref{absolute convergence 6}, we can write
    \begin{align*}
        Z_{r+1}(f,\Phi) & =\int_{\mathcal{P}^1_{r+2}(F)N^1_{r+2,n+1}(\mathbb{A})\backslash S_{n+1}(\mathbb{A})}\int_{G_{r}(F)N_{r,n}(\mathbb{A})\backslash G_n(\mathbb{A})}f_{N_{2r+2,2n+1,\psi'}}(\iota'(x,y))\Phi(e_ny)|y|^{1/2}\mathrm{d}x \mathrm{d}y \\
        & =\int_{S_{r+1}(F)N_{r+1,n+1}^1(\mathbb{A})\backslash S_{n+1}(\mathbb{A})}\int_{\mathcal{P}_{r}(F)N_{r,n}(\mathbb{A})\backslash G_n(\mathbb{A})}f_{N_{2r+1,2n+1,\psi'}}(\iota'(x,y))\Phi(e_ny)|y|^{\frac{1}{2}}\mathrm{d}x\mathrm{d}y+F_{r}(f) \\
        & = Z_{r}'(f,\Phi)+F_{r}(f),
    \end{align*}
    where
    $$F_{r}(f)=\int_{S_{r+1}(F)N_{r+1,n+1}^1(\mathbb{A})\backslash S_{n+1}(\mathbb{A})}\int_{G_{r}(F)N_{r,n}(\mathbb{A})\backslash G_n(\mathbb{A})}(f_{N_{2r+2,2n+1,\psi'}})_{U_{2r+2}}(\iota'(x,y))\Phi(e_ny)|y|^{\frac{1}{2}}\mathrm{d}x\mathrm{d}y$$
    and the absolute convergence of the above expression of integral can be proved similarly as the proof for \Cref{absolute convergence 6}. One can prove $F_r(f)$ vanishes for any $f\in\mathcal{S}_{\chi}([G])$ where $\chi$ is $\Delta_1^*$-regular by using \Cref{lemma 3} together with the proof of \Cref{F_r=0}. We omit the details here.
\end{proof}

\begin{proof}[Proof of \textnormal{(3)}]
    The proof is similar to the proof of part (2). Let $U_{2r+1}(\mathbb{A})=\{I_{2n+1}+\sum_{i=1}^{2r}b_iE_{i,2r+1}|b_i\in \mathbb{A}\}\subset G(\mathbb{A})$ and $U_{r}(\mathbb{A})=\{I_{n}+\sum_{i=1}^{r-1}b_iE_{i,r}|b_i\in \mathbb{A}\}\subset G_n(\mathbb{A})$ (see Figure 2 for a concrete example). Using the Fourier analysis on the compact abelian group $U_{2r+1}(\mathbb{A})/\iota'(1\times U_r)(\mathbb{A})U_{2r+1}(F)$, we have
    $$\int_{[U_r]}f_{N_{2r+1,2n+1,\psi'}}(\iota'(1,u)g)\mathrm{d}u=(f_{N_{2r+1,2n+1,\psi'}})_{U_{2r+1}}(g)+\sum_{\gamma\in \mathcal{P}_{r+1}^1(F)\backslash S_{r+1}(F)}f_{N_{2r,2n+1,\psi'}}(\iota'(\gamma,1)g).$$

    By \Cref{absolute convergence 6}, we can write
    $$Z'_{r}(f,\Phi)=Z_{r}(f,\Phi)+F'_{r}(f),$$
    where
    $$F'_{r}(f)=\int_{S_{r+1}(F)N_{r+1,n+1}^1(\mathbb{A})\backslash S_{n+1}(\mathbb{A})}\int_{G_{r-1}(F)N_{r-1,n}(\mathbb{A})\backslash G_n(\mathbb{A})}(f_{N_{2r+1,2n+1,\psi'}})_{U_{2r+1}}(\iota'(x,y))\Phi(e_ny)|y|^{\frac{1}{2}}\mathrm{d}x\mathrm{d}y$$
    and the absolute convergence of the above expression of integral can be proved similarly as the proof for \Cref{absolute convergence 6}. Then, by \Cref{lemma 4} and the proof of \Cref{F_r=0} , we can show that $F'_{r}(f)=0$. The proof is complete
\end{proof}

\begin{proof}[Proof of \textnormal{(1)}]
    By \Cref{absolute convergence 6}, we can write
    \begin{align*}
        \mathcal{P}(f,\Phi) & =\int_{[S_{n+1}]\times [G_n]}f(\iota'(x,y))\Theta(y,\Phi)|\mathrm{d}x\mathrm{d}y \\
        & =\int_{[S_{n+1}]}\int_{[G_n]}f(\iota'(x,y))\sum_{v\in F^n}\Phi(vy)|y|^{\frac{1}{2}}\mathrm{d}x\mathrm{d}y \\
        & = \int_{[S_{n+1}]}\int_{[G_n]}f(\iota'(x,y))\sum_{\gamma\in \mathcal{P}_n(F)\backslash G_n(F)}\Phi(e_n\gamma y)|y|^{\frac{1}{2}}\mathrm{d}x\mathrm{d}y+F_{n}(f) \\ 
        &=Z_{n}'(f,\Phi)+F_{n}(f),
    \end{align*}
    where
    $$F_{n}(f)=\Phi(0)\int_{[S_{n+1}]}\int_{[G_n]}f(\iota'(x,y))|y|^{\frac{1}{2}}\mathrm{d}x\mathrm{d}y.$$
    The vanishing of $F_{n}(f)$ directly follows from \Cref{lemma 3}. The proof is complete.
\end{proof}

\begin{proof}[Proof of \Cref{main result 1}]
    As before, we write $\chi=(M,\pi)$ with $M=\prod_{i=1}^lG_{n_i}$ and $\pi=\boxtimes_{i=1}^l\pi_i$, where $\sum_{i=1}^ln_i=2n+1$ and each $\pi_i$ is a cuspidal representation of $G_{n_i}(\mathbb{A})$ with central character trivial on $A_{G_{n_i}}^{\infty}$.

    Let $Q=P_{(1,2n)}$. By the Iwasawa decomposition $S_{n+1}(\mathbb{A})=P_{(1,n)}^1(\mathbb{A})K^1$, we can rewrite the integral $Z(f,\Phi,\lambda,s_2)$ as
    \begin{align}\label{SH6}
    Z(f,\Phi,\lambda,s_2)  & =\int_{K^1}\int_{N_n(\mathbb{A})\backslash G_n(\mathbb{A})}\int_{N_n(\mathbb{A})\backslash G_n(\mathbb{A})}W_{(R(k)f)_Q}(\nu(x,y))\Phi(e_ny)|x|^{s_{\lambda}+n+1}|y|^{s_2+\frac{1}{2}}\mathrm{d}k\mathrm{d}x\mathrm{d}y \\
    & =\int_{K^1}Z^a((R(k)f)_Q,\Phi,s_{\lambda}+n+1,s_2)\mathrm{d}k.
    \end{align}
    where $R(k)f=R(\iota'(k,1))f$. By \cite[Lemma 2.5.4]{lu2025periodsdetectingeisensteinseries}, we can see that $(R(k)f)_Q|_{M_{Q}}\in \mathcal{T}_{\chi^{M_Q}}$, where $\chi^{M_Q}$ denotes the inverse image of $\chi$ in $\mathfrak{X}(M_Q)$. Note that $\chi^{M_Q}$ is indexed by $J_1(\chi)$. In other words, $\chi^{M_Q}=\{\eta_i\boxtimes\chi^i\}_{i\in J_1(\chi)}$. Then by \cite[Theorem 2.9.4.1]{beuzart2022global}, the function $(R(k)f)_Q|_{M_Q}$ can be written as
    $$(R(k)f)_Q=\sum_{i\in J_1(\chi)}(R(k)f)_Q^i,$$
    where $(R(k)f)_Q^i\in \mathcal{T}_{\eta_i\boxtimes\chi^i}([G_1\times G_{2n}])$ for each $i\in J_1(\chi)$. Note that from the definition of $\Delta_1^*$-regularity, $\eta_i\boxtimes \chi^i$ is $\Delta_a^*$-regular. By \Cref{main result 2}, each $Z^a((R(k)f)_Q^i,\Phi,s_{\lambda}+n+1,s_2)$ extends to an entire function. Therefore, $Z^a((R(k)f)_Q,\Phi,s_{\lambda}+n+1,s_2)$ admits an analytic continuation to an entire function on $\mathbb{C}^2$, such that for any $(k,s_{\lambda},s_2)\in K^1\times \mathbb{C}^2$, the functional $f\mapsto Z^a((R(k)f)_Q,\Phi,s_{\lambda}+n+1,s_2)$ is continuous on $\mathcal{T}_{\chi}([G])$. Then by \cite[Lemma 2.7.2]{lu2025periodsdetectingeisensteinseries}, the integral
    $$Z(f,\Phi,\lambda,s_2)=\int_{K^1}Z^a((R(k)f)_Q,\Phi,s_{\lambda}+n+1,s_2)\mathrm{d}k$$
    admits an entire continuation to $\mathfrak{a}_{P_{(1,n)}^1,\mathbb{C}}^*\times \mathbb{C}$, such that for any $(\lambda,s_2)\in \mathfrak{a}_{P_{(1,n)}^1,\mathbb{C}}^*\times \mathbb{C}$, the functional $Z(\cdot,\Phi,\lambda,s_2)$ is continuous on $\mathcal{T}_{\chi}([G])$. Set
    $$\mathcal{P}^*(f,\Phi)=Z(f,\Phi,0,0).$$
    By the continuity of $Z(\cdot,\Phi,0,0)$, we can see that $\mathcal{P}^*(\cdot,\Phi)$ is an extension of $\mathcal{P}(\cdot,\Phi)$. The proof is complete.
\end{proof}

\subsection{Evaluation on $\Delta_1^*$-regular Eisenstein series}

Let $P=M_PN_P$ be a standard parabolic subgroup of $G$. We write $M_P=\prod_{i=1}^lG_{n_i}$, where $\sum_{i=1}^ln_i=2n+1$. Let $\pi=\pi_1\boxtimes\cdots \boxtimes \pi_l$ be a unitary cuspidal representation of $M_P(\mathbb{A})$, such that the cuspidal datum $\chi=(M_P,\pi_0)$ (here $\pi_0$ represents the cuspidal representation on $M_P(\mathbb{A})$ which is a twist of $\pi$ such that $\pi_0$ is trivial on $A_{M_P}^{\infty}$) is $\Delta_1^*$-regular. Let $\varphi\in \Pi=\textnormal{Ind}_{P(\mathbb{A})}^{G(\mathbb{A})}\pi$ and write $E(\varphi)(g)=E(g,\varphi,0)$ for the Eisenstein series of $\varphi$. Note that $E(\varphi)\in \mathcal{T}_{\chi}([G])$. Similarly to \Cref{delta1}, we can define $J_1(\pi)$, $\eta_i$, $M^i$ and $\pi^i$ in the same manner. By the definition of $\Delta_1^*$-regularity, we can see that $\pi^i$ is $(\eta_i,\Delta^*)$-regular. Define $\Pi_i=\textnormal{Ind}_{M^i(\mathbb{A})}^{G_{2n}(\mathbb{A})}\pi^i$.

\begin{thm}\label[thm]{main}
    Fix a Schwartz function $\Phi\in \mathcal{S}(\mathbb{A}^n)$. Let $S$ be a sufficiently large finite set of places as specified below. Also we choose the normalization that the function
    $$W_{\varphi}^{M_P}(g):=\int_{[M_P\cap N_{2n+1}]}\varphi(ug)\psi^{-1}(u)\mathrm{d}u$$
    satisfies $W_{\varphi}^{M_P,S}(1)=1$.
    We have the following formula:
    \begin{align}\label{final equation}
    \mathcal{P}^*(E(\varphi),\Phi) & =(\Delta_{G_n\times G_n}^{S,*})^{-1} L(1,\pi,\hat{\mathfrak{n}}_P^-)^{-1}\sum_{i\in J_1(\pi)}L^S(1,\Pi_i^{\vee}\otimes \eta_i)L^S(1,\Pi_i\otimes \eta_i^{-1})L^S(\frac{1}{2},\Pi_i,\wedge^2\otimes \eta_i^{-1}) \notag \\
        & \;\;\;\;\;\;\;\;\;\;\;\;\;\;\;\;\;\;\;\;\;\;\;\;\;\;\;\;\;\;\;\;\;\;\;\;\;\;\;\;\;\;\;\;\;\;\;\;\;\;\;\;\;\;\;\;\;L_S(1,w_i\pi,\hat{\mathfrak{n}}_{Q_{w_i}}^-) \tilde{Z}_{i,S}(\varphi_S,\Phi_S,\frac{1}{2},-\frac{1}{2}).
    \end{align}
    All definitions will be made precise in the proof.
\end{thm}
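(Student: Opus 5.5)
The plan is to start from \Cref{main result 1}(2), which gives $\mathcal{P}^*(E(\varphi),\Phi)=Z(E(\varphi),\Phi,0,0)$. Then use the decomposition \eqref{SH6}:
$$Z(E(\varphi),\Phi,\lambda,s_2)=\int_{K^1}Z^a\big((R(k)E(\varphi))_Q,\Phi,s_\lambda+n+1,s_2\big)\,\mathrm{d}k,$$
with $Q=P_{(1,2n)}$. The first step is to compute the constant term $(R(k)E(\varphi))_Q$ restricted to $M_Q=G_1\times G_{2n}$ by the Langlands constant term formula. It is a sum over $w\in W(M_P)$ with $wM_Pw^{-1}\subset M_Q$ standard, of Eisenstein series on $M_Q$ induced from $M(w,0)\varphi$. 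Cuspidality of $\pi$ kills every term unless $w$ moves a $G_1$-block $\pi_i$ ($i\in J_1(\pi)$) into the upper-left corner. So only the minimal-length elements $w_i$, $i\in J_1(\pi)$, survive, and
$$(R(k)E(\varphi))_Q|_{M_Q}=\sum_{i\in J_1(\pi)}\eta_i\otimes E^{G_{2n}}\big(M(w_i,0)R(k)\varphi\big)$$
(up to the modulus twist $\delta_Q^{1/2}$). Regularity of $\chi$ ensures that $M(w_i,\lambda)$ is holomorphic at $0$. This is the same as the decomposition into the $\mathcal{T}_{\eta_i\boxtimes\chi^i}$-components used in the proof of \Cref{main result 1}.

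The second step handles each summand. Since the $G_1$-factor is the character $\eta_i$ twisted by $\delta_Q^{1/2}$, evaluating at $\det(x)^{-1}$ turns $Z^a$ into the twisted Bump--Friedberg integral $Z^{\textnormal{BF}}_{\eta_i}$ on $G_{2n}$ applied to $E^{G_{2n}}(M(w_i,0)R(k)\varphi)\in\mathcal{T}_{\chi^i}$. The $\delta_Q^{1/2}$-twist and the shift $s_\lambda+n+1$ should combine at $(\lambda,s_2)=(0,0)$ to the point $(s_1,s_2)=(\tfrac12,-\tfrac12)$. I need to check this shift carefully, including the $|\det|^{-1/2}$ normalization of $\Theta$.

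By \Cref{main result} this value is given by the entire continuation. Next apply the Euler decomposition \eqref{L function BF} with $\Pi_i$ and $\eta=\eta_i$. This gives $(\Delta^{S,*}_{G_n\times G_n})^{-1}L^S(1,\Pi_i\otimes\eta_i^{-1})L^S(\tfrac12,\Pi_i,\wedge^2\otimes\eta_i^{-1})$ times a local integral at $S$. I then integrate over $K^1$ and absorb that integral into the local factor, defining $\tilde Z_{i,S}(\varphi_S,\Phi_S,\tfrac12,-\tfrac12)$.

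The third step tracks the normalizations. Unramified Whittaker functions of Eisenstein series satisfy $W^S_{E(\varphi)}(1)=L^S(1,\pi,\hat{\mathfrak n}_P^-)^{-1}$ by Shahidi's formula, and similarly on $G_{2n}$ for $\Pi_i$. The intertwining operator $M(w_i,0)$ at unramified places multiplies by a ratio of $L$-functions. Comparing the global factor $L(1,\pi,\hat{\mathfrak n}_P^-)$ for $G_{2n+1}$ with the one for $\Pi_i$ inside $G_{2n}$ together with the intertwining ratio should leave exactly $L^S(1,\Pi_i^\vee\otimes\eta_i)$. The leftover $S$-part is recorded as $L_S(1,w_i\pi,\hat{\mathfrak n}^-_{Q_{w_i}})$, which produces \eqref{final equation}. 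I expect this bookkeeping to be the main obstacle: getting the exact $L$-factors from the Gindikin--Karpelevich factor of $w_i$ (the roots between block $i$ and the blocks preceding it) to cancel properly against the Shahidi normalizations. I would first verify it in the case $M_P$ is a torus, then argue block by block.

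Finally, the identity holds on the region of absolute convergence of the local integrals given by \cite{leslie2025unitary}. It then extends to $(\tfrac12,-\tfrac12)$ by meromorphic continuation, using the entireness of both sides.
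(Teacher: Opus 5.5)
Your proposal follows the paper's proof essentially step by step: the Langlands constant term formula along $Q=P_{(1,2n)}$ with only the $w_i$, $i\in J_1(\pi)$, surviving; the reduction of $Z^a$ at $(n+1,0)$ to $Z^{\mathrm{BF}}_{\eta_i}$ at $(\tfrac12,-\tfrac12)$; the Euler decomposition of the twisted Bump--Friedberg integral; and the normalization $M(w_i)=n(w_i)N(w_i)$ combined with Shahidi's formula, which produces $L^S(1,\Pi_i^\vee\otimes\eta_i)$ and the leftover $L_S(1,w_i\pi,\hat{\mathfrak n}^-_{Q_{w_i}})$. The shift you flagged does check out, since $\delta_Q^{1/2}(\nu(x,y))=|x|^{-n-1/2}|y|^{-1/2}$. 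The remaining slips are cosmetic: holomorphy of $M(w_i,\lambda)$ at $\lambda=0$ comes from unitarity of $\pi$ rather than from regularity, and the right-hand side is only meromorphic, so its value at $(\tfrac12,-\tfrac12)$ is understood via continuation, as in the paper.
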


The right-hand side of \Cref{final equation} is understood by meromorphic continuation. In particular, when $L(1,\pi,\hat{\mathfrak{n}}_P^-)$ has a pole, both sides vanish.

\begin{proof}
    Let $Q=P_{(1,2n)}\subset G$. By \cite[Lemma 6.10]{bernstein2024meromorphic}, we have the Langlands constant term formula
    $$(R(k)E(\varphi))_Q=\sum_{w\in {_QW_P}}E^{Q}(M(w)(R(k)\varphi)_{P_w})$$
    for any $k\in \iota'(K^1,1)$, where $(R(k)\varphi)_{P_w}$ denotes the constant term of $R(k)\varphi$ along the parabolic $P_w\cap M_P$ of $M_P$ and the superscript indicates that we replace the sum over $P(F)\backslash G(F)$ by the sum over $Q_w(F)\backslash Q(F)$. By the cuspidality of $\pi$, the only elements in ${_QW_P}$ that contribute to the constant term formula are those $w$ such that $P_w=P$, which means $(R(k)\varphi)_{P_w}=R(k)\varphi$. For such $w\in {_QW_P}$, it must move a $G_1$ block of $M_P$ to the upper left corner and keep the rest blocks in the same order. We can see that such $w$ are in one-to-one correspondence with $J_1(\pi)$, and for each $i\in J_1(\pi)$, we denote by $w_i$ the corresponding element in ${_QW_P}$. Therefore, we can rewrite the Langlands constant term formula as
    $$(R(k)E(\varphi))_Q=\sum_{i\in J_1(\pi)}E^{Q}(M(w_i)(R(k)\varphi)).$$

    Note that our description of $w_i$ shows that  $w_iM_Pw_i^{-1}=G_1\times M^i$ and $w_i\pi=\eta_i\boxtimes \pi^i$. Therefore, $M(w_i)(R(k)\varphi)\in \textnormal{Ind}_{Q_{w_i}(\mathbb{A})}^{G(\mathbb{A})}\eta_i\boxtimes \pi^i$. Let $P^i$ be the standard parabolic subgroup of $G_{2n}$ with Levi component $M^i$. Note that $G_1\times P^i$ can be regarded as a standard parabolic subgroup of $G_1\times G_{2n}$. We have the transitivity law of modulus characters:
    $$\delta_Q(r)\cdot \delta_{G_1\times P^i}(r)=\delta_{Q_{w_i}}(r)$$
    for any $r\in G_1\times M^i$.

    Now, consider the function $F_{i,k}=\delta_Q^{-1/2}M(w_i)(R(k)\varphi)$. By the definition of induced representations and the transitivity law of modulus characters, we can see that the function on $G_1(\mathbb{A})\times M^i(\mathbb{A})$ defined by $m\mapsto \delta_{P^i}^{-1/2}(m)F_{i,k}(mg)$ belongs to $\eta_i\boxtimes \pi^i$ for any $g\in G_1(\mathbb{A})\times G_{2n}(\mathbb{A})$. Thus $F_{i,k}|_{M_Q(\mathbb{A})}\in \textnormal{Ind}_{G_1(\mathbb{A})\times P_i(\mathbb{A})}^{G_1(\mathbb{A})\times G_{2n}(\mathbb{A})}\eta_i\boxtimes \pi^i$. Thus, for any $(a,g)\in G_1{(\mathbb{A})}\times G_{2n}(\mathbb{A})$, we can write $F_{i,k}(a,g)=\eta_i(a)\varphi_{i,k}(g)$ for a function $\varphi_{i,k}\in \Pi_i=\textnormal{Ind}_{P^i(\mathbb{A})}^{G_{2n}(\mathbb{A})}\pi^i$. 

    First, we need to replace the intertwining operator $M(w_i)$ by the normalized intertwining operator $N(w_i)$, because $N(w_i)$ locally sends spherical functions to spherical functions. By \cite[\S 2.4.3]{lu2025periodsdetectingeisensteinseries}, the relation between $M(w_i)$ and $N(w_i)$ is that
    $$M(w_i)=\frac{L(1,w_i\pi,\hat{\mathfrak{n}}_{Q_{w_i}}^-)}{L(1,\pi,\hat{\mathfrak{n}}_P^-)}N(w_i)=n(w_i)N(w_i).$$
    Set $\varphi_{i,k}^N=n(w_i)^{-1}\varphi_{i,k}\in \textnormal{Ind}_{P^i(\mathbb{A})}^{G_{2n}(\mathbb{A})}\pi^i$ and $F_{i,k}^N=n(w_i)^{-1}F_{i,k}=\delta_Q^{-1/2}N(w_i)(R(k)\varphi)$. Direct computation shows that $F_{i,k}^N(a,g)=\eta_i(a)\varphi_{i,k}^N(g)$.

    Let $S$ be a sufficiently large finite set of places of $F$, which we assume to contain Archimedean places as well as the places where $\Pi$, $\psi$, $\varphi$ is ramified. Furthermore, we assume that we can write $K^1=K_S^1K^{1,S}$ such that $\varphi$ is fixed by $K^{1,S}$ and write $\Phi=\Phi_S\Phi^S$, where $\Phi^S$ is the characteristic function of $(\mathcal{O}_F^S)^n$ and $\Phi_S\in \mathcal{S}(F_S^n)$. Now we can rewrite $\mathcal{P}^*(E(\varphi),\Phi)$ as follows:
    \begin{align}\label{L function}
        \mathcal{P}^*(E(\varphi),\Phi) & =(\Delta_{G_n\times G_n}^{S,*})^{-1}\int_{K^1_S}Z^a((R(k)E(\varphi))_Q,\Phi,n+1,0)\mathrm{d}k \notag \\
        & =(\Delta_{G_n\times G_n}^{S,*})^{-1}\sum_{i\in J_1(\pi)}\int_{K^1_S}Z^a(E^{Q}(M(w_i)(R(k)\varphi)),\Phi,n+1,0)\mathrm{d}k \notag \\
         & =(\Delta_{G_n\times G_n}^{S,*})^{-1}\sum_{i\in J_1(\pi)}n(w_i)\int_{K^1_S}Z^a(E^{Q}(N(w_i)(R(k)\varphi)),\Phi,n+1,0)\mathrm{d}k \notag \\
        & =(\Delta_{G_n\times G_n}^{S,*})^{-1}\sum_{i\in J_1(\pi)}n(w_i)\int_{K^1_S}Z_{\eta_i}^{\textnormal{BF}}(E^{G_{2n}}(\varphi_{i,k}^N),\Phi,\frac{1}{2},-\frac{1}{2})\mathrm{d}k.
    \end{align}
    However, this is not normalized. In other words, we do not have the condition $W_{E^{G_{2n}}(\varphi_{i,k}^N)}^S(1)=1$. By the condition that $W_{\varphi}^{M_P,S}(1)=1$ and \cite[\S 4]{shahidi2011certain}, we get that 
    $$W_{E^{G_{2n}}(\varphi_{i,k}^N)}^S(1)=L^S(1,\pi^i,\hat{\mathfrak{n}}_{P^i}^-)^{-1}.$$
    By \cite[Equation 2.6.2]{lu2025periodsdetectingeisensteinseries}, we have
    $$W_{E(\varphi_{i,k}^N),S}=L^S(1,\pi^i,\hat{\mathfrak{n}}_{P^i}^-)^{-1}\Omega_{i,S}(W_{\varphi_{i,k}^N,S}^{M^i}),$$
    where $\Omega_{i,S}:\textnormal{Ind}_{P^i(F_S)}^{G_{2n}(F_S)}\mathcal{W}(\pi^i_S,\psi_S)\rightarrow \mathcal{W}(\textnormal{Ind}_{P^i(F_S)}^{G_{2n}(F_S)}\pi^i_S,\psi_S)$ denotes the Jacquet functional (for more details, ene can check \cite[\S 2.6]{lu2025periodsdetectingeisensteinseries}). Therefore, we can further write \Cref{L function} as
    \begin{align*}
        \mathcal{P}^*(E(\varphi),\Phi) & =(\Delta_{G_n\times G_n}^{S,*})^{-1}\sum_{i\in J_1(\pi)}n(w_i)L^S(1,\Pi_i\otimes \eta_i^{-1})L^S(\frac{1}{2},\Pi_i,\wedge^2\otimes \eta_i^{-1})L^S(1,\pi^i,\hat{\mathfrak{n}}_{P^i}^-)^{-1} \\
        & \;\;\;\;\;\;\;\;\;\;\;\;\;\;\;\;\;\;\;\;\;\;\;\;\;\;\;\;\;\;\;\;\;\;\int_{K^1_S}\tilde{Z}_{\eta_i,S}^{\textnormal{BF}}(\Omega_{i,S}(W_{\varphi_{i,k}^N,S}^{M^i}),\Phi_S,\frac{1}{2},-\frac{1}{2})\mathrm{d}k \\
        & = (\Delta_{G_n\times G_n}^{S,*})^{-1} L(1,\pi,\hat{\mathfrak{n}}_P^-)^{-1}\sum_{i\in J_1(\pi)}L^S(1,\Pi_i^{\vee}\otimes \eta_i)L^S(1,\Pi_i\otimes \eta_i^{-1})L^S(\frac{1}{2},\Pi_i,\wedge^2\otimes \eta_i^{-1})\\
        & \;\;\;\;\;\;\;\;\;\;\;\;\;\;\;\;\;\;\;\;\;\;\;\;\;\;\;\;\;\;\;\;\;\;\;\;\;\;\;\;\;\;\;\;\;\;\;\;\;\;\;\;\;\;\;\;\;L_S(1,w_i\pi,\hat{\mathfrak{n}}_{Q_{w_i}}^-) \tilde{Z}_{i,S}(\varphi_S,\Phi_S,\frac{1}{2},-\frac{1}{2})
    \end{align*}
    where
    $$\tilde{Z}_{i,S}(\varphi_S,\Phi_S,\frac{1}{2},-\frac{1}{2})=\int_{K^1_S}\tilde{Z}_{\eta_i,S}^{\textnormal{BF}}(\Omega_{i,S}(W_{\varphi_{i,k}^N,S}^{M^i}),\Phi_S,\frac{1}{2},-\frac{1}{2})\mathrm{d}k.$$
    This is exactly \Cref{final equation}. The proof is complete.

\end{proof}

\subsection{Compatibility with the global numerical conjecture}\label{5.5}

By \Cref{definition1} together with \Cref{definition}(3), we can see that $\eta_i\neq \eta_j$ for different $i\neq j$ in $J_1(\pi)$. Our spherical variety corresponding to the case discussed in this section is $X=G\times ^{H}{\textnormal{std}_{G_n}}$, where ${\textnormal{std}_{G_n}}$ stands for the standard representation of $G_n$, and the equivalent condition is defined by $(g,v)\sim (gh,h^{-1}v)$ for any $g\in G$, $h\in H$ and $v\in V_{\textnormal{std}_{G_n}}$. Motivated by \cite[Table 14, Line 19]{tang2026anomaly} and the central modifications discussed in Remark 5.1 in loc. cit., we propose the following conjectural dual spherical variety:
$$\check{X}=G_{2n+1}\times^{G_1\times G_{2n}}(\textnormal{std}_{G_1}^{\vee}\boxtimes \wedge^2).$$

\subsubsection{The global Langlands correspondence}

We first review the hypothetical global Langlands correspondence, following \cite[\S A.1]{lu2025periodsdetectingeisensteinseries}:
\begin{itemize}
    \item There exists a locally compact topological group $\mathcal{L}_F$, such that there exists a bijection of isomorphism classes
    $$\{n\textnormal{-dimensional continuous irreducible rep. of } \mathcal{L}_F\}\leftrightarrow \{\textnormal{cuspidal rep. of } G_n(\mathbb{A})\}.$$
    For a cuspidal representation $\pi$ of $G_n(\mathbb{A})$, denote the corresponding representation $\mathcal{L}_F\rightarrow G_n(\mathbb{C})$ by $\phi_{\pi}$, and call it the $L$-parameter of $\pi$.

    \item Let $P=MN$ be a standard parabolic subgroup $G_n$. Let $\pi$ be a cuspidal representation of $M(\mathbb{A})$. By the above correspondence, we have an $L$-parameter $\mathcal{L}_F\rightarrow \check{M}$ of $\pi$. Let $\Pi=\textnormal{Ind}_{P(\mathbb{A})}^{G_n(\mathbb{A})}\pi$, realized as an Eisenstein series on $G_n(\mathbb{A})$. Then the $L$-parameter of $\Pi$ is given by $\mathcal{L}_F\rightarrow \check{M}\rightarrow G_n(\mathbb{C})$.
\end{itemize}

We will assume that the hypothetical global Langlands correspondence holds.

\subsubsection{Computation of the fixed points and the tangent spaces}

Let $\phi_i:\mathcal{L}_F\rightarrow G_{n_i}(\mathbb{C})$ denote the $L$-parameter of $\pi_i$ for $1\leq i\leq l$. Therefore, the $L$-parameter $\phi_{\Pi}$ of $\Pi$ can be written as
$$\phi_{\Pi}=\bigoplus_{i=1}^l\phi_i.$$
The action of $\mathcal{L}_F$ on $\check{X}$ via $\phi_{\Pi}$ composing with the natural action of $G_{2n+1}(\mathbb{C})$ on $\check{X}$. To be more precise, we may identify $\check{X}$ as the triple $(V,W,\alpha)$, where $V$ and $W$ are two subspaces of $\mathbb{C}^{2n+1}$ such that $\textnormal{dim}\; V=1$, $\textnormal{dim}\; W=2n$, and $\mathbb{C}^{2n+1}=V\oplus W$, and $\alpha$ is a vector in $V^{\vee}\otimes \wedge^2W$. The action of $G_{2n+1}(\mathbb{C})$ on $\check{X}$ is given by $g\cdot (V,W,\alpha)=(gV,gW,g\alpha)$. Then by \cite[Lemma A.2.2]{lu2025periodsdetectingeisensteinseries}, the fixed point of the action $\mathcal{L}_F$ on $\check{X}$ are of the form $(V_i,W_i,\alpha)$, where $i\in J_1(\pi)$, $V_i$ is the space corresponds to $\phi_i$, $W_i$ is the space corresponds to $\bigoplus_{j\neq i}\phi_j$, and $\alpha\in (V_i^{\vee}\otimes \wedge^2W_i)^{\mathcal{L}_F}$.

Since $V_i$ is a one-dimensional vector space, the action of $\mathcal{L}_F$ on $V_i^{\vee}\otimes \wedge^2W_i$ can be understood as 
$$\phi_i^{-1}\otimes \wedge^2 \bigoplus_{j\neq i}\phi_j=\bigoplus_{j\neq i}\phi_i^{-1}\otimes \wedge^2 \phi_j\oplus \bigoplus_{j,k\neq i\\j<k,}\phi_i^{-1}\otimes\phi_j\otimes \phi_k.$$
By Schur's lemma, $(\phi_i^{-1}\otimes\phi_j\otimes \phi_k)^{\mathcal{L}_F}\neq 0$ would imply $\phi_i^{-1}\otimes \phi_j\simeq \phi_k^{\vee}$, which is excluded by \Cref{definition}(1).
The fact that $(\phi_i^{-1}\otimes \wedge^2 \phi_j)^{\mathcal{L}_F}=0$ follows automatically if $n_j$ is odd, and follows from \Cref{definition}(2) if $n_j$ is even. Thus, the set $\check{X}^{\mathcal{L}_F}$ is in one-to-one correspondence with the set $J_1(\pi)$, and for each $i\in J_1(\pi)$, we write the corresponding fixed point as $x_i=(V_i,W_i,0)$.
Then by \cite[Lemma A.2.3]{lu2025periodsdetectingeisensteinseries}, we have
$$T_{x_i}\check{X}=V_i^{\vee}\otimes W_i\oplus V_i\otimes W_i^{\vee}\oplus V_i^{\vee}\otimes \wedge^2W_i,$$
which is compatible with \Cref{final equation}.

\printbibliography

@misc{lu2025periodsdetectingeisensteinseries,
      title={Periods detecting Eisenstein series and sums of $L$-values I}, 
      author={Weixiao Lu and Guodong Xi},
      year={2025},
      eprint={2510.12446},
      archivePrefix={arXiv},
      primaryClass={math.NT},
      url={https://arxiv.org/abs/2510.12446}, 
}

@article{Beuzart-Plessis_2021, title={COMPARISON OF LOCAL RELATIVE CHARACTERS AND THE ICHINO–IKEDA CONJECTURE FOR UNITARY GROUPS}, volume={20}, DOI={10.1017/S1474748019000707}, number={6}, journal={Journal of the Institute of Mathematics of Jussieu}, author={Beuzart-Plessis, Raphaël}, year={2021}, pages={1803–1854}}

@article{Matringe+2015+119+170,
url = {https://doi.org/10.1515/crelle-2013-0083},
title = {On the local Bump–Friedberg L-function},
title = {},
author = {Nadir Matringe},
pages = {119--170},
volume = {2015},
number = {709},
journal = {Journal für die reine und angewandte Mathematik (Crelles Journal)},
doi = {doi:10.1515/crelle-2013-0083},
year = {2015},
lastchecked = {2026-06-21}
}

@article{matringe2025intertwining,
  title={Intertwining periods, L-functions and local-global principles for distinction of automorphic representations},
  author={Matringe, Nadir and Offen, Omer and Yang, Chang},
  journal={arXiv preprint arXiv:2509.00441},
  year={2025}
}

@article{xue2025twisted,
  title={Twisted linear periods and a new relative trace formula},
  author={Xue, Hang and Zhang, Wei},
  journal={Peking Mathematical Journal},
  volume={8},
  number={3},
  pages={533--600},
  year={2025},
  publisher={Springer}
}

@article{beuzart2022global,
  title={The global Gan-Gross-Prasad conjecture for unitary groups: the endoscopic case},
  author={Beuzart-Plessis, Rapha{\"e}l and Chaudouard, Pierre-Henri and Zydor, Micha{\l}},
  journal={Publications math{\'e}matiques de l'IH{\'E}S},
  volume={135},
  number={1},
  pages={183--336},
  year={2022},
  publisher={Springer}
}

@article{leslie2025unitary,
  title={Unitary Friedberg-Jacquet periods and their twists: Relative trace formulas},
  author={Leslie, Spencer and Xiao, Jingwei and Zhang, Wei},
  journal={arXiv preprint arXiv:2503.09664},
  year={2025}
}

@article{bernstein2024meromorphic,
  title={On the meromorphic continuation of Eisenstein series},
  author={Bernstein, Joseph and Lapid, Erez},
  journal={Journal of the American Mathematical Society},
  volume={37},
  number={1},
  pages={187--234},
  year={2024}
}

@book{langlands2006functional,
  title={On the functional equations satisfied by Eisenstein series},
  author={Langlands, Robert P},
  year={2006},
  publisher={Springer}
}

@incollection{lapid2008remark,
  title={A remark on Eisenstein series},
  author={Lapid, Erez M},
  booktitle={Eisenstein series and applications},
  pages={239--249},
  year={2008},
  publisher={Springer}
}

@article{tang2026anomaly,
  title={Anomaly-free Hyperspherical Hamiltonian spaces for simple reductive groups},
  author={Tang, Guodong and Wan, Chen and Zhang, Lei},
  journal={arXiv preprint arXiv:2602.12637},
  year={2026}
}

@book{shahidi2011certain,
  title={On Certain $ L $-Functions: Conference on Certain L-functions in Honor of Freydoon Shahidi, July 23-27, 2007, Purdue University, West Lafayette, Indiana},
  author={Shahidi, Freydoon},
  volume={13},
  year={2011},
  publisher={American Mathematical Soc.}
}

@article{mao2026relative,
  title={Relative Langlands duality for some strongly tempered spherical varieties},
  author={Mao, Zhengyu and Wan, Chen and Zhang, Lei},
  journal={Inventiones mathematicae},
  volume={243},
  number={3},
  pages={993--1036},
  year={2026},
  publisher={Springer}
}

@article{ben2024relative,
  title={Relative langlands duality},
  author={Ben-Zvi, David and Sakellaridis, Yiannis and Venkatesh, Akshay},
  journal={arXiv preprint arXiv:2409.04677},
  year={2024}
}

@incollection{BF90,
  author    = {Bump, Daniel and Friedberg, Solomon},
  title     = {The exterior square automorphic {$L$}-functions on {$\mathrm{GL}(n)$}},
  booktitle = {Festschrift in honor of I. I. Piatetski-Shapiro on the occasion of his sixtieth birthday, Part II},
  series    = {Israel Math. Conf. Proc.},
  volume    = {3},
  pages     = {47--65},
  publisher = {Weizmann},
  address   = {Jerusalem},
  year      = {1990}
}

%\bibliographystyle{plain}
%\bibliography{references} 

\end{document}